\newtheorem{theorem}{Theorem}[section]
\newtheorem{corollary}[theorem]{Corollary}
\newtheorem{lemma}[theorem]{Lemma}
\newtheorem{proposition}[theorem]{Proposition}
\theoremstyle{definition}
\newtheorem{remark}[theorem]{Remark}
\newtheorem{definition}[theorem]{Definition}
\def\A{\mathcal{A}}
\def\Cap{{\rm Cap}}
\def\wFaQn{\widehat{\mathcal F}_{n,\alpha}}
\def\wFaQ{\widehat{\mathcal F}_{n,\alpha}}
\def\wFQ{\widehat{\mathcal F}_{n,1}}
\def\R{\mathbb R}
\def\FaQ{\mathcal F_{Q,\alpha}}
\def\FzQ{\mathcal{F}_{Q,0}}
\def\G{\mathcal G_\Lambda}
\def\H{\mathcal{H}}
\def\Hu{\H^1}
\def\LM#1{\hbox{\vrule width.2pt \vbox to#1pt{\vfill \hrule width#1pt
height.2pt}}}
\def\LL{{\mathchoice {\>\LM7\>}{\>\LM7\>}{\,\LM5\,}{\,\LM{3.35}\,}}}
\def\restr{{\LL}}
\def\I{\mathcal{I}}
\def\Iz{\I_0}
\def\Ia{\I_\alpha}
\def\les{\lesssim}
\def\ges{\gtrsim}
\def\eps{\varepsilon}
\def\xu{x_1^\eps}
\def\xd{x_2^\eps}
\def\lt{\left}
\def\rt{\right}
\def\vphi{\varphi}
\def\diam{\textup{diam}}
\def\N{\mathbb{N}}
\author{Michael Goldman}
\address{LJLL, Universit\'e Paris Diderot, CNRS, UMR 7598, Paris, France}
\email{goldman@math.univ-paris-diderot.fr}
\author{Matteo Novaga}
\address{Department of Mathematics, University of Pisa, 56100 Pisa, Italy} 
\email{matteo.novaga@unipi.it} 
\author{Berardo Ruffini}
\address{Institut Montpelli\'erain Alexander Grothendieck, University of Montpellier, 34095 Montpellier Cedex 5, France}
\email{berardo.ruffini@umontpellier.fr}
\numberwithin{equation}{section}
\title[A long-range-isoperimetric problem
  under  under a convexity constraint]{On minimizers of an 
	isoperimetric problem
	with long-range interactions under a convexity constraint}
\begin{document}

\begin{abstract}
We study a variational problem 
modeling the behavior at equilibrium of charged liquid drops under convexity constraint. After proving well-posedness of the model, 
we show $C^{1,1}$-regularity of minimizers for the Coulombic interaction in dimension two. As a by-product we obtain that balls are the unique minimizers for small charge. 
Eventually, we study the asymptotic behavior of minimizers, as the charge goes to infinity. 
\end{abstract}

\maketitle


\section{Introduction}\label{section1}

In this paper we are interested in the existence and regularity of minimizers 
of the following problem:
\begin{equation}\label{problema}
\min\left\{\mathcal F_{Q,\alpha}(E)
:\ E\subset\R^N\,\text{convex body},\ |E|=V\right\}
\end{equation}
where, for $E\subset\R^N$, $V, Q>0$ and $\alpha\in [0,N)$, we have set
\begin{equation}\label{Falpha}
\mathcal F_{Q,\alpha}(E):= P(E)+Q^2\mathcal I_\alpha (E).
\end{equation}
Here $P(E):=\mathcal{H}^{N-1}(\partial E)$ stands for the perimeter of $E$ and, letting $\mathcal{P}(E)$ be the set of probability measures supported on the closure of $E$, we set for $\alpha\in (0,N)$,
\begin{equation}\label{Ialpha}
\I_\alpha(E):=\inf_{\mu\in \mathcal{P}(E)}
\int_{E\times E}\frac{d\mu(x)\,d\mu(y)}{|x-y|^\alpha},
\end{equation}
and for $\alpha=0$,
\begin{equation}\label{Izero}
\I_0(E):=\inf_{\mu\in \mathcal{P}(E)}
\int_{E\times E}\log\left(\frac{1}{|x-y|}\right)d\mu(x)\,d\mu(y). 
\end{equation}
Notice that, up to rescaling, we can assume, as we shall do for the rest of the paper, that $V=1$.

Starting from the seminal work of Lord Rayleigh \cite{Ray} (in the Coulombic case $N=3$, $\alpha=1$), the functional \eqref{Falpha} has been extensively studied in the physical literature to model the shape
of charged liquid drops (see \cite{gnrI} and the references therein). In particular, it is known that the ball is a {\em linearly} stable critical point 
for \eqref{problema} if the charge $Q$ is not too large (see for instance \cite{FonFri}).
However, quite surprisingly, the authors showed in \cite{gnrI} that, without the convexity constraint, \eqref{Falpha} never admits minimizers 
under volume constraint for any $Q>0$ and $\alpha<N-1$. In particular, this implies that in this model 
a charged drop is always {\em nonlinearly} unstable. This result is 
in sharp contrast with experiments (see for instance 
\cite{Zel,Tay}), where there is evidence of stability of the ball for small charges. This suggests
that the energy $\mathcal F_{Q,\alpha}(E)$ does not include
all the physically relevant contributions. 

As shown in \cite{gnrI}, a possible way to gain well-posedness of the problem is requiring some extra regularity of the admissible sets. In this paper, we consider an alternative type of constraint, 
namely the convexity of admissible sets. This assumption seems reasonable as long as the minimizers remain strictly convex, that is for small enough charges.
Let us point out that in \cite{MurNov}, still another regularizing  mechanism is proposed. There, well-posedness is obtained by adding an entropic 
term which prevents charges to concentrate too much on the boundary of $E$. 
We point out that it has been recently shown in  \cite{murnovruf} that in the borderline case $\alpha=1$, $N=2$ such a regularization is not needed for the model to be well-posed.   
For a more exhaustive discussion about the physical motivations and the literature on related problems we refer to the papers \cite{MurNov,gnrI}. 

Using the compactness properties of convex sets, our first result is the existence of minimizers for every charge $Q>0$.
\begin{theorem}\label{teoesist}
 For every $\alpha \in [0,N)$ and every $Q$, \eqref{problema} admits a minimizer.
\end{theorem}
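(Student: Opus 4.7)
The plan is to apply the direct method of the calculus of variations, exploiting Blaschke's selection theorem for compactness of convex sets. Fix a minimizing sequence $(E_n)$ of convex bodies with $|E_n|=1$; since the unit ball has finite energy, $\FaQ(E_n)\le C$. The first task is a uniform perimeter bound. For $\alpha\in(0,N)$ the interaction $\Ia$ is nonnegative, so $P(E_n)\le C$ directly. For $\alpha=0$ the kernel changes sign, but the elementary estimate $\Iz(E)\ge -\log\diam(E)$, together with the (John's-ellipsoid-based) fact that unit-volume convex bodies satisfy $\diam(E)\les P(E)^{N-1}$, yields $P(E_n)-cQ^2\log P(E_n)\le C$ and hence $P(E_n)\le C'$. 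For unit-volume convex bodies, bounded perimeter implies bounded diameter, so after translating we may assume $E_n\subset\overline{B_R}$ for some fixed $R$.

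Blaschke's selection theorem then extracts a Hausdorff-convergent subsequence $E_n\to E^*$, with $E^*\subset\overline{B_R}$ convex. If $E^*$ were contained in a hyperplane, Hausdorff convergence would force $|E_n|\to 0$, contradicting $|E_n|=1$; hence $E^*$ is a genuine convex body with $|E^*|=1$. For Hausdorff-converging convex bodies with nonempty interior, perimeter is in fact \emph{continuous}, so $P(E^*)=\lim P(E_n)$.

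It remains to establish lower semicontinuity of the nonlocal term. Choose optimal $\mu_n\in\mathcal{P}(\overline{E_n})$ realizing $\Ia(E_n)$ (existence is standard since the kernel is lsc and the supports are compact). All $\mu_n$ are supported in the fixed ball $\overline{B_R}$, so Prokhorov yields a weak-$*$ limit $\mu$; the eventual inclusion $E_n\subset(E^*)_\eps$ coming from Hausdorff convergence, combined with Portmanteau, forces $\mathrm{supp}\,\mu\subset\overline{E^*}$. The kernel $|x-y|^{-\alpha}$ (respectively $\log(1/|x-y|)$) is lsc on $\R^N\times\R^N$ and bounded below on the compact set $\overline{B_R}\times\overline{B_R}$; its truncations $K\wedge M$ are continuous and bounded, so weak-$*$ convergence of $\mu_n\otimes\mu_n$ plus monotone passage $M\to\infty$ yield $\int K\,d(\mu\otimes\mu)\le\liminf_n\int K\,d(\mu_n\otimes\mu_n)=\liminf_n\Ia(E_n)$. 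Since $\mu$ is admissible in the definition of $\Ia(E^*)$, we obtain $\Ia(E^*)\le\liminf\Ia(E_n)$, whence $\FaQ(E^*)\le\liminf\FaQ(E_n)$ and $E^*$ is the desired minimizer.

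The most delicate point is the $\alpha=0$ case of the a priori perimeter bound: the sign-indefinite kernel could in principle allow a minimizing sequence to run off to arbitrarily large diameter, and it is the polynomial diameter-to-perimeter control derived from John's ellipsoid that tames the logarithm. The lower semicontinuity step, though essentially classical, also requires care in locating the support of the weak-$*$ limit measure inside $\overline{E^*}$; this relies on the one-sided inclusion $E_n\subset (E^*)_\eps$ rather than merely on the uniform boundedness of the supports.
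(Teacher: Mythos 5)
Your proof is correct and follows essentially the same path as the paper's: a priori diameter/perimeter bounds via John's ellipsoid (with a separate argument taming the sign-indefinite logarithmic kernel when $\alpha=0$), Blaschke compactness in the Hausdorff metric, and lower semicontinuity of both terms. The only cosmetic differences are that you prove continuity (rather than mere lower semicontinuity) of the perimeter for nondegenerate convex bodies and spell out the truncation/Portmanteau argument for the lower semicontinuity of $\Ia$, which the paper instead delegates to the references \cite{landkof,saftot} and \cite[Proposition~2.2]{gnrI}.
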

We then  study the regularity of minimizers. As often in variational problems with convexity constraints, regularity (or singularity) of minimizers is hard to deal with in dimension larger than two (see \cite{lamboley,lamboley2}). We thus restrict ourselves to $N=2$. Since our analysis strongly
uses the regularity of {\em equilibrium measures} (i.e. the minimizer of \eqref{Ialpha}), we are further reduced to study the case $\alpha=N-2$ (that is $\alpha=0$ in this case). The second main result of the paper is then
\begin{theorem}\label{teoreg}
 Let $N=2$ and $\alpha=0$, then for every $Q>0$, the minimizers of \eqref{problema} are of class $C^{1,1}$.
\end{theorem}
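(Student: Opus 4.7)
The plan is to argue in two successive steps: first show that $\partial E$ has no corners, so that $E$ is $C^1$, and then upgrade to bounded curvature.

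\textbf{Step 1 (no corners).} I would rule out corners via a local rounding competitor. Suppose for contradiction that $p\in\partial E$ is a corner, i.e.\ the outer normal cone at $p$ has positive opening $\pi-\theta$ with $\theta\in(0,\pi)$. Define $E_\eps$ to be the convex hull of $E$ together with a disc of radius $\eps$ inscribed in the corner at $p$ and tangent to the two incident boundary segments. Then $E_\eps$ is convex, $E\subset E_\eps$, and a direct planar computation (lengths of the two cut-off segments vs.\ the replacing circular arc) yields $P(E_\eps)=P(E)-c_1(\theta)\,\eps+O(\eps^2)$ and $|E_\eps|=1+O(\eps^2)$ with $c_1(\theta)>0$. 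Rescale $\tilde E_\eps=\lambda E_\eps$ with $\lambda=|E_\eps|^{-1/2}=1-O(\eps^2)$ to restore the unit-area constraint. Since $\Iz$ is monotone nonincreasing under inclusion and scales as $\Iz(\lambda F)=\Iz(F)-\log\lambda$, we deduce $\Iz(\tilde E_\eps)\le\Iz(E)+O(\eps^2)$. Therefore $\FzQ(\tilde E_\eps)\le\FzQ(E)-c_1(\theta)\,\eps+O(\eps^2)<\FzQ(E)$ for $\eps$ small, contradicting minimality. Hence $E\in C^1$.

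\textbf{Step 2 (bounded curvature).} Having $E\in C^1$, the outer normal is continuous on $\partial E$. Decompose $\partial E=\Sigma_f\cup\Sigma_s$, where $\Sigma_f$ is the union of the relatively open maximal line segments contained in $\partial E$ (the contact set where the convexity constraint is active) and $\Sigma_s=\partial E\setminus\Sigma_f$ is the complementary strictly convex part. Curvature vanishes on $\Sigma_f$. On $\Sigma_s$, convexity-preserving normal perturbations in both directions are admissible, so the first-order optimality condition becomes an equality, yielding an Euler--Lagrange equation of the form $\kappa+Q^2\,G(u)=\lambda$ on $\Sigma_s$, where $u$ is the arc-length density of the equilibrium measure of $\Iz(E)$, $G$ is an explicit (quadratic-type) function coming from the first variation of $\Iz$, and $\lambda$ is the Lagrange multiplier enforcing the area constraint. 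Boundedness of $\kappa$ on $\Sigma_s$ thus reduces to boundedness of $u$.

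The key analytic input is $u\in L^\infty(\partial E)$. The density $u$ of the logarithmic equilibrium measure on a planar body is read off from the derivative of the exterior conformal map $\phi:\{|z|>1\}\to\R^2\setminus\bar E$; by the $C^1$ regularity from Step 1, classical Kellogg-type boundary regularity for conformal maps gives $u\in L^\infty_{\mathrm{loc}}$ in the strict interior of $\Sigma_s$. The delicate issue is boundedness near an endpoint $q$ of a segment in $\Sigma_f$: by Step 1, the flat segment meets the neighboring smooth arc tangentially, and a local comparison with a model half-plane (or a straight wedge whose opening tends to $\pi$) rules out any singularity of $u$ at $q$. Combining these facts gives $\kappa\in L^\infty(\partial E)$, whence $E\in C^{1,1}$.

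The main obstacle, I expect, lies in Step 2, more precisely in the potential-theoretic part: one must turn the $C^1$ information from Step 1 into uniform control of the equilibrium density, especially at the endpoints of flat components of $\partial E$, where the nonlocal character of $\Iz$ couples the local boundary geometry with the global conformal structure of $\R^2\setminus\bar E$.
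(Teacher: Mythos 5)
Your two-step outline is genuinely different from the paper's strategy, but both steps contain real gaps.

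\textbf{Step 1.} The monotonicity argument does not go the way you need. Any convexity-preserving ``rounding'' of a corner of a convex body replaces the corner with a chord or arc and therefore produces a set $E_\eps\subset E$ (the convex hull of $E$ with a disc inscribed in the corner is just $E$; the only nontrivial local modification removes material). For $E_\eps\subset E$, monotonicity gives $\Iz(E_\eps)\ge\Iz(E)$, so you cannot conclude $\Iz(\tilde E_\eps)\le\Iz(E)+O(\eps^2)$ from inclusion alone: you must actually \emph{estimate} how much the logarithmic energy increases when the domain is shrunk near the corner. This is precisely the hard, nonlocal estimate; in the paper it occupies Step~3 of the proof of Theorem~\ref{mainregTh} and only yields $\Delta\Iz\lesssim \eps^{2(p-1)/p}$ (with $p>2$ the integrability exponent of the equilibrium density from Theorem~\ref{estimregmuconv}), which is $o(\eps)$ but not $O(\eps^2)$. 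Your claimed $O(\eps^2)$ is unjustified, and without some quantitative control you cannot beat the $-c_1(\theta)\eps$ perimeter gain.

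\textbf{Step 2.} Even granting $C^1$, the jump to $u\in L^\infty$ is a gap. Kellogg-type results require more than continuity of the tangent (Dini, or $C^{1,\beta}$) to bound the derivative of the exterior conformal map; a $C^1$ domain can have conformal derivative, hence equilibrium density, unbounded (it lies in $L^p$ for all finite $p$ but not $L^\infty$). This is exactly why the paper's Lemma~\ref{regmu} is stated for $C^{1,\beta}$ boundaries and why the argument is a bootstrap: first $L^p$ density $\Rightarrow$ $C^{1,\beta}$ for some small $\beta$ via Lemma~\ref{geomreglemma}, only then $L^\infty$ density via Lemma~\ref{regmu}, and then two more rounds to reach $C^{1,1}$. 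Moreover, invoking an Euler--Lagrange equation $\kappa+Q^2G(u)=\lambda$ on the strictly convex part requires shape-differentiability of $\Iz$ under the convexity constraint; the authors explicitly remark (end of Section~\ref{section1}) that this regularity of $\Iz$ is not established in dimension two, which is exactly why they avoid the variational-inequality route. The paper's competitor argument circumvents both issues: it works directly with one-sided inward cuts $E_\eps=E\cap H_\eps^-$ (no need to round corners from outside, no shape derivative), encodes the regularity gain in the purely geometric Lemma~\ref{geomreglemma} ($\Delta V\lesssim\eps^{2+\beta}\Rightarrow C^{1,\beta}$), and iterates.

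In short: the architecture of your proof (ruling out corners, then an EL equation for curvature) is a legitimate alternative program, but as written both the corner-removal estimate and the $L^\infty$ bound on the density are the hard points, not corollaries of softer facts; the paper's iterative cutting argument is designed precisely to address them.
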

Since we are able to prove uniform $C^{1,1}$ estimates as $Q$ goes to zero, building upon our previous stability results established in \cite{gnrI}, we get
\begin{corollary}\label{corballs}
 If $N=2$ and $\alpha=0$, for $Q$ small enough, the only minimizers of \eqref{problema} are balls.
\end{corollary}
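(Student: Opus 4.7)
The plan is to argue by contradiction, combining the uniform $C^{1,1}$ bounds alluded to just before the statement with a quantitative stability estimate for the ball obtained in \cite{gnrI}. So suppose there exists a sequence $Q_n\to 0$ and minimizers $E_n$ of $\mathcal F_{Q_n,0}$ (with $N=2$, $\alpha=0$, and $|E_n|=1$), each of which is not a ball. After a translation we may assume all $E_n$ are centered at the origin, so our goal is to derive a contradiction.

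The first step is compactness: by Theorem~\ref{teoreg} together with the announced \emph{uniform} $C^{1,1}$ estimates as $Q\to 0$, the sequence $\{E_n\}$ is precompact in the $C^{1,\beta}$ topology for any $\beta\in(0,1)$, and non-degenerate (internal and external ball condition with radii bounded away from $0$). Let $E_\infty$ be a limit point. The second step identifies $E_\infty$: testing the minimality of $E_n$ against the unit ball $B$ gives
\begin{equation*}
P(E_n)\le P(B)+Q_n^2\bigl(\I_0(B)-\I_0(E_n)\bigr).
\end{equation*}
Since $\I_0$ is continuous on convex bodies with uniform non-degeneracy and bounded diameter (both ensured by the $C^{1,1}$ bounds and $|E_n|=1$), the right-hand side tends to $P(B)$, so $P(E_\infty)\le P(B)$. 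Combined with $|E_\infty|=1$, the isoperimetric inequality forces $E_\infty=B$. Thus $E_n\to B$ in $C^{1,\beta}$.

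The third and conclusive step invokes the quantitative stability of $B$ proved in \cite{gnrI}: in a suitable $C^{1,\beta}$-neighborhood of $B$ (which, thanks to the uniform $C^{1,1}$ estimates, contains all $E_n$ for $n$ large), there exist constants $c,\delta>0$ and $Q_0>0$ such that
\begin{equation*}
\mathcal F_{Q,0}(E)-\mathcal F_{Q,0}(B)\ge c\,d(E,B)^2
\end{equation*}
for every admissible convex $E$ with $d(E,B)\le\delta$ and every $Q\le Q_0$, where $d$ denotes an appropriate asymmetry (Hausdorff, or Fraenkel-type). Since $E_n$ is a minimizer, $\mathcal F_{Q_n,0}(E_n)\le\mathcal F_{Q_n,0}(B)$, which combined with the above inequality gives $d(E_n,B)=0$ for all sufficiently large $n$. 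This contradicts the assumption that $E_n$ is never a ball.

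The main obstacle I expect is matching the regularity framework of the stability estimate in \cite{gnrI} with the $C^{1,1}$ regularity produced by Theorem~\ref{teoreg}: the stability inequality is typically proved for normal graphs over $\partial B$ of class $C^{2,\beta}$ (or at least with bounded second fundamental form in an $L^\infty$ sense), so one has to check that the uniform $C^{1,1}$ bound, phrased as a two-sided bound on curvatures in a weak sense for convex sets, is enough to apply (or suitably adapt) the Fuglede-type second-variation argument of \cite{gnrI}. Beyond that, the key quantitative inputs—isoperimetric rigidity and the second-variation computation at the ball—are precisely what was established in our previous work, so the argument reduces to a soft compactness-plus-stability scheme.
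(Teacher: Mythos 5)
Your proposal follows the same broad strategy as the paper---compare with the ball, establish uniform regularity, invoke the quantitative stability analysis from \cite{gnrI}---but the paper's proof is organized rather differently, and the ordering matters.

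In the paper the proof of Theorem~\ref{thm:minimalitapalla} \emph{first} establishes convergence $E_Q\to B$, and only afterwards derives uniform $C^{1,1}$ bounds: by minimality $P(E_Q)-P(B)\le Q^2(\Iz(B)+|\Iz(E_Q)|)$, with $|\Iz(E_Q)|$ bounded via the diameter bound of Lemma~\ref{lemrego2}, and the quantitative isoperimetric inequality then gives $|E_Q\Delta B|^2\les Q^2\to 0$, whence Hausdorff convergence to $B$ by convexity. This ordering is important: Theorem~\ref{mainregTh} produces $C^{1,1}$ bounds depending on $\|f\|_{L^p(\partial E)}$, and the uniform $L^p$ bound on the equilibrium density (Theorem~\ref{estimregmuconv}) is itself formulated for a \emph{convergent} sequence of convex bodies. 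Your plan begins by asserting compactness in $C^{1,\beta}$ from ``the announced uniform $C^{1,1}$ estimates,'' but those estimates are derived in the paper only \emph{after} one knows the $E_Q$ converge to $B$. As written, your first step has a circularity that the paper's sequencing avoids; you would need to first extract Hausdorff convergence from the crude diameter bound and quantitative isoperimetric (as the paper does), then upgrade to $C^{1,1}$.

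The second difference concerns the stability input. You invoke a Fuglede-type coercivity $\mathcal F_{Q,0}(E)-\mathcal F_{Q,0}(B)\ge c\,d(E,B)^2$ in a $C^{1,\beta}$ neighborhood. What \cite[Proposition 6.3]{gnrI} actually supplies (and what the paper uses) is a comparison of the \emph{separate} energy gaps, $\|\mu_Q\|_{L^\infty}^2\,(P(E_Q)-P(B))\ges \Iz(B)-\Iz(E_Q)$, valid near the ball with $\|\mu_Q\|_{L^\infty}$ controlled via Lemma~\ref{regmu} by the $C^{1,\beta}$ norm of $\partial E_Q$. Plugged into the minimality inequality, this yields the self-improving bound $P(E_Q)-P(B)\les Q^2(P(E_Q)-P(B))$, which for small $Q$ forces $P(E_Q)=P(B)$ and hence $E_Q=B$ by isoperimetric rigidity. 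Your coercivity statement is a consequence of that comparison together with the quantitative isoperimetric inequality, so the logic is morally equivalent, but characterizing the cited estimate as a second-variation coercivity of the full functional overstates what is actually in \cite{gnrI}. The concern you raise about matching regularity frameworks is precisely addressed by Lemma~\ref{regmu}: $C^{1,\beta}$ regularity of $\partial E$ with uniform bounds controls $\|\mu\|_{C^{0,\beta}}$, which is the hypothesis needed.
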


The proof of Theorem \ref{teoreg} is based on the natural idea of  comparing the minimizers with a competitor made by ``cutting out the angles''. However, the non-local nature of the problem makes the estimates non-trivial. As already mentioned, a crucial point is an estimate on the integrability of the 
equilibrium measures. This is obtained by drawing a connection with harmonic measures (see Section \ref{secdens}). Let us point out\footnote{this was suggested to us by J. Lamboley} that, up to proving the regularity  of the shape functional $\Iz$ and computing its shape derivative, 
one could have obtained a proof of Theorem \ref{teoreg} by applying the abstract regularity result of \cite{lamboley}. Nevertheless, since our proof has a nice geometrical 
flavor and since regularity of $\Iz$ is not known in dimension two (see for instance \cite{jerisonacta,crasta,novruf} for the proof in higher dimension), we decided to keep it.
\\
We remark that, differently from the two-dimensional case, 
when $N=3$ we expect the onset of singularities 
at a critical value $Q_c>0$, with the shape of a spherical cone with a prescribed angle. Such singularities are also observed in experiments and are usually
called {\it Taylor cones} (see \cite{Tay,Zel}). At the moment
we are not able to show the presence of such singularities in our model,
and this will be the subject of future research.\\

Eventually, in Section \ref{seclim}, we study the behavior of the optimal sets when the charge goes to infinity. Even though this regime is less significant from the point of view of 
the applications, we believe that it is still mathematically interesting. Building on $\Gamma-$convergence results, we prove
\begin{theorem}\label{teoconv}
 Let $\alpha\in [0,1)$ and $N\ge 2$. Then, every minimizer $E_Q$ of \eqref{problema} satisfies (up to a rigid motion)
 \[Q^{-\frac{2N(N-1)}{1+(N-1)\alpha}} E_Q\to [0,L_{N,\alpha}]\times\{0\}^{N-1},\]
 where the convergence is in the Hausdorff topology and where 
 \[L_{N,\alpha}:=\lt(\frac{\alpha (N-1) \Ia([0,1])}{N^{(N-2)/(N-1)} \omega_{N-1}^{1/(N-1)}}\rt)^{\frac{(N-1)}{1+\alpha(N-1)}} \textrm{ for } \alpha \in(0,1) \qquad \textrm{and} \qquad L_{N,0}:=\frac{(N-1)^{N-1}}{\omega_{N-1} N^{N-2}},\]
  $\omega_{N}$ being the volume of  the unit ball in $\R^N$. 
 For $\alpha=1$ and $N=2,3$, we have
 \[Q^{-\frac{2(N-1)}{N}}(\log Q)^{-1+1/N}E_Q\to [0,L_{N,1}]\times \{0\}^{N-1},\]
 where
 \[L_{N,1}:=\lt(\frac{4(N-1)}{N^{(N-2)/(N-1)} \omega_{N-1}^{1/(N-1)}}\rt)^{(N-1)/N}.\]
\end{theorem}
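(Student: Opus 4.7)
The plan is to prove Theorem~\ref{teoconv} via a $\Gamma$-convergence / compactness scheme under the rescaling announced in the statement. The guiding heuristic is that a minimizer concentrates on a thin needle of length $L$, in which case $P(E_Q) \sim L^{1/(N-1)}$ while $\I_\alpha(E_Q) \sim \I_\alpha([0,L]) = L^{-\alpha}\I_\alpha([0,1])$ for $\alpha \in (0,1)$ (with a logarithm at $\alpha = 1$, explaining the $\log Q$ factor in the second formula). Minimizing the resulting one-variable functional in $L$ fixes the length scale and matches the exponent of $Q$ appearing in $\lambda_Q$.

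For the \emph{upper bound} I would use the double-cone competitor $D_L$ obtained as the convex hull of two $(N-1)$-dimensional cones joined at their common base, with axial length $L$ and base radius chosen so that $|D_L|=1$; this family turns out to saturate the perimeter inequality on cross sections used for the lower bound. A direct computation gives $P(D_L) = \omega_{N-1}^{1/(N-1)} N^{(N-2)/(N-1)} L^{1/(N-1)} + o(L^{1/(N-1)})$, and since $D_L \supset [0,L]\times\{0\}^{N-1}$, monotonicity of $\I_\alpha$ under inclusion yields $\I_\alpha(D_L)\le \I_\alpha([0,L])$. Inserting these bounds into $\mathcal{F}_{Q,\alpha}$ and optimizing in $L$ produces exactly the constant $L_{N,\alpha}$ of the statement.

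For the \emph{lower bound} on an arbitrary convex $E$ of unit volume with diameter $L$ aligned along $x_1$, set $g(t):=\Hn(E\cap\{x_1=t\})^{1/(N-1)}$: by Brunn--Minkowski, $g$ is concave with $g(0)=g(L)=0$ and $\int_0^L g^{N-1}\,dt=1$. Integrating the $(N-1)$-dimensional isoperimetric inequality on cross sections gives
\[
P(E) \ge (N-1)\omega_{N-1}^{1/(N-1)} \int_0^L g(t)^{N-2}\, dt,
\]
and the one-variable minimization of the right-hand side over admissible $g$ is solved by the tent function, reproducing exactly the constant of the previous step. For the nonlocal term, denote by $\pi$ the projection onto the $x_1$-axis and by $d_\perp(E)$ the transversal diameter; pushing any $\mu\in\mathcal{P}(E)$ forward by $\pi$ yields
\[
\I_\alpha(E) \ge \inf_{\nu \in \mathcal{P}([0,L])} \iint \frac{d\nu(s)\,d\nu(t)}{\bigl(|s-t|^2 + d_\perp(E)^2\bigr)^{\alpha/2}},
\]
and the right-hand side converges monotonically to $\I_\alpha([0,L])$ as $d_\perp(E)\to 0$.

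The conclusion follows by a standard Blaschke-selection argument: setting $\hat E_Q:=\lambda_Q^{-1} E_Q$, matching the upper and lower bounds forces $\mathrm{diam}(\hat E_Q)=O(1)$ and $d_\perp(\hat E_Q)\to 0$; Blaschke selection then provides a Hausdorff-convergent subsequence whose limit is convex with vanishing transversal diameter, hence a segment $[0,L_\infty]\times\{0\}^{N-1}$ after a rigid motion, and sharpness of the matching bounds pins down $L_\infty=L_{N,\alpha}$. Uniqueness of the limit upgrades this to convergence of the whole family. The main obstacle I expect is the quantitative control of the smoothed one-dimensional energy above: one must show that $d_\perp(\hat E_Q)$ decays fast enough for the right-hand side to recover $\I_\alpha([0,L_{N,\alpha}])$ in the limit, and that the sharp constant in the tent-function optimization problem is indeed realized. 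The case $\alpha=1$ with $N\in\{2,3\}$ is the most delicate, as $\I_1$ of a segment diverges logarithmically and the extra $\log Q$ factor in $\lambda_Q$ compensates only in low dimension, explaining the dimensional restriction in the last part of the statement.
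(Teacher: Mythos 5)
Your plan reproduces the paper's strategy quite closely for $\alpha\in[0,1)$: the double-cone competitor is the same one used in the paper's $\Gamma$--limsup construction, and the perimeter lower bound via Brunn--Minkowski concavity of the cross-sectional radius, integration of the $(N-1)$-dimensional isoperimetric inequality, and a one-variable minimization is precisely the Schwarz-symmetrization argument in Theorem~\ref{gammaalpha} (the paper symmetrizes first and then slices, which is equivalent). The one genuine variant is your treatment of the nonlocal term: the paper simply invokes continuity of $\I_\alpha$ under Hausdorff convergence (for $\alpha<1$ this is a cited fact, and once compactness is established it makes the liminf trivial for $\I_\alpha$), whereas you propose a self-contained lower bound by pushing the charge forward onto the axis and using a smoothed kernel $(|s-t|^2+d_\perp^2)^{-\alpha/2}$. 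That argument does work and is more elementary, at the cost of needing to check joint continuity in $L$ and $d_\perp$; it buys self-containedness but nothing essential.

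There are, however, two real gaps. First, you assert that the one-variable problem $\min\{\int_0^L g^{N-2}\,:\, \int_0^L g^{N-1}=V,\ g\ \text{concave},\ g\ge0\}$ is solved by the tent function, but this is not obvious and is the content of Lemma~\ref{concavprob} in the paper, whose proof (reduction to monotone $g$, then a first-variation argument comparing $g$ to an affine competitor while respecting the $L^{N-1}$ constraint) is genuinely nontrivial; without it you cannot pin down the sharp constant $C_N$ and hence not $L_{N,\alpha}$. Second, the case $\alpha=1$, $N\in\{2,3\}$ is left as a remark. There the nonlocal energy of the limiting segment diverges, and the proof requires a sharp asymptotic of the form $\I_1([0,1]\times B_\eps)\sim 2|\log\eps|$ (Proposition~\ref{proeps}, proved via a capacity computation) as well as a quantitative transversal width bound $d_\perp(\hat E_Q)\sim Q^{-2}(\log Q)^{-1}$ to make the $\log Q$ factors match; merely noting that the case is ``delicate'' does not close it. You should also be careful that the a priori upper bound on $\mathrm{diam}(\hat E_Q)$, which you take for granted before invoking Blaschke selection, requires the John-ellipsoid estimates of Lemma~\ref{phi} rather than following automatically from ``matching bounds.''
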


\noindent An obvious consequence of this result is that the ball cannot be a minimizer for $Q$ large enough. For a careful analysis of the loss of linear stability of the ball  we refer to \cite{FonFri}.



\medskip

\noindent{\bf Acknowledgements.} The authors wish 
to thank Guido De Philippis, Jimmy Lamboley, Antoine Lemenant  
and Cyrill Muratov for useful discussions 
on the subject of this paper. 
M. Novaga and B. Ruffini were partially  supported  by  the  Italian 
CNR-GNAMPA  and  by  the  University  of  Pisa  via grant PRA-2015-0017.

%
%

\section{Existence of minimizers}\label{secex}

In this section we show that the minimum in \eqref{problema} is achieved. We begin with a simple lemma linking estimates on the energy with estimates on the size of the convex body.

\begin{lemma}\label{phi}
 Let $N\ge 2$, and $\lambda_1,.., \lambda_N>0$.  Letting $E:=\prod_{i=1}^N [0,\lambda_i]$, $V:=|E|$ and $\Phi:=V^{-\frac{N-2}{N-1}}P(E)$, it holds that\footnote{here and in the rest of the paper, we write $f\les g$ if there exists $C>0$ such that $f\le  C g$. If $f\les g$ and $g\les f$, we will simply write $f\sim g$}  
 \begin{equation}\label{estimlambdageom}
  \max_{i} \lambda_i\les \Phi^{N-1} \qquad \textrm{and} \qquad \min_{i} \lambda_i\sim V^{\frac{1}{N-1}}\Phi^{-1},
 \end{equation}
where the involved constants depend only on the dimension. Moreover, letting $i_{\max}$ be such that $\lambda_{i_{\max}}=\max_i \lambda_i$, it holds for $\alpha>0$,
\begin{equation}\label{estimlambdaalpha}
 \lambda_{i_{\max}}\ges \Ia(E)^{-1/\alpha} \qquad \textrm{and} \qquad  \lambda_i \les \Ia(E)^{1/\alpha}\Phi^{N-2} V^{\frac{1}{N-1}} \quad \text{ for } i\neq i_{\max},
\end{equation}
and for $\alpha=0$,
\begin{equation}\label{estimlambdazero}
 \lambda_{i_{\max}}\ges\exp\lt(-\Iz(E)\rt)  \qquad \textrm{and}\qquad \lambda_i\les \exp\lt(\Iz(E)\rt)\Phi^{N-2} V^{\frac{1}{N-1}} \qquad \text{ for } i\neq i_{\max},
\end{equation}
where the constants implicitly appearing in \eqref{estimlambdaalpha} and \eqref{estimlambdazero}  depend only on $N$ and $\alpha$.
\end{lemma}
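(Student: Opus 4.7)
The plan is to reduce everything to two elementary monotonicities of the box: the perimeter is dominated by the smallest side, while the diameter is dominated by the largest. Throughout I would assume, without loss of generality, that $\lambda_1\le\lambda_2\le\dots\le\lambda_N$, so that $\lambda_{\min}=\lambda_1$ and $\lambda_{i_{\max}}=\lambda_N$.

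For \eqref{estimlambdageom}, I would start from the explicit formula $P(E)=2\sum_{i=1}^N V/\lambda_i$. Since the largest term corresponds to the smallest side, $V/\lambda_1\le P(E)/2\le N\,V/\lambda_1$, i.e.\ $P(E)\sim V/\lambda_{\min}$. Recalling that $\Phi=V^{-(N-2)/(N-1)}P(E)$, this rearranges to $\lambda_{\min}\sim V^{1/(N-1)}/\Phi$, which is the second half of \eqref{estimlambdageom}. The first half then follows from the trivial inequality $V=\lambda_{\max}\prod_{j\ne i_{\max}}\lambda_j\ge\lambda_{\max}\lambda_{\min}^{N-1}$, which gives $\lambda_{\max}\le V\lambda_{\min}^{-(N-1)}\les\Phi^{N-1}$ after substitution.

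For \eqref{estimlambdaalpha} the common ingredient is the bound $\diam(E)\le\sqrt N\,\lambda_{\max}$. Since $|x-y|^{-\alpha}\ge(\sqrt N\,\lambda_{\max})^{-\alpha}$ for every $x,y\in E$, integrating against any $\mu\in\mathcal P(E)$ gives $\Ia(E)\ges\lambda_{\max}^{-\alpha}$, yielding the first inequality. For the second, I would refine the volume decomposition: for $i\ne i_{\max}$ the factor $\lambda_{i_{\max}}$ appears separately among the $\lambda_j$ with $j\ne i$, so $V\ge\lambda_i\,\lambda_{\max}\,\lambda_{\min}^{N-2}$, hence
\[\lambda_i\le\frac{V}{\lambda_{\max}\lambda_{\min}^{N-2}}\les V\cdot\Ia(E)^{1/\alpha}\cdot\frac{\Phi^{N-2}}{V^{(N-2)/(N-1)}}=\Ia(E)^{1/\alpha}\Phi^{N-2}V^{1/(N-1)},\]
after plugging in the two bounds already obtained.

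The case $\alpha=0$ in \eqref{estimlambdazero} is entirely analogous once one observes the scaling identity $\Iz([0,L])=\Iz([0,1])-\log L$. The analogue of the diameter bound is $\Iz(E)\ge-\log\diam(E)\ge-\log(\sqrt N\,\lambda_{\max})$, which gives $\lambda_{\max}\ges\exp(-\Iz(E))$; plugging this into the same volume decomposition produces the claimed upper bound on $\lambda_i$. There is no real obstacle here: once one identifies the two correct monotonicities, the rest is algebraic book-keeping, and the dependence of the implicit constants on $N$ and $\alpha$ is automatic.
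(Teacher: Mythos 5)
Your proof is correct and follows essentially the same route as the paper's: both arguments reduce to the elementary observations that $P(E)\sim V/\lambda_{\min}$ and that $\diam(E)\sim\lambda_{\max}$ controls $\I_\alpha(E)$ from below, then close the argument by a volume decomposition isolating the relevant side lengths. Your version is, if anything, slightly cleaner in that you spell out the two-sided bound $P(E)\sim V/\lambda_{\min}$ (the paper states only one direction, though it uses both) and you bound a generic $\lambda_i$ with $i\neq i_{\max}$ directly via $V\ge\lambda_i\lambda_{\max}\lambda_{\min}^{N-2}$, whereas the paper runs a chain of inequalities in $\Phi$ to bound the second-largest side and then invokes monotonicity; the two are algebraically equivalent.
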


\begin{proof}
Without loss of generality, we can assume that $\lambda_1\ge \lambda_2\ge \cdots \ge \lambda_N$. Then, since $V=\prod_{i=1}^N \lambda_i$ and $P(E)\lesssim \prod_{i=1}^{N-1} \lambda_i$, taking the ratio of these two quantities,
 we obtain $\lambda_N\ges V P(E)^{-1}=V^{\frac{1}{N-1}}\Phi^{-1}$. Now, since the $\lambda_i$ are decreasing (in particular $\lambda_i\ge \lambda_N$ for all $i$), this implies
 \[
 \Phi\ges V^{-\frac{N-2}{N-1}}\prod_{i=1}^{N-1} \lambda_i= V^{-\frac{N-2}{N-1}} \lambda_1\prod_{i=2}^{N-1} \lambda_i\ges   V^{-\frac{N-2}{N-1}} \lambda_1 V^{\frac{N-2}{N-1}}\Phi^{-(N-2)},
 \]
yielding \eqref{estimlambdageom}.\\
Assume now that $\alpha>0$. Then, from $\diam(E)\sim \lambda_1$, we get $\Ia(E)\ges \lambda_1^{-\alpha}$. If $N=2$, together with $\lambda_1\lambda_2=V$, this implies \eqref{estimlambdaalpha}. If $N\ge 3$, 
we infer as above that 
\[\Phi\ges V^{-\frac{N-2}{N-1}} \lambda_1 \lambda_2 \prod_{i=3}^{N-1}\lambda_i\ges V^{-\frac{N-2}{N-1}}\Ia(E)^{-\frac1\alpha} \lambda_2 V^{\frac{N-3}{N-1}}\Phi^{-(N-3)}\ges V^{-\frac{1}{N-1}}\Phi^{-(N-3)}\Ia(E)^{-\frac1\alpha} \lambda_2. \]
 This gives \eqref{estimlambdaalpha}. The case $\alpha=0$ follows analogously, using 
 the fact that $\Iz(E)\ge C-\log \lambda_1$.
 \end{proof}

The next result follows directly from John's lemma  \cite{john}.

\begin{lemma}\label{john}
There exists a dimensional constant $C_N>0$ such that for every convex body $E\subset \R^N$, up to a rotation and a translation, there exists $\mathcal{R}:=\prod_{i=1}^N [0,\lambda_i]$, 
 such that 
\[
\mathcal R\subseteq E\subseteq C_N \mathcal R.
\]
As a consequence $\diam(E)\sim \diam(\mathcal{R})$, $|E|\sim |\mathcal{R}|$, $P(E)\sim P(\mathcal{R})$ and $\Ia(E)\sim \Ia(\mathcal{R})$ for $\alpha>0$ (and $\exp(-\Iz(E))\sim\exp(-\Iz(\mathcal{R}))$).
\end{lemma}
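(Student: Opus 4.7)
The plan is to invoke John's theorem on the maximal-volume inscribed ellipsoid: for every convex body $E\subset\R^N$ there exists a unique ellipsoid $\mathcal{E}$ of largest volume satisfying $\mathcal{E}\subseteq E$, and after translating so that $\mathcal{E}$ is centered at the origin one has $\mathcal{E}\subseteq E\subseteq N\mathcal{E}$ (with dilation centered at the origin). Up to a rotation, I can assume that the principal axes of $\mathcal{E}$ are aligned with the coordinate axes, so that $\mathcal{E}=\{x:\sum_i x_i^2/a_i^2\le 1\}$. A short Lagrange-multiplier computation shows that the largest axis-parallel rectangle inscribed in $\mathcal{E}$ is $\prod_i[-a_i/\sqrt{N},a_i/\sqrt{N}]$ (its vertices satisfy $\sum_i b_i^2/a_i^2=1$), while the smallest axis-parallel rectangle containing $\mathcal{E}$ is $\prod_i[-a_i,a_i]$. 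Translating once more so that the inscribed rectangle becomes $\mathcal{R}=\prod_i[0,\lambda_i]$ with $\lambda_i=2a_i/\sqrt{N}$, I obtain the chain
\[
\mathcal{R}\subseteq\mathcal{E}\subseteq E\subseteq N\mathcal{E}\subseteq N^{3/2}\mathcal{R},
\]
which proves the first assertion with $C_N=N^{3/2}$.

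For the consequences, the equivalences $\diam(E)\sim\diam(\mathcal{R})$ and $|E|\sim|\mathcal{R}|$ are immediate from $\mathcal{R}\subseteq E\subseteq C_N\mathcal{R}$. The perimeter comparison uses the classical monotonicity $A\subseteq B$ convex $\Rightarrow P(A)\le P(B)$, applied to $\mathcal{R}\subseteq E$ and to $E\subseteq C_N\mathcal{R}$; this yields $P(\mathcal{R})\le P(E)\le C_N^{N-1}P(\mathcal{R})$.

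For the nonlocal energies I rely on two ingredients. First, inclusion monotonicity: if $A\subseteq B$ then $\mathcal{P}(A)\subseteq\mathcal{P}(B)$, and hence $\Ia(A)\ge\Ia(B)$ (and similarly for $\Iz$), since one is minimizing over a smaller class. Second, the scaling identities $\Ia(\lambda F)=\lambda^{-\alpha}\Ia(F)$ for $\alpha\in(0,N)$ and $\Iz(\lambda F)=\Iz(F)-\log\lambda$, obtained by pushing forward the competitor measure under the map $x\mapsto\lambda x$. Chaining these two facts with $\mathcal{R}\subseteq E\subseteq C_N\mathcal{R}$ produces $\Ia(\mathcal{R})\ge\Ia(E)\ge C_N^{-\alpha}\Ia(\mathcal{R})$ for $\alpha>0$, and $\Iz(\mathcal{R})\ge\Iz(E)\ge\Iz(\mathcal{R})-\log C_N$ for $\alpha=0$; exponentiating the latter inequality gives the desired equivalence of $\exp(-\Iz(\cdot))$. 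The only non-routine step is the geometric positioning of $\mathcal{R}$ relative to John's ellipsoid; everything else reduces to set inclusion plus homogeneity, so no real obstacle is expected.
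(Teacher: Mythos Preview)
Your proof is correct and follows exactly the route the paper indicates: the paper's entire argument is the one line ``follows directly from John's lemma,'' and you have supplied the standard details (inscribing/circumscribing axis-parallel boxes in the John ellipsoid, then using inclusion-monotonicity of perimeter and of $\Ia$, $\Iz$ together with their homogeneity for the four comparisons). The only cosmetic point is that after your final translation the dilation $N^{3/2}\mathcal{R}$ must be read as concentric with $\mathcal{R}$ rather than as $\{N^{3/2}x:x\in\mathcal{R}\}$ from the corner at the origin---but this is also how the paper's $C_N\mathcal{R}$ has to be interpreted (a ball shows the corner-based reading is impossible for $N\ge 2$), and it has no effect on the stated consequences.
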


With these two preliminary results at hand, we can prove existence of minimizers for \eqref{problema}.
\begin{theorem}\label{esistenza}
For every $Q>0$ and $\alpha\in[0,N)$,  \eqref{problema} has a minimizer.
\end{theorem}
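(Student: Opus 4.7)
The plan is to apply the direct method of the calculus of variations. Pick a minimizing sequence $\{E_n\}$ of convex bodies with $|E_n|=1$ and $\mathcal{F}_{Q,\alpha}(E_n)\le M$ for some $M>0$. The first task is to show that, up to translation (and rotation), this sequence stays in a fixed compact region of $\R^N$ and does not collapse to a lower-dimensional set. By Lemma \ref{john}, there exists a coordinate box $\mathcal{R}_n=\prod_{i=1}^N[0,\lambda_i^n]$ (in a suitable frame) with $\mathcal{R}_n\subseteq E_n\subseteq C_N \mathcal{R}_n$, and the relevant geometric and energetic quantities for $E_n$ and $\mathcal{R}_n$ are comparable. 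The energy bound yields in particular $P(\mathcal{R}_n)\lesssim M$, hence $\Phi_n:=V^{-(N-2)/(N-1)}P(\mathcal{R}_n)$ is bounded; Lemma \ref{phi} then gives $\max_i\lambda_i^n\lesssim 1$ and $\min_i\lambda_i^n\gtrsim 1$, so that (after translation) $E_n$ is contained in a fixed ball $B_R$ and contains a box of uniform size.

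Next, I would apply the Blaschke selection theorem to extract a subsequence (not relabeled) converging in the Hausdorff distance to a convex body $E\subset B_R$. Since the sets have a uniform interior ball, the limit $E$ has non-empty interior, and Hausdorff convergence of uniformly bounded convex bodies with non-degenerating interior implies $|E_n|\to|E|$, so $|E|=1$. Moreover, for convex bodies, the perimeter is continuous under Hausdorff convergence, so $P(E)=\lim P(E_n)$, and in particular $P(E)\le \liminf P(E_n)$.

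The main remaining point is lower semicontinuity of $\I_\alpha$. For each $n$, pick an equilibrium measure $\mu_n\in \mathcal{P}(E_n)$ realizing the infimum in \eqref{Ialpha} (or \eqref{Izero}). Since all $\mu_n$ are probability measures supported in the fixed compact set $\overline{B_R}$, by Prokhorov we may extract a subsequence with $\mu_n\stackrel{*}{\rightharpoonup}\mu$. Hausdorff convergence $E_n\to E$ forces $\mathrm{supp}\,\mu\subseteq \overline{E}$, so $\mu\in \mathcal{P}(E)$. The kernel $K(x,y)=|x-y|^{-\alpha}$ (resp.\ $\log(1/|x-y|)$ for $\alpha=0$) is lower semicontinuous on $\overline{B_R}\times\overline{B_R}$ and bounded below there (by $0$ or by $\log(1/2R)$ respectively), so the standard Portmanteau-type argument gives
\[
\I_\alpha(E)\le \int\!\!\int K(x,y)\,d\mu(x)\,d\mu(y)\le \liminf_{n\to\infty}\int\!\!\int K(x,y)\,d\mu_n(x)\,d\mu_n(y)=\liminf_{n\to\infty}\I_\alpha(E_n).
\]

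Combining the three estimates yields $\mathcal{F}_{Q,\alpha}(E)\le \liminf_n \mathcal{F}_{Q,\alpha}(E_n)=\inf\mathcal{F}_{Q,\alpha}$, so $E$ is a minimizer. The only slightly subtle step is the lower semicontinuity of $\I_\alpha$, and in particular the need to handle the $\alpha=0$ case where the logarithmic kernel is not non-negative; but confinement in a fixed bounded set cures this by producing a uniform lower bound on the kernel, after which the lsc argument is routine. The rest is a rather direct application of John's lemma together with Lemma \ref{phi} to secure compactness of the minimizing sequence.
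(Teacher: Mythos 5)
Your overall structure — John's lemma for compactness, Blaschke selection, lower semicontinuity of $P$ and $\I_\alpha$ — matches the paper, and your direct Prokhorov-plus-Portmanteau argument for the lower semicontinuity of $\I_\alpha$ is a nice self-contained alternative to the paper's citation of \cite{landkof,saftot} and \cite[Proposition 2.2]{gnrI}. However, there is a genuine gap in the compactness step for $\alpha=0$.

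You assert that ``the energy bound yields in particular $P(\mathcal{R}_n)\lesssim M$''. This is immediate when $\alpha>0$, because then $\I_\alpha(E_n)\ge 0$ and so $P(E_n)\le \mathcal F_{Q,\alpha}(E_n)\le M$. But for $\alpha=0$ the kernel $\log(1/|x-y|)$ is not bounded below independently of the set: one only has $\I_0(E)\ge -\log\diam(E)$, and $\I_0$ does become arbitrarily negative as the diameter grows (e.g.\ $\I_0$ of a segment of length $L$ behaves like $-\log L$). Therefore the inequality $P(E_n)+Q^2\I_0(E_n)\le M$ does not, by itself, bound $P(E_n)$: one must first control the diameter, which is precisely what you were trying to deduce from a bound on $P$. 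This circularity is the reason the paper's proof treats $\alpha=0$ separately. There, one uses the other half of Lemma \ref{phi} (estimate \eqref{estimlambdazero}) to obtain $P(\mathcal{R}_n)\gtrsim \exp\lt(-\tfrac{\Iz(\mathcal{R}_n)}{N-1}\rt)$, so that $\FzQ(\mathcal{R}_n)\gtrsim \exp\lt(-\tfrac{\Iz(\mathcal{R}_n)}{N-1}\rt)+Q^2\Iz(\mathcal{R}_n)$; since the function $t\mapsto e^{-t/(N-1)}+Q^2 t$ tends to $+\infty$ as $t\to\pm\infty$, the energy bound forces $\Iz(\mathcal{R}_n)$ to be bounded (in both directions), hence $P(\mathcal{R}_n)$ is bounded and the diameter estimate follows. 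You do acknowledge that $\alpha=0$ is delicate, but only in connection with the lower semicontinuity step (where the confinement to $\overline{B_R}$ indeed cures it); the point you miss is that the same issue already bites in establishing confinement in the first place. The rest of your argument — uniform interior ball, volume convergence, lower semicontinuity — goes through once this is repaired.
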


\begin{proof}
 Let $E_n$  be a minimizing sequence and let us prove that $\diam(E_n)$ is uniformly bounded. Let $\mathcal{R}_n$ be the parallelepipeds given by Lemma \ref{john}. Since $\diam(E_n)\sim \diam(\mathcal{R}_n)$, it is enough to estimate $\diam(\mathcal{R}_n)$ from above.
 Let us begin with the  case $\alpha>0$. In this case, since $\Ia(\mathcal{R}_n)\ge 0$, by \eqref{estimlambdageom}, applied with $V=1$, we get
 \[\diam(\mathcal{R}_n)\les P(\mathcal{R}_n)^{N-1}\les \FaQ(E_n)^{N-1}.\]
In the case $\alpha=0$, from \eqref{estimlambdageom} and \eqref{estimlambdazero} applied to $V=1$, we get 
\[P(\mathcal{R}_n)\ges \exp\lt(-\frac{\Iz(\mathcal{R}_n)}{N-1}\rt)\]
so that 
\[\FzQ(\mathcal{R}_n)\ges \exp\lt(-\frac{\Iz(\mathcal{R}_n)}{N-1}\rt) + Q^2\,\Iz(\mathcal{R}_n),\]
from which we obtain that $|\Iz(\mathcal{R}_n)|$ is bounded and thus also $P(\mathcal{R}_n)$ is bounded, whence, arguing as above, we obtain a uniform bound on $\diam(\mathcal{R}_n)$. \\
Since the $E_n $'s are convex sets, up to a translation, we can extract a subsequence which converges in the  Hausdorff (and $L^1$) topology to some convex body $E$ of volume one.
Since the perimeter functional is lower semicontinuous with respect to the $L^1$ convergence, and the Riesz potential $\mathcal I_\alpha$ is lower semicontinuous with respect to the Hausdorff convergence (see \cite{landkof,saftot} and 
\cite[Proposition 2.2]{gnrI}),
we get that $E$ is a minimizer of \eqref{problema}.
\end{proof}

\section{Regularity of the planar charge distribution for the logarithmic potential}\label{secdens}
In this section we focus on the case $N=2$ and $\alpha=0$. Relying on classical results on harmonic measures, we show that for every convex set $E$, the corresponding optimal measure $\mu$ for $\Iz(E)$ is absolutely continuous with respect to $\Hu\restr \partial E$ with $L^p$ estimates.
Upon making that connection between $\mu$ and  harmonic measures, this fact is fairly classical. However, since we could not find a proper reference, we  recall (and slightly adapt) a
few useful results. Let us point out that most definitions and results of this section extend to the case $N\ge 3$ and $\alpha=N-2$, and to more general classes of sets.  In particular, for bounded Lipschitz sets, the fact that harmonic measures are absolutely continuous with respect to the surface 
measure with $L^p$ densities for $p>2$ was established in \cite{Dahl}, 
and extended later to more general domains   (see for instance \cite{KenTorfree,KenTor,JerKen}). The interest for harmonic measures stems from the fact that they bear a lot of geometric information  (see in particular \cite{AltCaff,KenTorfree}).
The main result of this section is the following.

\begin{theorem}\label{estimregmuconv}
 Let $E_n$ be a sequence of compact convex bodies converging to a convex body $E$ and let $\mu_n$ be the associated equilibrium measures. Then,  $\mu_n=f_n \Hu\LL \partial E_n$ and there exists $p>2$ and $M>0$
 (depending only on $E$) such that $f_n\in L^p(\partial E_n)$ with 
 \[\|f_n\|_{L^p(\partial E_n)}\le M.\]
 Moreover, if $E$ is smooth, then $p$ can be taken arbitrarily large.
\end{theorem}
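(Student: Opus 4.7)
The plan is to reduce Theorem~\ref{estimregmuconv} to Dahlberg's classical $L^p$ estimate for Poisson kernels on Lipschitz domains, which the paragraph preceding the statement already identifies as the relevant tool. The bridge is the well-known identification of the logarithmic equilibrium measure of a planar compact set of positive capacity with a harmonic measure of its complement.

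First, for a single convex body $K\subset\R^2$ of positive capacity, I would make this identification explicit. Let $g_K$ be the Green function of $\Omega:=\R^2\setminus K$ with pole at infinity, so that $g_K$ is harmonic on $\Omega$, vanishes on $\partial K$, and satisfies $g_K(x)=\log|x|+O(1)$ as $|x|\to\infty$. Computing in a large ball $B_R\supset K$ via Green's identity and letting $R\to\infty$ yields, distributionally, $\Delta g_K=2\pi(\delta_\infty-\nu_K)$, where $\nu_K:=(2\pi)^{-1}\,\partial_\nu g_K\,\Hu\LL\partial K$ (the outer normal derivative being defined $\Hu$-a.e.\ by Lipschitz regularity). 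Testing against the constant $1$ shows $\nu_K(\partial K)=1$, and a direct computation of the logarithmic potential of $\nu_K$ using $g_K$ shows that $U^{\nu_K}$ is constant on $\bar K$. By uniqueness of the equilibrium measure, $\nu_K=\mu$, which is simultaneously the harmonic measure $\omega^\infty_K$ of $\Omega$ at infinity.

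Second, I would apply Dahlberg's theorem \cite{Dahl} quantitatively. In order to use it in its original bounded-domain setting, I would perform a Kelvin inversion centered at a point $z_0$ interior to all $E_n$ for large $n$, turning $\R^2\setminus E_n$ into a bounded Lipschitz domain $\widetilde\Omega_n$ and sending $\infty$ to $z_0$. Conformal invariance of harmonic measure in the plane transports $\omega^\infty_{E_n}$ into the harmonic measure of $\widetilde\Omega_n$ at $z_0$; Dahlberg's theorem together with the self-improving property of reverse H\"older inequalities then yields a density in $L^p(\partial\widetilde\Omega_n)$ for some $p>2$, with both $p$ and the $L^p$ norm depending only on the Lipschitz character of $\widetilde\Omega_n$. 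Since the Jacobian of the Kelvin transform is smooth and uniformly bounded above and below on a neighbourhood of $\partial E_n$ not depending on $n$, pulling back gives $f_n\in L^p(\partial E_n)$ with the same quantitative bound.

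The crucial point is then the uniformity in $n$. Because $E$ is a convex body, there exist $r<R$ and $z_0\in\R^2$ with $B_r(z_0)\subset E\subset B_R(z_0)$, and by Hausdorff convergence the same sandwich persists for all $E_n$ with $n$ large (with slightly worse $r,R$). Writing $\partial E_n$ as a radial graph from $z_0$ produces a Lipschitz parametrization whose constant depends only on $R/r$, so the Lipschitz character of $\widetilde\Omega_n$ is uniform, and consequently so are the exponent $p>2$ and the bound $M$ produced by Dahlberg. If $E$ is smooth, convexity plus Hausdorff convergence upgrades to $C^1$ convergence of $\partial E_n$ to $\partial E$ (supporting half-planes of the $E_n$ converge to those of $E$), so the local Lipschitz constants of $\partial E_n$ tend to zero and Dahlberg's exponent can be taken arbitrarily large. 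I expect the main obstacle to be this quantitative uniformity---checking that every constant in Dahlberg's estimate depends only on $R/r$ and that the Kelvin reduction is controlled independently of $n$---while the existence of an $L^p$ density for a single fixed Lipschitz $E_n$ is essentially standard.
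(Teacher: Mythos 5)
Your proposal takes a genuinely different route from the paper's. Both approaches start from the same identification of the logarithmic equilibrium measure of $E$ with the harmonic measure $\omega^\infty_{E^c}$ at infinity (the paper carries this out explicitly just before the theorem statement, using the potential $v=-\int\log|x-y|\,d\mu$), but from there they diverge. The paper stays entirely inside two-dimensional complex analysis: it writes $\mu_n=\frac{|\psi_n'|}{2\pi}\Hu\LL\partial E_n$ for a conformal map $\psi_n:E_n^c\to B_1$ with $\psi_n(\infty)=0$, and the whole weight of the proof is carried by Lemma~\ref{conformalregularity}, a quantitative adaptation of Warschawski--Schober which yields $\int_{\partial E_n}|\psi_n'|^p\le C(p,\partial E)$ for every $p<p_c(E)=\frac{\pi}{\pi-\overline\zeta}+1$, where $\overline\zeta$ is the smallest interior angle of $\partial E$. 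You instead invert in a small disk $B_r(z_0)\subset E$, reduce to a bounded Lipschitz domain $\widetilde\Omega_n$, and invoke Dahlberg's $B_2$ reverse-H\"older estimate together with Gehring self-improvement. Both routes are sound, and the paper itself acknowledges Dahlberg's result as the natural reference while choosing not to use it.

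What the paper's approach buys over yours is the sharp exponent: Lemma~\ref{conformalregularity} proves the bound for every $p<p_c(E)$, which is what the remark following the theorem is about. Your argument, as written, controls the Lipschitz character of $\widetilde\Omega_n$ by the global $R/r$ ratio from the sandwich $B_r\subset E_n\subset B_R$; Dahlberg's exponent then depends on that global Lipschitz constant, which can be much worse than the constant governed by the minimal angle of $E$ (think of a long thin convex body with all angles close to $\pi$: $R/r$ is huge, but $p_c$ is also huge, and your argument would only give a $p$ close to $2$). To recover the sharp $p_c$ you would have to localize Dahlberg's estimate near the worst corner of $E$ rather than using the global graph constant. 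Since the theorem only asserts the existence of some $p>2$, this is not an error, only a loss of precision that matters for the remark and not for the downstream regularity argument. The only place you should be more careful is the final sentence on the smooth case: the sets $E_n$ can still have corners, so it is not $C^1$ convergence of $\partial E_n$ in any naive sense that you should invoke, but rather the fact that for convex $E_n\to E$ with $E$ of class $C^1$, the minimal interior angle of $\partial E_n$ converges to $\pi$; equivalently, for every $\eta>0$ there is a scale $\rho$ and an $N$ such that for $n\ge N$, every point of $\partial E_n$ has a $\rho$-neighborhood over which $\partial E_n$ is a Lipschitz graph of constant $<\eta$. That is the statement you need to feed into Dahlberg (together with the fact that the $B_2$ constant tends to $1$ and hence the Gehring exponent diverges as the Lipschitz constant tends to zero), and it does hold, but it is worth stating in these terms rather than via a claim of $C^1$ convergence of boundaries.
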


\begin{remark}
	By applying the previous result with $E_n=E$, we get that that the equilibrium measure of a convex set is always in some $L^p(\partial E)$ with $p>2$.
	We stress also that the exponent $p$ and the bound on the $L^p$ norm of its equilibrium measure depend indeed on the set:  for instance, a sequence of convex sets with 
	smooth boundaries converging to a square cannot have equilibrium measures with densities uniformly bounded in $L^p$ for $p>4$.
\end{remark}

We will denote here $\Omega:=E^c$. Let us recall the definition of harmonic measures (see \cite{GarMar,KenTorfree}).
\begin{definition}
 Let $\Omega$ be a Lipschitz open set (bounded or unbounded) such that $\R^2\backslash \partial \Omega$ has two connected components, and let $X\in \Omega$, we denote by $G^X_\Omega$ the Green function of $\Omega$ with pole at $X$ i.e. the unique distributional solution of 
 \[
 -\Delta G_\Omega^X=\delta_X \quad \textrm{ in } \Omega \qquad \textrm{and} \qquad G_\Omega^X =0 \quad \textrm{ on } \partial \Omega,
 \]
 and by $\omega^X_\Omega$ the {\em harmonic measure of $\Omega$ with pole at $X$}, that is the unique (positive) measure such that for every $f\in C^0(\partial \Omega)$, the solution  $u$ of
 \[-\Delta u=0 \quad \textrm{ in } \Omega \qquad \textrm{and} \qquad u =f \quad \textrm{ on } \partial \Omega,\]
 satisfies
 \[u(X)=\int_{\partial \Omega} f(y) d\omega^X_\Omega(y).\]
 If $\Omega$ is unbounded with $\partial \Omega$ bounded and $0\in \overline{\Omega}^c$,  we call $\omega^\infty_\Omega$ the {\em harmonic measure of $\Omega$ with pole at infinity}, that is the unique probability
 measure on $\partial \Omega$ satisfying
 \[\int_{\partial \Omega}\phi d\omega^{\infty} =\int_{\Omega} u \Delta \phi \qquad \forall \phi\in C^{\infty}_c(\R^2)\]
 where $u$ is the solution  of
 \begin{equation}\label{Dirichletinf}\begin{cases}
    -\Delta u=0 & \textrm{in } \Omega\\
    u>0 & \textrm{in } \Omega\\
    u=0 &\textrm{on } \partial \Omega\\
    \lim_{|z|\to +\infty} \lt\{u(z)-\frac{1}{2\pi} \log |z|\rt\} \, \textrm{ exists and is finite}.
   \end{cases}
   \end{equation}
\end{definition}

\noindent
When it is clear from the context, we omit the dependence of $G^X,\omega^X$ or $\omega^\infty$ on the domain $\Omega$.
\begin{remark}
 For smooth domains, it is not hard to check that $\omega^X= \partial_\nu G^X \Hu\LL \partial \Omega$, and that  $\omega^{\infty}=\partial_\nu u \Hu\LL \partial \Omega$ where $\nu$ is the inward unit normal to $\Omega$. Moreover, for $\Omega$ unbounded, 
 if $h^\infty$ is the harmonic function in $\Omega$ with $h^\infty(z)=-\frac{1}{2\pi} \log |z|$ on $\partial \Omega$, then the function $u$ from \eqref{Dirichletinf} can also be defined by $u(z)=\frac{1}{2\pi} \log|z| +h^\infty(z)$.
\end{remark}
 
We may now make the connection between harmonic measures and equilibrium measures. For $E$ a Lipschitz bounded open set containing $0$, let $\mu$ be the optimal measure for $\Iz(E)$ and let 
\[v(x):=\int_{\partial E}  -\log(|x-y|) d\mu(y).\]
Since
\[
-\Delta v=2\pi \mu \textrm{ in } \R^2, \qquad v<\Iz(E) \textrm{ in } E^c \qquad \textrm{and } \qquad  v= \Iz(E) \textrm { on } \partial E, 
\]
 if we let $u:=(2\pi)^{-1}(\Iz(E)-v)$, we see that it satisfies \eqref{Dirichletinf} for $\Omega=E^c$. Therefore, $\mu=\omega^\infty_{E^c}$ (recall that $\mu(\partial E)=1$). 
For Lipschitz sets $\Omega$, it is well-known that $\omega^{\infty}$ is absolutely continuous with respect to $\Hu\LL \partial \Omega$ with density in $L^p(\partial \Omega)$ for some $p>1$ (see \cite[Theorem 4.2]{GarMar}). However,  we will need a stronger result, 
namely that it is in $L^p(\partial \Omega)$ for some $p>2$, with estimates on the $L^p$ norm depending only on the geometry of $\Omega$.


Given a convex body $E$ and a point $x\in\partial E$, we call {\it angle} of $\partial E$ at $x$
the angle spanned by the tangent cone $\cup_{\lambda>0}\lambda(E-x)$.

We now state a crucial lemma which relates in a quantitative way the regularity of $E$ with the integrability properties of the corresponding harmonic measure. This result is a slight  adaptation of \cite[Theorem  2]{WarSchob}.

\begin{lemma}\label{conformalregularity}
	Let $E$ be a convex body containing the origin in its interior, 
	let $\overline{\zeta}\in (0,\pi]$ be the minimal angle of $\partial E$,
	and let $p_c:=\frac{\pi}{\pi-\overline{\zeta}}+1$ if $\overline{\zeta}<\pi$
	and $p_c:=+\infty$ if $\overline{\zeta}=\pi$.
	Let also $E_n$ be a sequence of convex bodies converging to $E$ in the Hausdorff topology.
	Then, for  every $1\le p<p_c$, there exists $C(p,\partial E)$ such that 
	for $n$ large enough (depending on $p$), 
	every conformal map $\psi_n : E_n^c\to B_1$ with $\psi_n(\infty)=0$ satisfies
	\begin{equation}\label{est1}
	\int_{\partial E_n}|\psi'_n|^p\le C(p,\partial E),
	\end{equation} 
	where we indicate by $|\psi'_n|$ the absolute value of the derivative of $\psi_n$ (seen as a complex function).
	In particular, for $n$ large enough, $\psi'_n\in L^p(\partial E_n)$ for some $p>2$.  
\end{lemma}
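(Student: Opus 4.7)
My plan is to derive the estimate in two steps: first for the limit body $E$ alone, where it is a direct consequence of [WarSchob, Theorem 2], and then to transfer it uniformly to the approximating sequence $E_n$ by combining Carath\'eodory's kernel convergence theorem with a near-corner bound that uses convexity.

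For the limit set the Warschawski--Schober theorem provides pointwise control of the derivative of the conformal map $\psi:E^c\to B_1$, $\psi(\infty)=0$, near each corner $x_i\in\partial E$ with interior angle $\zeta_i\in(0,\pi]$. The exponent is dictated by the Schwarz--Christoffel scaling: since the opening angle on the $E^c$-side at $x_i$ is $2\pi-\zeta_i$, one has $\psi(z)\sim(z-x_i)^{\pi/(2\pi-\zeta_i)}$ locally at $x_i$, and therefore
\[|\psi'(z)|\lesssim |z-x_i|^{-(\pi-\zeta_i)/(2\pi-\zeta_i)}.\]
Integrating this pointwise bound against arc length on $\partial E$, the singularity at $x_i$ is $L^p$-integrable as soon as $p(\pi-\zeta_i)/(2\pi-\zeta_i)<1$, i.e.\ $p<\pi/(\pi-\zeta_i)+1$. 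Minimizing the threshold over all corners produces $p_c=\pi/(\pi-\overline{\zeta})+1$, which is strictly larger than $2$ as long as $\overline{\zeta}>0$; in the smooth case $\overline{\zeta}=\pi$ every $p$ is admissible.

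To extend the bound to $E_n$, I would first invoke Carath\'eodory's kernel convergence theorem applied to the complements: since $E_n\to E$ in the Hausdorff sense and all $E_n^c$ share a neighborhood of $\infty$, the kernels $E_n^c$ converge to $E^c$ based at $\infty$, so after an appropriate rotational normalization $\psi_n\to\psi$ locally uniformly on $E^c$, together with their derivatives. Combined with $\partial E_n\to\partial E$ in Hausdorff, this yields uniform control of $\int_{\partial E_n\setminus\bigcup_i B_\delta(x_i)}|\psi_n'|^p$ as $n\to\infty$. Near a corner $x_i$ of $E$, the decisive observation is that convexity of $E_n$ together with Hausdorff proximity to $E$ forces the total turning of the outward normal of $\partial E_n$ inside $B_\delta(x_i)$ to be at most $\pi-\zeta_i+\omega(\delta)$, with $\omega(\delta)\to 0$ as $\delta\to 0$ uniformly in $n$. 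Re-running the Warschawski--Schober pointwise estimate with this angular control then gives
\[\int_{\partial E_n\cap B_\delta(x_i)}|\psi_n'|^p\lesssim C(p,\partial E)\]
for every $p<p_c$ and every $n$ large enough.

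The main obstacle is precisely this uniform near-corner estimate. Individual $E_n$ can be $C^\infty$, so the intrinsic corner structure of $\partial E_n$ carries no information and a naive application of [WarSchob, Theorem 2] to each $E_n$ in turn would give constants blowing up with $n$. The ``slight adaptation'' alluded to in the paper amounts to phrasing the Warschawski--Schober pointwise bound directly in terms of the total turning of the outward normal of the convex body on dyadic balls around a boundary point, a quantity that is manifestly continuous with respect to Hausdorff convergence of convex bodies. With this reformulation the same constants work simultaneously for $E$ and for every $E_n$ with $n$ large, and summing the estimates over the finitely many corners of $E$ completes the proof.
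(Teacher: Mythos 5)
Your opening analysis of the limit map $\psi$ for $E$ alone (the Schwarz--Christoffel exponent $\pi/(2\pi-\zeta_i)$ at a corner, the resulting $|\psi'|\sim|z-x_i|^{-(\pi-\zeta_i)/(2\pi-\zeta_i)}$, and hence the integrability threshold $p_c$) is correct, and you correctly identify that the real content of the lemma is the uniform estimate over the approximating sequence. However, the architecture you then propose --- Carath\'eodory kernel convergence away from corners plus an ad hoc near-corner bound --- has a genuine gap. Carath\'eodory convergence gives locally uniform convergence of $\psi_n$ and $\psi_n'$ on \emph{compact subsets of} $E^c$, which says nothing about boundary values: $\partial E_n$ is never contained in a fixed compact subset of $E^c$, so there is no mechanism in your argument by which interior locally uniform convergence produces a uniform bound on $\int_{\partial E_n\setminus\bigcup_i B_\delta(x_i)}|\psi_n'|^p$. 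Moreover, a convex body can have countably infinitely many corners, so the decomposition ``sum over the finitely many corners of $E$'' is not available in general.

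The deeper issue is that the step you flag as the ``main obstacle'' --- making the Warschawski--Schober estimate for $\psi_n$ uniform in $n$ --- is precisely where all the work in the paper's proof lies, and your proposal does not carry it out: ``re-running the pointwise estimate with this angular control'' and the claim that total turning on dyadic balls is ``manifestly continuous'' under Hausdorff convergence are not arguments. The paper does not split into near-corner and far-from-corner pieces at all; it works globally with the arclength angle functions $v^n$ and $v$, fixes $p<p'<\pi/\bar\eta$ to create slack, transfers the oscillation bound from $v$ to $v^n$ via convexity and Hausdorff convergence, and then re-runs the Warschawski--Schober construction with the \emph{pointwise} quantities $G_j^n$ replaced by averaged (integral) ones --- together with a specific choice of partition $\{s_j\}$ obtained by an averaging/pigeonhole argument, and an extra modification to control $\max_{\lambda_j^n\ne 0}1/\lambda_j^n$. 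These modifications are where the lemma actually gets proved, and they are absent from your sketch.
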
 

\begin{proof}
The scheme of  the proof follows that of   \cite[Theorem  2, Equation (9)]{WarSchob}, thus we limit ourselves to point out the main differences. We begin by noticing that although \cite[Theorem  2]{WarSchob} is written for bounded sets, up to composing with the map $z\to z^{-1}$ this does not create any difficulty.

We first introduce some notation from \cite{WarSchob}. Given a convex body $E$ we let 
 $\partial E=\{\gamma(s) \ : \ s\in[0,L]\}$ be an arclength parametrization of $\partial E$.
Notice that, for every $s$, the left and right derivatives
$\gamma'_{\pm}(s)$ exist and the angle $v(s)$ between $\gamma'(s)$ and a fixed direction, say  $e_1$, is a function of bounded variation. Up to changing the orientation of $\partial E$,
we can assume that $v$ is increasing.
%
We then let 
\begin{equation*}
\bar \eta:=\max_{s} [v(s^+)-v(s^-)]\ge 0,
\end{equation*}
where $v(s^\pm)$ are the left and right limits at $s$ of $v$. Notice that $\overline\zeta=\pi-\overline{\eta}$ is the minimal angle of $\partial E$.

Letting $\vphi_n:= \psi_n^{-1}$,
we want to prove that there exists $C(p,\partial E)$ such that 
\[
 \int_{\partial B_1} |\vphi_n'|^{-p}\le C(p,\partial E),
\]
for $n$ large enough and  for $p<\pi/\overline{\eta}$. 
By a change of variables, this yields \eqref{est1}. Let $p<p'<\pi/\overline{\eta}$, and let as in \cite{WarSchob},
\[ 
h:= \frac{1}{2\pi}(p \overline{\eta}+\pi)  \qquad \textrm{and} \qquad h':= \frac{1}{2\pi}(p'\overline{\eta}+\pi),
\]
so that 
\[\frac{\pi h}{p}>\frac{\pi h'}{p'}>\overline{\eta}.\]
Let now $v^n$ (respectively $v$) be the angle functions corresponding to the sets $E_n$ 
(respectively $E$).
As in \cite{WarSchob}, there exists $\delta>0$ such that for $s-s'\le \delta$,
\[v(s)-v(s')\le \frac{\pi h'}{p'}.\]
By the convexity of $E_n$ and by the convergence of $E_n$ to $E$, 
for $n$ large enough and for $s-s'\le \delta$ we get that 
\[v^n(s)-v^n(s')\le \frac{\pi h}{p}.\]
Let $L_n:=\Hu(\partial E_n)$ and let us extend $v^n$ to $\R$ by letting  for $s\ge 0$, 
$v^n(s):=v^n(L_n \lfloor s/ L_n\rfloor)+v^n(s-L_n \lfloor s/ L_n\rfloor)$, and similarly for $s\le 0$, 
so that $v^n$ is an increasing function with $(v^n)'$ periodic of period $L_n$.
Let now $k_n:=\lceil L_n/\delta \rceil\in\mathbb N$ and $\delta_n:=L/k_n$. By the convergence of $E_n$ to $E$, $k_n$ and $\delta_n$ are uniformly bounded from above and below. For $t\in[0,\delta_n]$, and $0\le j\le k_n$, let $s^t_j:= t+j \delta_n$.
Since 
\begin{align*}
 \int_0^{\delta_n}\sum_{j=0}^{k_n-1} \int_{s_j^t}^{s_{j+1}^t} \frac{v^n(s)-v^n(s^t_j)}{s-s^t_j} ds dt&=  \sum_{j=0}^{k_n-1} \int_0^{\delta_n} \int_0^{\delta_n} \frac{v^n(s+t+ j \delta_n)-v^n(t+j \delta_n)}{s} dt ds\\
 &= \int_0^{\delta_n} \frac{1}{s} \sum_{j=0}^{k_n-1} \int_0^{\delta_n}v^n(s+t+ j \delta_n)-v^n(t+j \delta_n) dt ds\\
 &= \int_0^{\delta_n} \frac{1}{s} \lt(\int_{L_n}^{L_n+s} v^n(t) dt -\int_0^s v^n(t) dt\rt) ds\\
 &\le 2\delta_n\sup_{[0,2L_n]} |v^n|\les \delta_n\|v\|_\infty,
\end{align*}
we can find $\overline{t}\in (0,\delta_n)$ such that 
\[
\sum_{j=0}^{k_n-1} \int_{s_j^{\overline{t}}}^{s_{j+1}^{\overline{t}}} \frac{v^n(s)-v^n(s^{\overline{t}}_j)}{s-s^{\overline{t}}_j} ds\les \|v\|_\infty.\]
For notational simplicity, let us simply denote $s_j:= s_{j}^{\overline{t}}$. Arguing as above, we can further assume that 
\[\sum_{j=0}^{k_n-1} \int_{s_j}^{s_{j+1}} \frac{v^n(s_{j+1})-v^n(s)}{s_{j+1}-s} ds\les \|v\|_\infty.\]
The proof then follows almost exactly as in \cite[Theorem  2]{WarSchob}, by replacing  the pointwise quantity
\[G^n_j:= \sup_{s_j<s<s_{j+1}} \frac{v^n(s)-v^n(s_j)}{s-s_j},\]
by the integral ones. There is just one additional change in the proof: letting 
$0\le \lambda_j^n:= v^n(s_{j+1})-v^n(s_j)\le \frac{\pi h}{p}$, we see that  in the estimates of \cite[Theorem  2]{WarSchob}, 
the quantity $\max_{\lambda_{j}^n\neq 0} 1/\lambda_j^n$ appears and could be unbounded in $n$. Let $\gamma_n(s)$ be the arclength parametrization of $\partial E_n$ and let $\theta_n(s)$ be such that 
$\gamma_n(s)=\vphi_n(e^{i\theta_n(s)})$. For $0<r<1$  and $j\in [0, k_n-1]$, if $\lambda_j^n\neq0$, we have 
\[\frac{1}{\lambda_j^n}\int_{s_j}^{s_{j+1}} dv^n(s)\int_{\theta_n(s_j)}^{\theta_n(s_{j+1})} \frac{dt}{|e^{i\theta_n(s)}-r e^{it}|^h}\les \frac{1}{1-h}.
\]
Using this estimate, the proof can be concluded  exactly as in \cite[Theorem  2]{WarSchob}.
\end{proof}




We can now prove Theorem \ref{estimregmuconv}.

\begin{proof}[Proof of Theorem \ref{estimregmuconv}]
 Without loss of generality we can assume that the sets $E_n$ and $E$ contain the origin in their interior.
 As observed above, we then have $\mu_n=\omega^{\infty}_{E_n^c}$.
 Let $\psi_n$ be a  conformal mapping from $E_n^c$ to $B_1$ with $\psi_n(\infty)=0$. 
 We have
 \[
 \mu_n = \omega^\infty_{E_n^c}= (\psi_n^{-1})_\sharp\, \omega^0_{B_1} 
 = (\psi_n^{-1})_\sharp\, \frac{\Hu \LL \partial B_1}{2\pi}
  = \frac{|\psi_n'|}{2\pi} \Hu \LL \partial E_n.
 \]
Then, Lemma \ref{conformalregularity}  gives the desired estimate.
\end{proof}

We will also need a similar estimate for $C^{1,\beta}$ sets.

\noindent
\begin{lemma}\label{regmu}
	Let  $E$ be a convex set with boundary of class $C^{1,\beta}$. Then, the optimal charge distribution $\mu$ is of class $C^{0,\beta}$ and in particular it is in $L^\infty(\partial E)$. Moreover, $\|\mu\|_{C^{0,\beta}}$ depends only on the $C^{1,\beta}$ norm of $\partial E$.
\end{lemma}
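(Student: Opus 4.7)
The plan is to leverage the conformal representation of $\mu$ already derived in the proof of Theorem \ref{estimregmuconv} and then invoke the classical boundary regularity theory for conformal maps. Concretely, assuming without loss of generality that $0 \in E^\circ$, we have the identity
\[
\mu = \frac{|\psi'|}{2\pi}\,\H^1\LL \partial E,
\]
where $\psi:E^c \to B_1$ is the conformal map with $\psi(\infty)=0$. Thus it suffices to show that $|\psi'|$ extends to a function on $\partial E$ that is bounded below away from zero and Hölder continuous of order $\beta$, with norms controlled by the $C^{1,\beta}$ norm of $\partial E$.

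The core ingredient is the Kellogg--Warschawski theorem: if $\partial E$ is of class $C^{1,\beta}$, then the conformal map $\psi$ (or equivalently its inverse $\vphi = \psi^{-1}$) extends up to the boundary as a $C^{1,\beta}$ diffeomorphism, and its derivative is bounded above and below. To place ourselves in the standard (bounded) setting of this theorem, one can first invert via $z \mapsto 1/z$, so as to transform $E^c$ into a bounded simply connected domain whose boundary is still of class $C^{1,\beta}$ with norm controlled by that of $\partial E$ (since $0$ lies strictly inside $E$, the inversion is smooth on a neighborhood of $\partial E$). Applying the Kellogg--Warschawski estimates on the transformed domain and composing back yields the desired bounds on $|\psi'|$ along $\partial E$.

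Given the $C^{0,\beta}$ regularity of $\psi'$ up to $\partial E$ and the strict positivity of $|\psi'|$ there, the density of $\mu$ with respect to $\H^1\LL \partial E$ is $C^{0,\beta}$ as a function on $\partial E$. Since the arclength parametrization of $\partial E$ is itself $C^{1,\beta}$ with constants depending only on the $C^{1,\beta}$ norm of $\partial E$, the Hölder norm of $\mu$ inherits the same dependence, giving the quantitative statement.

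The only delicate point is ensuring that the constants in the Kellogg--Warschawski estimates depend only on $\|\partial E\|_{C^{1,\beta}}$ and not on finer data of the domain. This is essentially a normalization issue: the convexity of $E$ together with the volume constraint $|E|=1$ fix the scale and rules out degeneracies (e.g., one may normalize the conformal map by further requiring $\psi'(\infty)>0$, determining it uniquely), so the quantitative version of Kellogg--Warschawski applies uniformly. Once this is checked, the conclusion follows directly.
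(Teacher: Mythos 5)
Your argument matches the paper's proof: both identify $\mu$ with $\frac{|\psi'|}{2\pi}\,\H^1\LL\partial E$ for the exterior conformal map $\psi$ and then invoke boundary regularity of conformal maps onto $C^{1,\beta}$ domains (the paper cites \cite[Theorem 3.6]{pommerenke}, which is the Kellogg--Warschawski theorem). Your added remarks on reducing the exterior domain to the bounded case via $z\mapsto 1/z$ and on the uniformity of the Kellogg--Warschawski constants simply make explicit what the paper leaves implicit and do not change the route.
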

\begin{proof}
	Up to translation we can assume that $0\in E$ with ${\rm dist}(0,\partial E)\ge c$ (with $c$ depending only on the $C^{1,\beta}$ character of $\partial E$).
	By  \cite[Theorem  3.6]{pommerenke}, there exists a conformal mapping  $\psi$ of class $C^{1,\beta}$ which maps $E^c$ into $B_1$ 
	with $\psi(\infty)=0$ and $\|\psi\|_{C^{1,\beta}(E^c)}$ controlled by the $C^{1,\beta}$ character of $\partial E$.
	Since, as before, $\mu=(\psi^{-1})_\sharp \omega^0_{B_1}$, and the claim follows by Lemma \ref{conformalregularity}.
\end{proof}

\section{$C^{1,1}$-regularity of minimizers for $N=2$ and $\alpha=0$}\label{secreg}

In this section we show that any minimizer of \eqref{problema} has boundary of class $C^{1,1}$. We begin by showing that we can drop the volume constraint, by adding a volume penalization to the functional. This penalization is commonly used in isoperimetric type problems (see for instance \cite{EspFus,GolNo} and references therein).
Let $\Lambda$ be a positive number and define the functional
\[
\mathcal{G}_\Lambda (E):=P(E)+Q^2\mathcal I_0(E)+\Lambda\left||E|-1\right|.
\]
\begin{lemma}\label{lemrego2}
For every $Q_0>0$, there exists $\overline \Lambda>0$ such that, if $\Lambda>\overline{\Lambda}$ and $Q\le Q_0$, the minimizers of 
\begin{equation}\label{minLambda}
\min_{E\subseteq\R^2,\, \text{$E$ {\rm convex}}} \mathcal{G}_\Lambda (E)
\end{equation}
are also minimizers of \eqref{problema}
and vice-versa. Furthermore, the diameter of the minimizers of \eqref{minLambda} is uniformly bounded by a constant depending only on $Q_0$.
\end{lemma}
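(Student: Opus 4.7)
The plan is the classical penalization argument: show that for $\Lambda$ large enough any minimizer of $\mathcal{G}_\Lambda$ must have unit volume, so that the penalty vanishes and the equivalence with \eqref{problema} becomes automatic.

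First I would extract a uniform bound on $\diam(E)$ (hence on $P(E)$) for minimizers $E$ of $\mathcal{G}_\Lambda$. Testing against a fixed ball $B$ of unit volume yields $\mathcal{G}_\Lambda(E) \le P(B) + Q_0^2\,\Iz(B) =: M(Q_0)$ (note that $\Iz(B)>0$, so the bound is uniform in $Q\le Q_0$). This immediately gives $\Lambda\,||E|-1|\le M(Q_0)$, so $|E|\in[1/2,3/2]$ as soon as $\Lambda\ge 2M(Q_0)$; and, inscribing a parallelepiped $\mathcal R\subseteq E\subseteq C_2\mathcal R$ with sides $\lambda_1\ge\lambda_2$ via Lemma~\ref{john}, the estimate $\Iz(\mathcal R)\ge -\log\lambda_1-C$ turns the energy bound into
\[
\lambda_1 - C Q_0^2 \log\lambda_1 \les M(Q_0),
\]
forcing $\diam(E)\les D(Q_0)$ and $P(E)\le C(Q_0)$ independently of $\Lambda\ge 2M(Q_0)$. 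Existence of minimizers of $\mathcal{G}_\Lambda$ follows from the compactness argument used in the proof of Theorem~\ref{esistenza}.

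The core step will be a scaling identity: for any convex body $E$ with $v:=|E|>0$, the rescaling $\tilde E:=v^{-1/2}E$ has unit volume and satisfies $P(\tilde E)=v^{-1/2}P(E)$ and $\Iz(\tilde E)=\Iz(E)+\tfrac12\log v$ (the latter since $\Iz(\lambda E)=\Iz(E)-\log\lambda$). Inserting $\tilde E$ as a competitor for a minimizer $E$ of $\mathcal{G}_\Lambda$ gives
\[
\Lambda|v-1|\le (v^{-1/2}-1)\,P(E)+\tfrac{Q^2}{2}\log v.
\]
When $v>1$, both $v^{-1/2}-1<0$ and $\log v\le v-1$, so the right-hand side is bounded by $\tfrac{Q_0^2}{2}(v-1)$, forcing $v\le 1$ whenever $\Lambda>Q_0^2/2$. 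When $v<1$, the identity $v^{-1/2}-1=(1-v)/(v^{1/2}(1+v^{1/2}))$ combined with the a priori lower bound $v\ge 1/2$ yields $v^{-1/2}-1\le \sqrt 2\,(1-v)$, and since $\log v<0$ one gets $\Lambda(1-v)\le \sqrt 2\,C(Q_0)(1-v)$, a contradiction once $\Lambda>\sqrt 2\,C(Q_0)$. Choosing $\overline\Lambda$ larger than all three thresholds forces $|E|=1$, so every minimizer $E_\Lambda$ of $\mathcal{G}_\Lambda$ is admissible in \eqref{problema}. The equivalence is then settled by sandwiching: for any minimizer $E^*$ of \eqref{problema},
\[
\FzQ(E^*)\le \FzQ(E_\Lambda)=\mathcal{G}_\Lambda(E_\Lambda)\le \mathcal{G}_\Lambda(E^*)=\FzQ(E^*),
\]
and equality holds throughout.

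The main obstacle I expect is the small-volume case $v<1$: unlike $v>1$, it genuinely requires both the lower bound $v\ge 1/2$ and the perimeter bound $P(E)\le C(Q_0)$, because the scaling inequality contains $P(E)$ explicitly on the right-hand side. This is why the uniform diameter estimate must be extracted \emph{before} the scaling argument, and why it is crucial that the upper bound $M(Q_0)$ is independent of $\Lambda$.
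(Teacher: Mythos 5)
Your proposal is correct and follows the same strategy as the paper: test against the unit-volume ball for a uniform bound on $\Gc_\Lambda(E)$, deduce uniform diameter and perimeter bounds, then compare a minimizer $E$ with its unit-volume rescaling $v^{-1/2}E$ (with $v=|E|$) to force $v=1$ once $\Lambda$ is large, and finally use the sandwich argument for the equivalence with \eqref{problema}. The one slip is an ordering issue: the deduction $\Lambda\bigl||E|-1\bigr|\le M(Q_0)$ is not ``immediate'' from $\Gc_\Lambda(E)\le M(Q_0)$, since $P(E)+Q^2\Iz(E)$ can be negative for large sets; you must first establish the diameter bound (so that $\Iz(E)\ge\Iz(B_R)$ and the right-hand side becomes $M(Q_0)+Q_0^2|\Iz(B_R)|$), which is precisely how the paper proceeds --- after that reordering, the remainder of your argument (including the explicit algebraic treatment of both cases $v>1$ and $v<1$, where the paper spells out only $v<1$ via a Taylor expansion and calls the other case analogous) goes through.
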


\begin{proof}
Let us fix $Q_0>0$ and let $Q<Q_0$. 
Let $B$ be a ball with $|B|=1$. Then for any $E\subset \R^2$ such that $\G(E)\le\G(B)$ we have
\[
\text{diam}(E)-Q^2 \log( \text{diam}(E)) \le \mathcal{G}_\Lambda(E)\le \mathcal{G}_\Lambda(B)=\FzQ(B)\les 1,
\]
where the constant involved depends only on $Q_0$.  For such sets,  $ \text{diam}(E)$ is bounded by a constant $R$ depending only on $Q_0$, and thus $I_0(E)\ge I_0(B_R)$. This implies that every minimizing sequence is uniformly bounded so that, up to passing to a subsequence, it converges in Hausdorff distance to a minimizer of $\G$ whose diameter is bounded by $R$.  Moreover, for 
\[
\Lambda>\overline\Lambda:=P(B)+Q_0^2\left(\Iz(B)+|\Iz(B_R)| \right)
\] 
we have that $|E|>0$. Indeed, for $|E|=0$  the inequality $\G(E)\le\G(B)$ implies $\Lambda\le\overline{\Lambda}$.

Notice that the minimum in \eqref{minLambda} is always less or equal than the minimum in \eqref{problema}. We are thus left to prove the opposite inequality. 
Assume that $E$ is not a minimizer for $\FzQ$. In this case we get that 
\[
\sigma:=||E|-1|>0.
\] 
\noindent

From the uniform bound on the diameter of $E$ we deduce that  $\Lambda \sigma$ is itself also bounded by a constant (again depending only on $Q_0$). 
From now on we assume that $|E|<1$, or equivalently, $|E|=1-\sigma$, 
since the other case is analogous.
Let us define
\[
F:=\frac{1}{(1-\sigma)^\frac12}E,
\] 
so that $|F|=1$. Then, by the minimality of $E$,  the homogeneity of the perimeter and recalling that 
\[
\mathcal I_0 (\lambda E)=\mathcal I_0 (E)-\log(\lambda),
\]
a Taylor expansion gives 
\[
\begin{aligned}
\Lambda\sigma &=\mathcal{G}_\Lambda(E)-\FzQ(E)\\
&\le \mathcal{G}_\Lambda(F)-\FzQ(E)\\
&=P(E)\left(1-\sigma\right)^{-\frac{1}{2}}+Q^2\mathcal I_0 (E)+\frac{1}{2}\log(1-\sigma) -\FzQ(E)\\
&\le P(E)(\left(1-\sigma\right)^{-\frac{1}{2}}-1)\\
&\le \frac{P(E)}{2} \sigma,
\end{aligned}
\]
so that $\Lambda\le \frac{P(E)}{2}\les1$. Therefore, if $\Lambda$ is large enough,  we must have $\sigma=0$ or equivalently that $E$ is also a minimizer of $\FzQ$.

%
\end{proof}

 Let  now $E$ be a minimizer of \eqref{minLambda}. In order to prove the regularity of $E$, we 
 shall construct a competitor in the following way: since $E$ is a convex body, 
 there exists $\eps_0$ such that for $\eps\le \eps_0$,  and every  $x_0\in \partial E$, we have 
 $\partial E\cap \partial B_\eps(x_0)=\{\xu,\xd\}$ (in particular $|x_0-x^\eps_i|=\eps$). 
 Let us fix  $x_0$.  For $\eps\le \eps_0$, let $\xu$, $\xd$ be given as above and let $L_\eps$ be the line joining $\xu$ to $\xd$. Denote by $H_\eps^+$  the half space with boundary $L_\eps$ containing $x_0$ (and $H_\eps^-$ be its complementary). We then define our competitor as
\[E_\eps:= E\cap H^-_\eps.\]
Let us fix some further notation (see Figure \ref{fig1}):
	\begin{itemize}
		\item[-] We denote by $\Pi:\partial E\cap H_\eps^+\to L_\eps$ the projection 
		of the cap of $\partial E$ inside $H_\varepsilon^+$, on $L_\eps$. We shall extend $\Pi$ to the whole $\partial E$ as the identity, outside $\partial E\cap H_\eps^+$.
		\item[-] If $f \H^1\restr \partial E$ is the optimal measure for $\Iz(E)$, we let 
		$f_\eps:= \Pi_\sharp f$ (which is defined on $\partial E_\eps$) so that  $\mu_\eps:= f_\eps H^1 \restr \partial E_\eps$ is a competitor for $\Iz(E_\eps)$. 
		\item[-]  For $x,y\in\partial E$, we denote by $\gamma_\eps(x,y)$ the acute angle between the line $L_{x,y}$ joining   $x$ to $y$ and $L_\eps$  (if $L_{x,y}$ is parallel to $L_\eps$, we set $\gamma_\eps(x,y)=0$).
		\item[-] If $y=x_0$, then we denote $\gamma_\eps(x):=\gamma_\eps(x,x_0)$.
		\item[-] We let $\gamma_\eps:=\gamma_\eps(\xu)=\gamma_\eps(\xd)$. 
		\item[-] We let $\partial B_{3\eps}(x_0)\cap \partial E=\{x_1^{3\varepsilon},x_2^{3\varepsilon}\}$.
		As before, we define $H_{3\eps}^+$ as the half space bounded by $L_{x_1^{3\eps},x_2^{3\eps}}$ containing $x_0$ and $H_{3\eps}^-$ its complementary. We then let $\Sigma_\eps:=\partial E\cap H^+_\eps$, $\Sigma_{3\eps}:= \partial E\cap H_{3\eps}^+$ and $\Gamma_\eps:= \partial E\cap H_{3\eps}^-$. 
		\item[-] We let $\Delta V:= |E|-|E_\eps|$, $\Delta P:= P(E)-P(E_\eps)$ and $\Delta \Iz:= \Iz(E_\eps)-\Iz(E)$.
	\end{itemize}
\begin{figure}
	\includegraphics[scale=1.5]{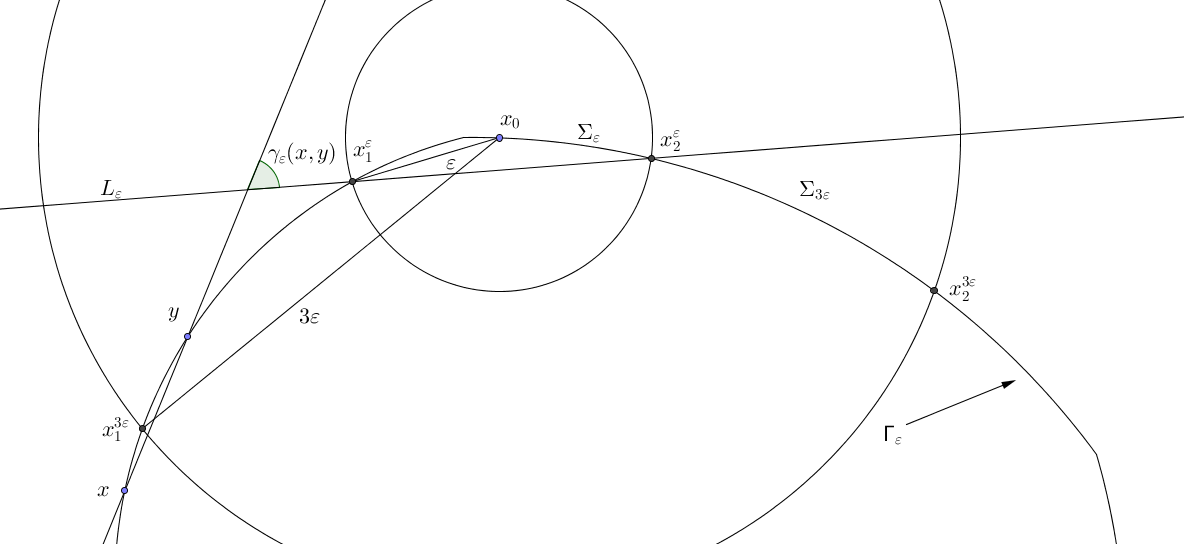}
	\caption{}\label{fig1}
\end{figure}
		
			We point out some simple remarks:
			\begin{itemize}
				\item[-] Thanks to Theorem \ref{estimregmuconv}  we have that the optimal measure $f$ satisfies $f\in L^p(\partial E)$ for some $p=p(E)>2$.
				\item[-] If $E$ is a convex body then $\gamma_\eps$ is bounded away from $\frac{\pi}{2}$ and $|x_1^{3\eps}-\xu|\sim |x_2^{3\eps}-\xd|\sim\eps$.
				\item[-] The quantities $\Delta V$, $\Delta P$ and $\Delta \Iz$ are nonnegative by definition.
				\item[-] All the constants involved up to now depend only on the Lipschitz character of $\partial E$. In particular, if $E_n$ is a sequence of convex bodies
				converging to a convex body $E$, then these constants depend only on the geometry of $E$. 
			\end{itemize}
		
		 
	Before stating the main result of this section, we prove two  regularity lemmas.
	\begin{lemma}\label{geomreglemma}
	Let $0<\beta\le 1$ and $C,\eps_0>0$ be given. Then, every convex body $E$ such that for every $x_0\in \partial E$ and every $\eps\le \eps_0$,
	\begin{equation}\label{hypdeltaV}
	\Delta V\le C \eps^{2+\beta}
	\end{equation}
	is $C^{1,\beta}$ with $C^{1,\beta}-$norm depending only on the Lipschitz character of $\partial E$, $\eps_0$ and $C$. 
	\end{lemma}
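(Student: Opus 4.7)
The plan is to translate the volumetric assumption into a pointwise control on the tangent direction, via convexity. The key geometric tool is the sagitta (the distance $h_\varepsilon$ from $x_0$ to the chord $L_\varepsilon$) at scale $\varepsilon$, combined with a local graph representation of $\partial E$ after first establishing $C^1$ regularity.

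The first step is to lower-bound $\Delta V$ by the area of the triangle with vertices $x_1^\varepsilon, x_0, x_2^\varepsilon$: by convexity this triangle lies inside the cap $E\cap H_\varepsilon^+$. Its area equals $\varepsilon^2\sin\gamma_\varepsilon\cos\gamma_\varepsilon$, and since the Lipschitz character of $\partial E$ keeps $\gamma_\varepsilon$ bounded away from $\pi/2$, the hypothesis $\Delta V\lesssim\varepsilon^{2+\beta}$ gives $\sin\gamma_\varepsilon\lesssim\varepsilon^\beta$, or equivalently $h_\varepsilon=\varepsilon\sin\gamma_\varepsilon\lesssim\varepsilon^{1+\beta}$. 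In particular $\gamma_\varepsilon\to 0$ uniformly as $\varepsilon\to 0$, so the two unit vectors $(x_i^\varepsilon-x_0)/\varepsilon\in T_{x_0}E$ sub-converge to antipodal limits; convexity of the tangent cone then forces it to contain the full line they span, hence to be a half-plane. Thus $\partial E$ admits a unique tangent at every point and is $C^1$.

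Fixing $x_0\in\partial E$ and passing to orthonormal coordinates with $x_0=0$ and tangent horizontal, in a neighborhood whose radius depends only on the Lipschitz character of $\partial E$ one can write $\partial E=\{(t,f(t))\}$ with $f$ convex, $f(0)=f'(0)=0$ and $f\ge 0$, and the points $x_i^\varepsilon$ have abscissae $-a,b$ with $a,b\sim\varepsilon$. The height at $t=0$ of the chord $L_\varepsilon$ is $(af(b)+bf(-a))/(a+b)$, so the sagitta estimate together with $f\ge 0$ and $a,b\sim\varepsilon$ gives $f(\pm t)\le C|t|^{1+\beta}$ for $|t|$ small. The convexity inequality
\[
f'(t)\le\frac{f(2t)-f(t)}{t}\le\frac{f(2t)}{t}\le C\,2^{1+\beta}\,t^\beta,
\]
together with its analogue for $t<0$, then upgrades this to $|f'(t)|\lesssim|t|^\beta$. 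Translating back, the tangent angle $\theta$ along $\partial E$ satisfies $|\theta(y)-\theta(x_0)|\lesssim|y-x_0|^\beta$ at every $x_0$, which is exactly the desired $C^{1,\beta}$ regularity, with constants depending only on $\varepsilon_0$, $C$, $\beta$ and the Lipschitz character of $\partial E$.

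The main delicate point will be ensuring that all the constants are truly uniform in the base point $x_0$, and in particular that the local graph representation is valid on a scale depending only on the Lipschitz character, not on the individual point. This is precisely why one first proves $C^1$ regularity, so that the tangent is well-defined everywhere and the convex graph representation $f$ has derivative tending to $0$ as $t\to 0$ at a rate controlled by the Lipschitz bound on $\partial E$; this in turn makes the comparison $a,b\sim\varepsilon$ uniform.
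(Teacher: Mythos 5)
Your proposal is correct and follows essentially the same route as the paper: translate the volume hypothesis into a bound on the local graph $u$ of $\partial E$ after shearing so that $u\ge 0$, and then read off the $C^{1,\beta}$ modulus. The main structural difference is in the middle step. The paper lower-bounds $\Delta V$ by the full secant-minus-graph area $\varepsilon(u(\varepsilon)+u(-\varepsilon))-\int_{-\varepsilon}^\varepsilon u$ and then runs a dyadic iteration ($u(t)-u(t/2)\lesssim t^{1+\beta}$, sum over scales) to conclude $u(t)\lesssim|t|^{1+\beta}$. You instead lower-bound $\Delta V$ by the inscribed triangle $\tfrac12(a f(b)+b f(-a))$, which, together with $a,b\sim\varepsilon$ and $f\ge 0$, gives $f(\pm t)\lesssim|t|^{1+\beta}$ in one step; this is a clean shortcut (and in fact shows the paper's dyadic iteration is not really needed, since $\int_0^\varepsilon u\le\frac\varepsilon 2 u(\varepsilon)$ by convexity already yields the pointwise bound from \eqref{bothhold} directly). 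Your preliminary tangent-cone step to establish $C^1$ is valid but not strictly necessary: as the paper does, one can subtract a supporting line at $\bar x$ (which exists without any regularity) to get $u\ge 0$ with $u(0)=0$, and the same bound goes through; the conclusion $u'(0)=0$ then comes out of $u(t)\lesssim |t|^{1+\beta}$ rather than being assumed in advance. Finally, a small technical caveat: when you write $f'(t)\le\frac{f(2t)-f(t)}{t}$ you should work with the one-sided derivatives $f'_\pm$ (which exist everywhere for a convex $f$) rather than $f'$, and then the bound $|f'_\pm(t)|\lesssim|t|^\beta$ gives the H\"older modulus of the tangent direction as required. Modulo these cosmetic points, the argument is sound and the constants depend only on the data as claimed.
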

	\begin{proof}
	Let $x_0\in \partial E$ be fixed.
	Since $E$ is convex, there exist $R>0$ and  a convex function $u:I\to\R$ such that $\partial E\cap B_R(x_0)=\{(t,u(t))\ : \ t\in I\}$ for some interval $I\subset \R$. Furthermore, 	$\|u'\|_{L^\infty}\les 1$. Let $\bar x\in \partial E \cap B_R(x_0)$. 
	Without loss of generality, we can assume that  $\bar x=0=(0,u(0))$. By convexity of $u$, up to adding a linear function, we can further assume that $u\ge 0$  in $I$.
Thanks to the Lipschitz bound on $u$, for $x=(t,u(t))\in \partial E\cap B_R(x_0)$, we  have
		\begin{equation}\label{tequieps}
		|x|=(t^2+|u(t)|^2)^{1/2}\sim t.
		\end{equation}
		Let now $\eps>0$.  For $\delta>0$, let $-1 \ll t_1^\delta<0<t_2^\delta\ll1$ such that $x_i^\delta=(t_i^\delta, u(t_i^\delta))$ 
		for $i=1,2$ (see the notation above). By \eqref{tequieps}, there exists  $\lambda>0$ depending only on the Lipschitz character of $u$, such that $|t_i^{\lambda \eps}|\ge \eps$. Without loss of generality, we can now assume that $u(-\eps)\le u(\eps)$.
			    In particular, considering the $\Delta V$ associated to $\lambda \eps$, we have that (see Figure \ref{fig2})
			    \[ 
			    \begin{aligned}
			    \Delta V&\ge  2\varepsilon u(\varepsilon)-\frac{2\varepsilon(u(\varepsilon)-u(-\varepsilon))}{2}-\int_{-\varepsilon}^{\varepsilon}u(t)\,dt\\
			    &=\varepsilon(u(\varepsilon)+u(-\varepsilon))
			    -\int_{-\varepsilon}^{\varepsilon}u(t)\,dt\,.
			    \end{aligned}
			    \]
\begin{figure}
		\includegraphics[scale=1]{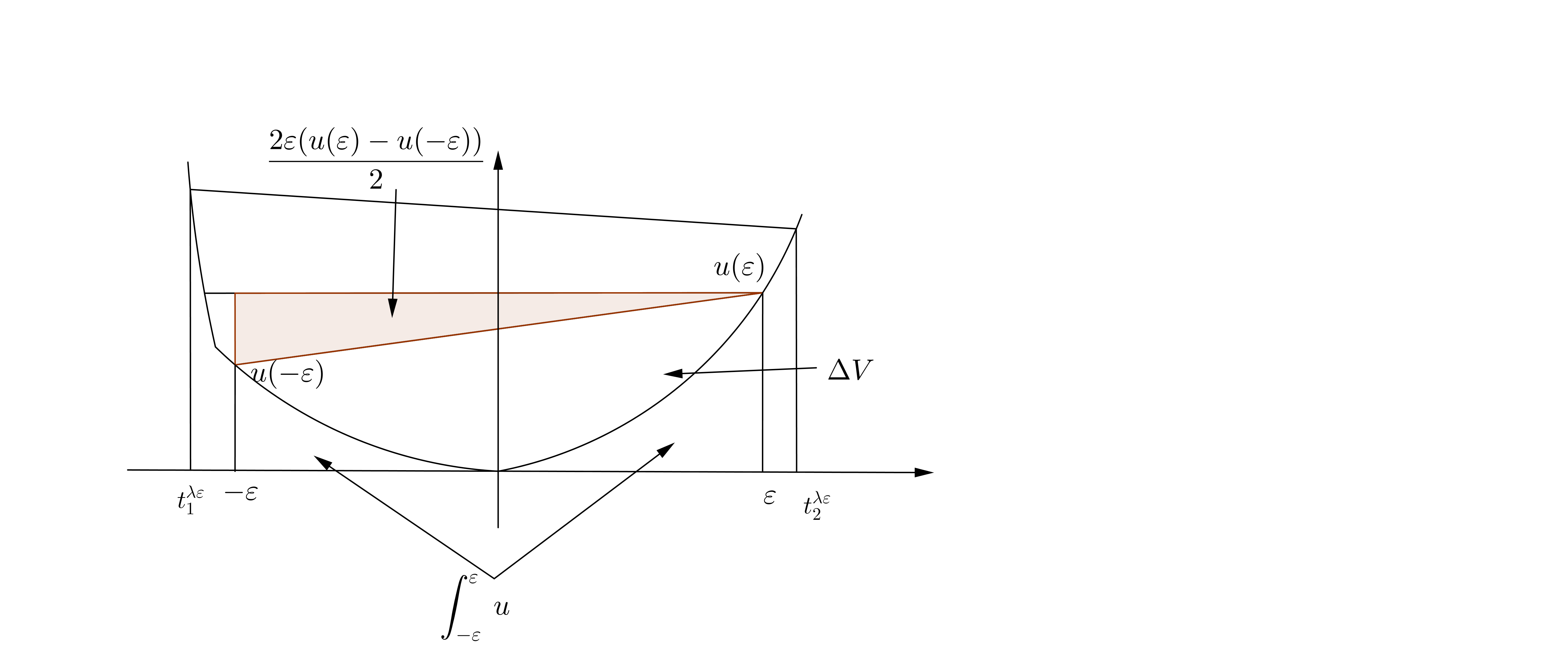}
		\caption{}\label{fig2}
\end{figure}

	
	\noindent	Since $u$ is decreasing in $[-\eps,0]$ and increasing in $[0,\eps]$, this means that both 
	\begin{equation}\label{bothhold}
	 \eps u(\eps)-\int_0^\eps u\les \eps^{2+\beta} \qquad \textrm{and } \qquad \eps u(-\eps)-\int^0_{-\eps} u\les \eps^{2+\beta}
	\end{equation}
	hold. Let us prove that this implies that for $|t|$ small enough
	\begin{equation}\label{estimu}
	 u(t)\les |t|^{1+\beta}.
	\end{equation}
We can assume without loss of generality that $t>0$. By \eqref{bothhold} and monotonicity of $u$,
\[
 t u(t)\le C t^{2+\beta}+ \int_0^{t/2} u +\int_{t/2}^t u\le C t^{2+\beta}+\frac{t}{2}(u(t/2)+u(t))
\]
from which we obtain
\[
 u(t)-u(t/2)\les t^{1+\beta}.
\]
Applying this for  $k\ge 0$ to $t_k= 2^{-k} t$ and summing over $k$ we obtain 
\[
 u(t)\les \sum_{k=0}^{\infty} \lt(2^{-k} t\rt)^{1+\beta}\les t^{1+\beta},
\]
that is \eqref{estimu}.

		In other words, we have proven that $u$ is differentiable in zero with $u'(0)=0$ and that  for $|t|$ small enough,
		\[|u(t)-u(0)-u'(0) t|\les |t|^{1+\beta}.\]
		Since the point zero was arbitrarily chosen, this yields that $u$ is differentiable everywhere and that  for $t,s\in I$ with $|t-s|$ small enough, 
		\[|u(t)-u(s)-u'(s)(t-s)|\les |t-s|^{\beta+1},\]
		which is equivalent to the $C^{1,\beta}$ regularity of $\partial E$\footnote{indeed, for $|s-t|\le \eps_1$, $|u'(t)-u'(s)|\le |t-s|^{-1}(|u(t)-u(s)-u'(s)(t-s)|+|u(s)-u(t)-u'(t)(s-t)|) \les |t-s|^{\beta}$}.
	\end{proof}

	\begin{lemma}\label{geomlemma2}
	Suppose that the minimizer  $E$ for \eqref{minLambda} has boundary of class $C^{1,\beta}$, for some $0<\beta<1$. Then, there exists $R>0$ (depending only on the $C^{1,\beta}$ character of $\partial E$)
	such that for every $x_0\in \partial E$, $x\in \Sigma_\eps$ and $y\in B_R(x_0)$,
	\begin{equation}\label{estimgammaxy}
	\gamma_\eps(x,y)\les \eps^\beta +|x-y|^\beta.
	\end{equation}
	\end{lemma}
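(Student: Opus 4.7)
The plan is to reduce to a graph parametrization of $\partial E$ around $x_0$ and exploit the uniform Hölder control on the tangent. Since $E$ is a convex body of class $C^{1,\beta}$, there exist $R>0$ and a convex function $u\colon I\to\R$, defined on some interval $I\subset\R$ centered at the parameter $t_0$ of $x_0$ with $|I|\ges R$, such that $\partial E\cap B_R(x_0)$ is the graph of $u$ in a coordinate system where the tangent to $\partial E$ at $x_0$ is horizontal (so $u'(t_0)=0$). The hypothesis gives $\|u\|_{C^{1,\beta}}\les 1$, with an implicit constant depending only on the $C^{1,\beta}$ character of $\partial E$; both $R$ and the arc-length/parameter equivalence $|x-x'|\sim |t-t'|$ (for points of $\partial E\cap B_R(x_0)$ of parameters $t,t'$) depend only on this character.

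Next I would control the direction of $L_\eps$. Since $x_1^\eps,x_2^\eps\in\partial B_\eps(x_0)$, the associated parameters $t_1^\eps,t_2^\eps$ satisfy $|t_i^\eps-t_0|\les \eps$. By the mean value theorem applied to the chord through $(t_i^\eps,u(t_i^\eps))$, the slope of $L_\eps$ in these coordinates equals $u'(\xi)$ for some $\xi$ between $t_1^\eps$ and $t_2^\eps$. Using $u'(t_0)=0$ and the $C^{0,\beta}$ bound on $u'$,
\[|u'(\xi)|=|u'(\xi)-u'(t_0)|\les |\xi-t_0|^\beta\les \eps^\beta,\]
so $L_\eps$ makes an angle $\les\eps^\beta$ with the horizontal direction (the tangent at $x_0$). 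This angle is in particular bounded away from $\pi/2$ for $\eps$ small, which one can take as $\eps\le\eps_0$ depending only on the $C^{1,\beta}$ character.

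Now fix $x\in\Sigma_\eps$ with parameter $t_x$ and $y\in B_R(x_0)$ with parameter $t_y$. Since $x\in\Sigma_\eps\subset B_\eps(x_0)$, one has $|t_x-t_0|\les \eps$, hence $|u'(t_x)|\les \eps^\beta$; and $|t_x-t_y|\les |x-y|$ from the uniform Lipschitz bound on $u$. Applying the mean value theorem to the chord between $(t_x,u(t_x))$ and $(t_y,u(t_y))$, the slope of $L_{x,y}$ equals $u'(\xi')$ for some $\xi'$ between $t_x$ and $t_y$, and
\[|u'(\xi')|\le |u'(\xi')-u'(t_x)|+|u'(t_x)|\les |t_x-t_y|^\beta+\eps^\beta\les |x-y|^\beta+\eps^\beta.\]
Hence $L_{x,y}$ also makes an angle $\les \eps^\beta+|x-y|^\beta$ with the horizontal. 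Taking the difference of the two angles, which since they are small is equivalent (up to constants) to the acute angle $\gamma_\eps(x,y)$ between $L_{x,y}$ and $L_\eps$, yields the claim.

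The only mild obstacle is ensuring a uniform radius $R$ inside which the graph representation is valid with a slope bounded away from vertical; this is immediate from $C^{1,\beta}$ regularity (so that $u'$ is continuous with $u'(t_0)=0$) and depends only on the $C^{1,\beta}$ norm, not on $x_0$. When $y\notin\partial E$, the same argument applies by replacing $y$ by its natural projection onto $\partial E$ in $B_R(x_0)$, since such a projection only moves $y$ by $\les |x-y|$ and the additional error is absorbed in $|x-y|^\beta$.
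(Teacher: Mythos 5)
Your proof is correct and follows essentially the same route as the paper: represent $\partial E$ near $x_0$ as a graph $(t,u(t))$ over the tangent at $x_0$, use the $C^{0,\beta}$ bound on $u'$ together with $u'(t_0)=0$ to control the slopes of the relevant chords, and conclude by comparing the angles that $L_{x,y}$ and $L_\eps$ make with the horizontal. Where the paper estimates $\tilde\gamma_\eps(x,y)$ by splitting into the cases $|x-y|\lesssim\eps$ and $|x-y|\gg\eps$, and separately bounds $\xi_\eps$ via an extremal configuration of $x_1^\eps,x_2^\eps$, you invoke the mean value theorem to identify each chord slope with $u'(\xi)$ at an intermediate point and then apply the H\"older bound on $u'$ directly; this is a cleaner, shorter execution of the same idea and dispenses with both case distinctions. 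One minor wording point: the final step should be justified by the triangle inequality, namely $\gamma_\eps(x,y)$ is bounded by the sum of the two angles with the horizontal, rather than by ``taking the difference'' (the paper's $\gamma_\eps=\tilde\gamma_\eps\pm\xi_\eps$ is similarly loose, so this is not a gap). The closing paragraph about $y\notin\partial E$ is superfluous since $\gamma_\eps(x,y)$ is only defined for $x,y\in\partial E$, and in the application $y$ indeed lies on $\partial E$.
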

	\begin{proof}

		\noindent 		Without loss of generality, we can assume that $x_0=0$. As in the proof of Lemma \ref{geomreglemma}, since $E$ is convex and of class $C^{1,\beta}$, in the ball $B_R(0)$, 
		for a small enough $R$, $\partial E$ is a graph over its tangent of a $C^{1,\beta}$ function $u$. Up to a rotation, we can further assume that this tangent is horizontal so
		that for some interval $I\subset \R$, we have  $\partial E\cap B_R(0)=\{ (t,u(t))\ : \ t\in I\}$. In particular, if $x=(t,u(t))\in \partial E\cap B_R(0)$, $|u(t)|\les |t|^{1+\beta}$ and $|u'(t)|\les |t|^\beta$. \\
\noindent
		For $x=(t,u(t))\in \Sigma_\eps$ and $y=(s,u(s))\in B_R(0)$, let $\tilde{\gamma}_\eps(x,y)$ be the angle between $L_{x,y}$ and the horizontal line, i.e., $\tan(\tilde{\gamma}_\eps(x,y))=\frac{|u(t)-u(s)|}{|t-s|}$. Let us begin by estimating
		$\tilde{\gamma}_\eps$. First, if $|x-y|\les \eps$ (which thanks to \eqref{tequieps} amounts to $|t-s|\les \eps$ and thus since $x\in \Sigma_\eps$, $|t|+|s|\les \eps$),
		\[\tilde{\gamma}_\eps(x,y)\sim\frac{|u(t)-u(s)|}{|t-s|}\le \sup_{r\in[s,t]} |u'(r)|\les \eps^\beta.\]
		Otherwise, if $|x-y|\gg \eps$, since $|x|\les \eps$, we have $|x-y|\sim |y|\sim |s|$ and thus 
		\[\tilde{\gamma}_\eps(x,y)\les\frac{|u(t)|+|u(s)|}{|t-s|}\les \frac{\eps^{1+\beta}+ |s|^{1+\beta}}{|s|}\les |s|^{\beta}\les |x-y|^{\beta}.\]
		Putting these estimates together, we find
		\begin{equation}\label{estimgammatilde}
		\tilde{\gamma}_\eps(x,y)\les \eps^{\beta}+|x-y|^{\beta}.
		\end{equation}
		Let $\xi_\eps$ be the angle between $L_\eps$ and the horizontal line (see Figure \ref{fig5}). Since $\gamma_\eps(x,y)=\tilde{\gamma}_\eps\pm \xi_\eps$, \eqref{estimgammaxy} 
		holds provided that we can show
		\begin{equation}\label{betaeps}
		\xi_\eps\les \eps^{\beta}.
		\end{equation}
		Let $t_1^\eps,t_2^\eps\sim\eps$ be such that $\xu=(-t_1^\eps,u(-t_1^\eps))$ and $\xd=(t_2^\eps,u(t_2^\eps))$. We see that $\xi_\eps$ is maximal if $u(-t_1^\eps)=0$,
		and then $t_1^\eps=\eps$. In that case, $\tan \xi_\eps= \frac{u(t_2^\eps)}{\eps+t_2^\eps}$.
		\begin{figure}
			\includegraphics[scale=.75]{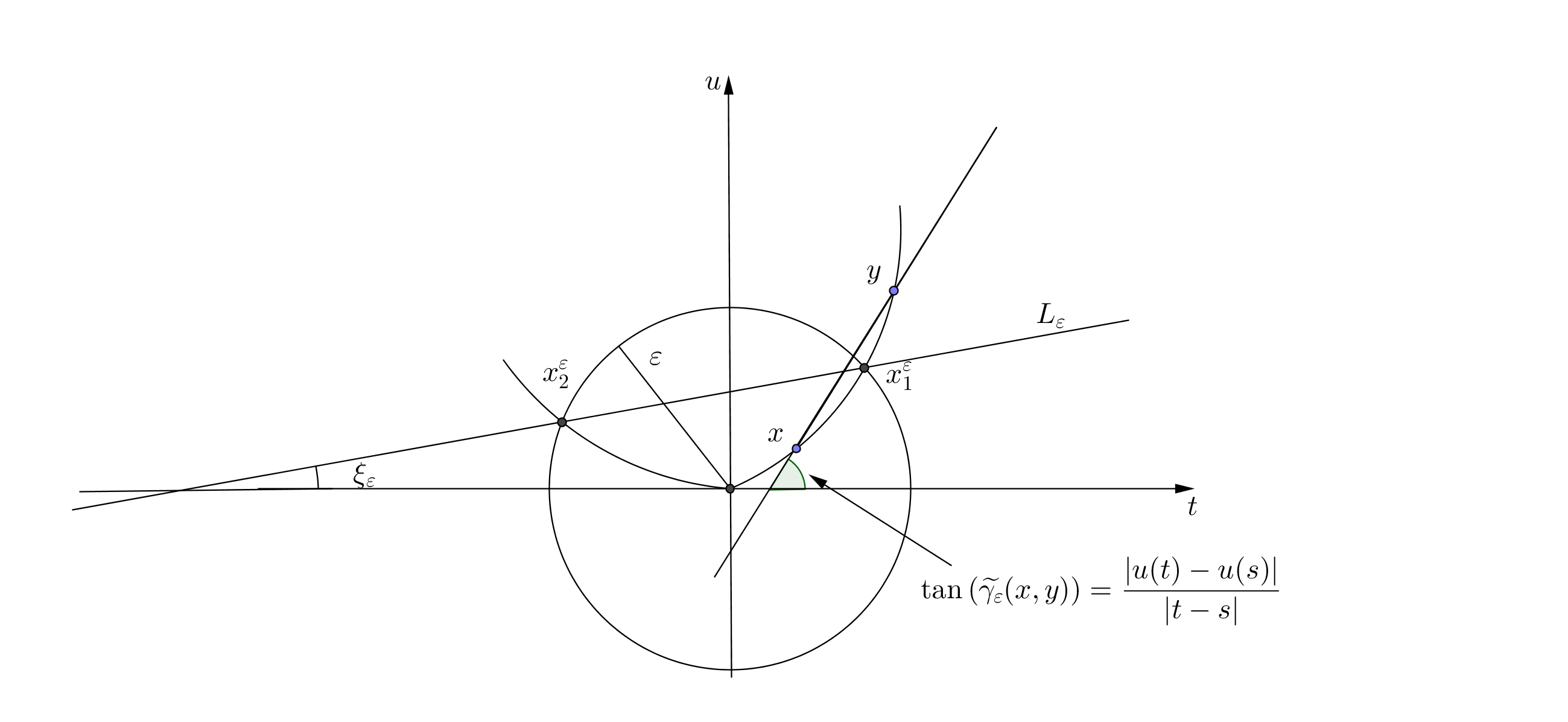}
			\caption{}\label{fig5}
			\end{figure}
			Since $u(t_2^\eps)\les \eps^{1+\beta}$, and $t^2_\eps\les \eps$, we obtain 
			\[\xi_\eps\sim \tan \xi_\eps \les \frac{\eps^{1+\beta}}{\eps}= \eps^\beta,\]
			proving \eqref{betaeps}. This concludes the proof of \eqref{estimgammaxy}.
\end{proof}
We pass now to the main result of this section.	
	\begin{theorem} \label{mainregTh}
		Every  minimizer of \eqref{minLambda} is  $C^{1,1}$. Moreover, for every $Q_0$ and every $Q\le Q_0$, the $C^{1,1}$ character of $\partial E$ depends only on $Q_0$, the Lipschitz character of $\partial E$ and $\|f\|_{L^p(\partial E)}$. 
	\end{theorem}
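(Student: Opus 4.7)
\medskip

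\noindent \textbf{Plan.} The strategy is to compare $E$ with the competitor $E_\eps := E \cap H_\eps^-$ and to derive the quantitative bound $\gamma_\eps \les \eps$, so that Lemma \ref{geomreglemma} applied with $\beta=1$ immediately yields $\partial E \in C^{1,1}$ with a uniform estimate. Applying the minimality of $E$ in \eqref{minLambda} against $E_\eps$ (note $|E_\eps|=1-\Delta V <1$) gives the fundamental inequality
\[
\Delta P \le \Lambda\, \Delta V + Q^2 \Delta \I_0.
\]
The arc-versus-chord computation in the isosceles triangle $x_1^\eps x_0 x_2^\eps$ yields $\Delta P \ges \eps\, \gamma_\eps^2$, while containing the cap between the chord $L_\eps$ and a wedge of apex $x_0$ gives $\Delta V \les \eps^2\, \gamma_\eps$. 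The main effort therefore goes into the upper bound on $\Delta \I_0$.

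For this I would use the push-forward $\mu_\eps := \Pi_\sharp f$ as an admissible competitor for $\I_0(E_\eps)$. Since $\Pi$ is the identity outside $\Sigma_\eps$ and is a $1$-Lipschitz orthogonal projection on $\Sigma_\eps$, this produces
\[
\Delta \I_0 \le \int\!\!\int_{\partial E\times \partial E}\log\frac{|x-y|}{|\Pi(x)-\Pi(y)|}\, f(x)f(y)\, d\H^1(x)\, d\H^1(y),
\]
whose integrand vanishes unless at least one argument lies in $\Sigma_\eps$. I would split the remaining domain into a diagonal piece $\Sigma_\eps\times\Sigma_\eps$, where the contraction of $\Pi$ is controlled by the (cosine of the) local slope and so the log is bounded by the square of an angle, and cross pieces $\Sigma_\eps\times\Sigma_\eps^c$, where $|\Pi(x)-y|\ge |x-y|-O(\eps)$ makes the log of order $\eps/|x-y|$. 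The $L^p(\partial E)$ bound of Theorem \ref{estimregmuconv} combined with $\H^1(\Sigma_\eps)\sim \eps$ and H\"older give $\int_{\Sigma_\eps} f \les \eps^{1-1/p}$; the cross piece is integrable against $f\, d\H^1$ uniformly. A first run of these estimates produces $\Delta \I_0 \les \eps^\sigma$ for some $\sigma>1$, so combined with $\Delta P\ges \eps\gamma_\eps^2$ and $\Delta V\les \eps^2\gamma_\eps$ one gets $\gamma_\eps\les \eps^{\beta_0}$ for some $\beta_0>0$, and Lemma \ref{geomreglemma} produces an initial regularity $\partial E\in C^{1,\beta_0}$.

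I would then bootstrap. By Lemma \ref{regmu} the $C^{1,\beta_0}$ regularity upgrades $f$ to $L^\infty(\partial E)$ (in fact $C^{0,\beta_0}$), while Lemma \ref{geomlemma2} refines the slope control to $\gamma_\eps(x,y)\les \eps^{\beta_0}+|x-y|^{\beta_0}$. Re-running the $\Delta \I_0$ estimate with these sharper inputs brings the diagonal contribution down to $\eps^2\gamma_\eps$ (plus terms which can be absorbed on the left-hand side of the minimality inequality), and the fundamental inequality now reads $\eps\gamma_\eps^2\les (\Lambda+Q^2)\, \eps^2\gamma_\eps$, whence $\gamma_\eps \les \eps$ and $\Delta V\les \eps^3$. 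Lemma \ref{geomreglemma} with $\beta=1$ then concludes, and keeping track of constants along the way gives the claimed dependence on $Q_0$, the Lipschitz character of $\partial E$, and $\|f\|_{L^p(\partial E)}$. The main obstacle is precisely the $\Delta \I_0$ estimate: the logarithmic kernel mixes all scales, so one has to track simultaneously the local slope (through $\gamma_\eps(x,y)$) and the density $f$ (through Theorem \ref{estimregmuconv} and its $C^{0,\beta_0}$ upgrade) in order for the bootstrap to close precisely at the exponent $\beta=1$.
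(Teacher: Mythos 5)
Your proposal follows essentially the same route as the paper's own proof: the same competitor $E_\eps=E\cap H_\eps^-$, the same volume/perimeter comparisons $\Delta V\sim\eps^2\gamma_\eps$ and $\Delta P\ges\eps\gamma_\eps^2$, the same push-forward measure $\Pi_\sharp f$ to bound $\Delta\I_0$ with the diagonal/cross splitting and H\"older against $\|f\|_{L^p}$, and the same two-stage bootstrap via Lemma~\ref{regmu} ($L^\infty$ upgrade of $f$) and Lemma~\ref{geomlemma2} (the $\gamma_\eps(x,y)\les\eps^\beta+|x-y|^\beta$ refinement), closing with Lemma~\ref{geomreglemma}. The only place where your account is a bit quicker than the argument actually is: re-running the estimates with the initial exponent $\beta_0$ plugged into Lemma~\ref{geomlemma2} gives $I_1\les\eps^{2+2\beta_0}$ for the diagonal piece, so the resulting dichotomy yields $\gamma_\eps\les\eps^{\min(1,\,1/2+\beta_0)}$ rather than $\gamma_\eps\les\eps$ outright when $\beta_0<1/2$; the paper handles this by first using the $L^\infty$ bound alone to reach $\beta=1/2$ (with a harmless $|\log\eps|$ loss in $I_2^N$) and then invoking Lemma~\ref{geomlemma2} with $\beta=1/2$ to finish, so one should expect two bootstrap passes, not one. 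This is a minor accounting issue, not a gap: the iteration you describe does close in finitely many steps.
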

	\begin{proof}
Let $E$ be a minimizer of \eqref{minLambda}, $x_0\in \partial E$ be fixed and let $\eps\le \eps_0$. With the above notation in force, we begin by observing that using $E_\eps$ as a competitor, by minimality of $E$ for \eqref{minLambda}, we have
\begin{equation}\label{mainestimreg}
Q^2 \Delta \Iz\ge \Delta P-\Lambda \Delta V.
\end{equation}
We are thus going to estimate $\Delta \Iz$,  $\Delta P$ and  $\Delta V$ in terms of $\eps$ and $\gamma_\eps$. This will give us  a quantitative decay estimate for  $\gamma_\eps$. This in turn, in light of \eqref{DeltaV} below and Lemma \ref{geomreglemma}, will provide the desired regularity of $E$.
	 
	 \medskip
	
				{\it Step 1 (Volume estimate)}: In this first step, we prove that  
				
				\begin{equation}\label{DeltaV}
				\Delta V\sim \eps^2 \gamma_\eps \,. 
				\end{equation}			
		
\noindent		By construction, we have $\Delta V=|E|-|E_\eps|=|E\cap H_\eps^+|$.  By convexity, we first have that  the triangle with vertices $x_0,\xu,\xd$ is contained inside $E\cap H_\eps^+$. By convexity again, letting  ${\bar x_1}^\eps$ be the point of $\partial B_\eps(x_0)$ diametrically opposed to $\xu$  (and similarly for ${\bar x_2}^\eps$),  
	we get that $E\cap H_\eps^+$ is contained in the union of the triangles of vertices $x_2^\varepsilon,x_1^\varepsilon, {\bar x_1}^\varepsilon$ and $x_1^\varepsilon,x_1^\varepsilon, {\bar x_2}^\varepsilon$ (see Figure \ref{fig3}).
\begin{figure}
\includegraphics[scale=1]{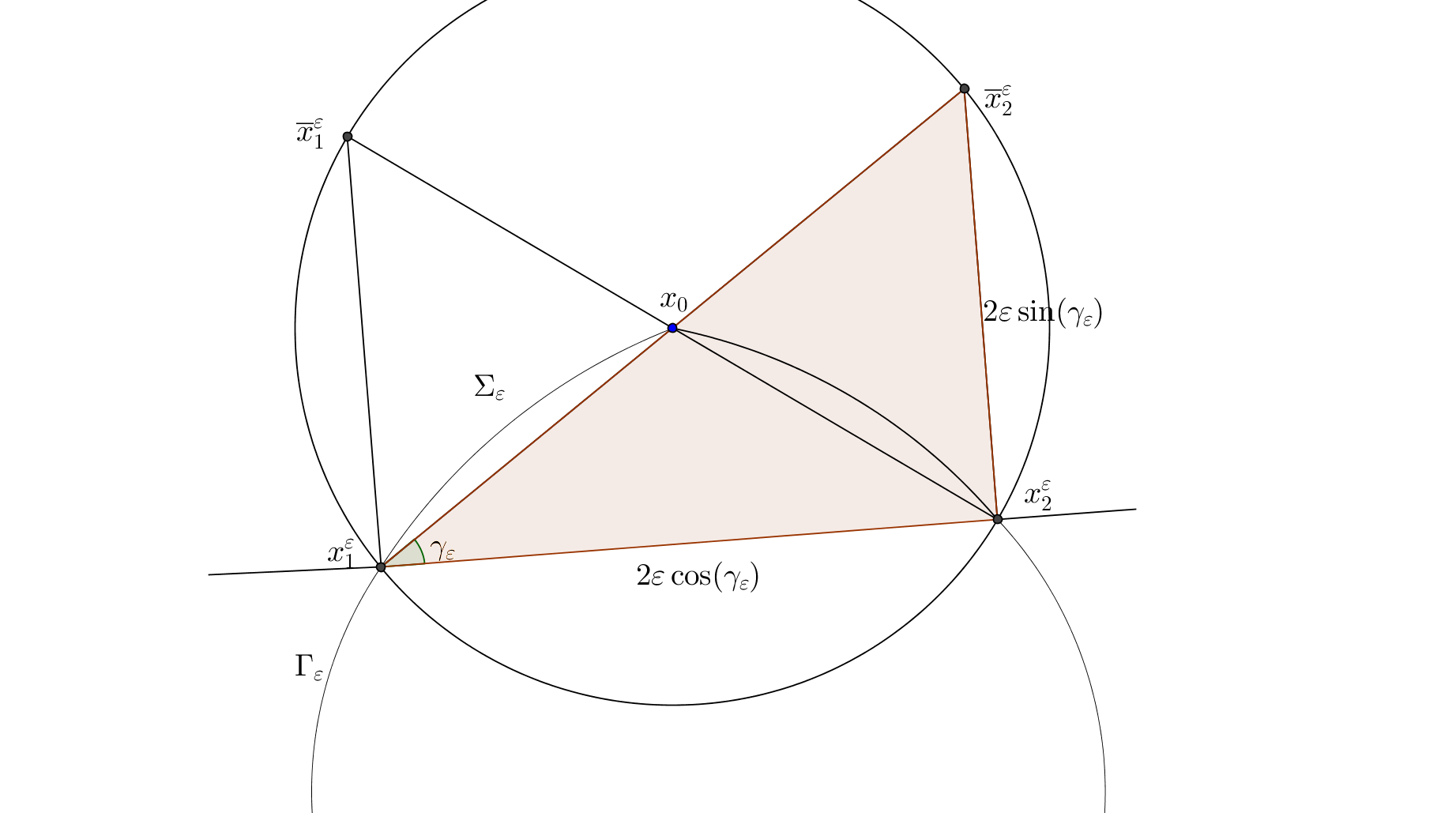}
	\caption{$\Delta V$ is contained in the union of the triangles of vertices $x_1^\varepsilon,x_2^\varepsilon, {\bar x_1}^\varepsilon$ and $x_1^\varepsilon,x_2^\varepsilon, {\bar x_2}^\varepsilon$.}\label{fig3}
\end{figure}			
		
	
\noindent
		Therefore, we obtain
		\[
		\Delta V\sim\ \eps^2 \cos \gamma_\eps \sin \gamma_\eps \sim  \varepsilon^2\gamma_\eps.
		\]

	\medskip
	
	{\it	Step 2 (Perimeter estimate):} Since the triangle with vertices $x_0,\xu,\xd$ is contained inside $E\cap H_\eps^+$, it holds  
		\begin{equation}\label{DeltaP}
		\Delta P=P(E)-P(E_\varepsilon)\ge 2\varepsilon\left(1-\cos \gamma_\eps\right)\ges \varepsilon\gamma_\eps^2.
		\end{equation}
		\medskip
		
	{	\it	Step 3 (Non-local energy estimate):}	We now estimate $\Delta \Iz$. Since $\mu_\eps$ is a competitor for $\Iz(E_\varepsilon)$,  recalling that $\Pi$ is the identity outside $\Sigma_\eps$, we have
		\[
		\begin{aligned} 
		\Delta\Iz&=\Iz(E_\varepsilon)-\Iz(E)\\
		&\le \int_{\partial E_\varepsilon\times\partial E_\varepsilon}f_\varepsilon(x)f_\varepsilon(y)\log\left(\frac{1}{|x-y|} \right)-\int_{\partial E\times\partial E}f(x)f(y)\log\left(\frac{1}{|x-y|} \right)\\
		&=\int_{\partial E\times\partial E}f(x)f(y)\log\left(\frac{1}{|\Pi(x)-\Pi(y)|} \right)-\int_{\partial E\times\partial E}f(x)f(y)\log\left(\frac{1}{|x-y|} \right)\\
		&= \int_{\partial E\times\partial E}f(x)f(y)\log\left(\frac{|x-y|}{|\Pi(x)-\Pi(y)|} \right).
		\end{aligned}
		\]
		Since for $x,y\in \Sigma_{\eps}^c$, $|\Pi(x)-\Pi(y)|=|x-y|$, 
		\[
		\begin{aligned}
		\Delta\Iz&\le  \int_{{\Sigma_{3\varepsilon}}\times{\Sigma_{3\varepsilon}}}f(x)f(y)\log\left(\frac{|x-y|}{|\Pi(x)-\Pi(y)|}\right) \\
		&+2\int_{\Sigma_\varepsilon}\int_{\Gamma_\varepsilon}f(x)f(y)\log\left(\frac{|x-y|}{|\Pi(x)-y|}\right) \\
		&=:I_1+2I_2.
		\end{aligned}
		\]
		We first estimate $I_1$:
		\[
		\begin{aligned}
		I_1&=\int_{{\Sigma_{3\varepsilon}}\times{\Sigma_{3\varepsilon}}}f(x)f(y)\log\left(1+\frac{|x-y|-|\Pi(x)-\Pi(y)|}{|\Pi(x)-\Pi(y)|}\right)\\
		&\le \int_{{\Sigma_{3\varepsilon}}\times{\Sigma_{3\varepsilon}}}f(x)f(y)\frac{|x-y|-|\Pi(x)-\Pi(y)|}{|\Pi(x)-\Pi(y)|}.
		\end{aligned}
		\]
		Since for any $x,y\in{\Sigma_{3\varepsilon}}$ we have (with equality if $x,y\in \Sigma_\eps$),
		\[
		\cos(\gamma_\eps(x,y))|x-y|\le |\Pi(x)-\Pi(y)|,
		\]
		we get 
		
		\begin{equation}\label{I1prep}
		I_1\le\int_{{\Sigma_{3\varepsilon}}\times{\Sigma_{3\varepsilon}}}f(x)f(y) \left(\frac{1}{\cos(\gamma_\eps(x,y))}-1\right)\les \int_{{\Sigma_{3\varepsilon}}\times{\Sigma_{3\varepsilon}}} \gamma_\eps^2(x,y) f(x)f(y). 
		\end{equation}
	Using then H\"older's inequality (recall that $f\in L^p(\partial E)$ for some  $p>2$) to get
		\begin{equation}\label{Holderf}
		\int_{{\Sigma_{3\varepsilon}}} f\le \lt(\int_{{\Sigma_{3\varepsilon}}} f^p\rt)^{1/p} \H^1({\Sigma_{3\varepsilon}})^{\frac{p-1}{p}}\les \eps^{ \frac{p-1}{p}},
		\end{equation}
		and $\gamma_\eps(x,y)\les 1$, we obtain
		
		\begin{equation}\label{estimI1}
		I_1\les \eps^{2\frac{p-1}{p}}. 
		\end{equation}
\noindent
		We can now estimate $I_2$:
		\[
		\begin{aligned}
		I_2&=\int_{\Sigma_\varepsilon}\int_{\Gamma_\varepsilon}f(x)f(y)\log\left(1+\left(\frac{|x-y|-|\Pi(x)-y|}{|\Pi(x)-y|}\right)\right)\\
		&\le\int_{\Sigma_\varepsilon}\int_{\Gamma_\varepsilon}f(x)f(y)\left(\frac{|x-y|-|\Pi(x)-y|}{|\Pi(x)-y|}\right).
		\end{aligned}
		\]
Denote by $z$ the projection of $\Pi(x)$ on the line containing $x$ and $y$. Then, since the projection is a $1$-Lipschitz function, it holds $|z-y|\le |\Pi(x)-y|$. Thus,
		\[|x-y|-|y-\Pi(x)|=|x-z|+ |z-y|-|y-\Pi(x)|\le |x-z|.\]
	Arguing as in {\it Step 1}, we get $|x-\Pi(x)|\le |\overline{x}_2^\eps-x_2^\eps|\les \eps\gamma_\eps$. Furthermore, the angle $\widehat{z\Pi(x)x}$ equals $\gamma_\eps(x,y)$ (see Figure \ref{fig4}), so that 
		\begin{figure}
			\includegraphics[scale=0.6]{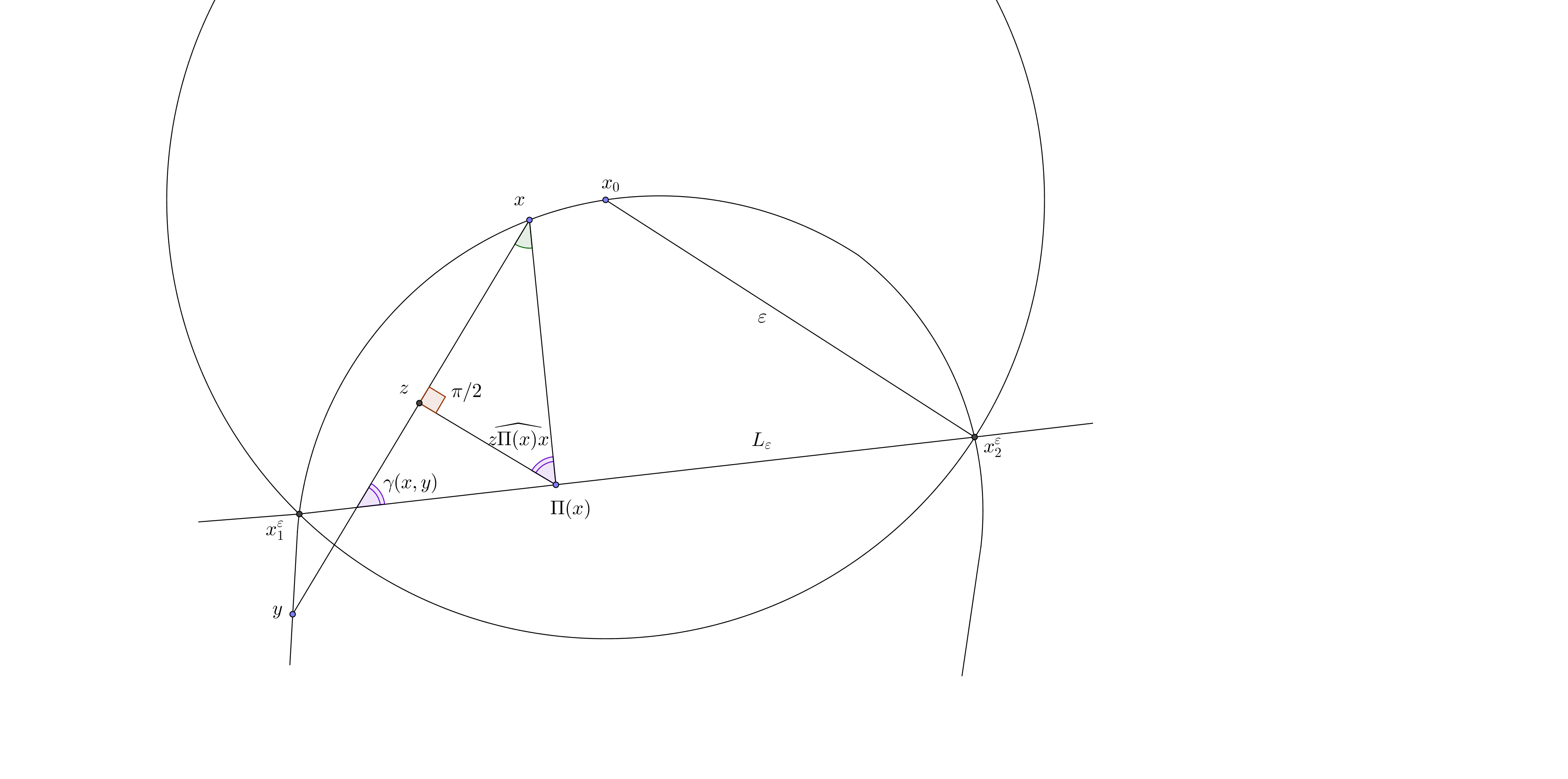}
			\caption{The angle $\widehat{z\Pi(x)x}$ equals $\gamma(x,y)$.}\label{fig4}
		\end{figure}
		\[
		|x-y|-|y-\Pi(x)|\le |x-z|=|x-\Pi(x)|\sin(\gamma_\eps(x,y))\lesssim \varepsilon \gamma_\eps\gamma_\eps(x,y). 
		\] 
		On the other hand, since $|y-x|\ge 2\eps$ (indeed $|x-x_0|\le\varepsilon$ and $|y-x_0|\ge3\varepsilon$), we have 
		\[
		 |y-\Pi(x)|\ge |y-x|-|x-\Pi(x)|\ges |y-x|-\eps \ges |y-x|.
		\]

\noindent		 Therefore,
		\begin{equation}\label{estimI2}
		I_2\lesssim \varepsilon\gamma_\eps \int_{\Sigma_\varepsilon}\int_{\Gamma_\varepsilon}  \frac{f(x)f(y)\gamma_\eps(x,y)}{|y- x|}.
		\end{equation}
		There exists $M>0$ which depends only on the Lipschitz character of $\partial E$ such that for $x\in \Sigma_\eps$ and  $y\in \Gamma_\eps\cap B_M(x_0)$,
		\[|y- x|\ge \min_{i=1,2} |y-{x}_i^\eps|.\]

\noindent		Let $\Gamma_\eps^N:=\Gamma_\eps\cap B_M(x_0)$ and $\Gamma_\eps^F:=\Gamma_\eps\cap B^c_M(x_0)$. We then have 
		\begin{align*}I_2&\lesssim \varepsilon\gamma_\eps \lt( \int_{\Sigma_\varepsilon\times \Gamma_\eps^N}  \frac{f(x)f(y)\gamma_\eps(x,y)}{\min_{i}|y- {x}_i^\eps|} + \int_{\Sigma_\varepsilon\times \Gamma_\eps^F}  f(x)f(y)\gamma_\eps(x,y)\rt)\\
		&=:I_2^N+I_2^F.
		\end{align*}
		We begin by estimating $I_2^F$. Since $\gamma_\eps(x,y)\les 1$, using H\"older's inequality we find
			\begin{equation}\label{estimIF}
			\begin{aligned} 
			I_2^F&\les\varepsilon\gamma_\varepsilon\left(\int_{\Gamma_\varepsilon}f\right)\left(\int_{\Sigma_\varepsilon}f\right)\\
			&\le \varepsilon\gamma_\varepsilon\|f\|_{L^p}\H^1(\Gamma_\varepsilon)^{1-\frac1p}\|f\|_{L^p}\H^1(\Sigma_\varepsilon)^{1-\frac1p}\\
			&\les \varepsilon\gamma_\varepsilon\H^1(\Sigma_\varepsilon)^{1-\frac1p}\\
			&\les \varepsilon^{2-\frac1p} \gamma_\eps.			
			\end{aligned} 
			\end{equation}
			
We can now estimate $I_2^N$. Recall that
\begin{equation}\label{definIN}
I_2^N:=\varepsilon\gamma_\eps\int_{\Sigma_\varepsilon\times \Gamma_\eps^N}  \frac{f(x)f(y)\gamma_\eps(x,y)}{\min_{i}|y- {x}_i^\eps|}. 
\end{equation}
As before, we use $\gamma_\eps(x,y)\les 1$ together with H\"older's inequality applied twice to get 
\[
\int_{\Sigma_\varepsilon\times \Gamma_\eps^N}  \frac{f(x)f(y)\gamma_\eps(x,y)}{\min_{i}|y- {x}_i^\eps|}\les \eps^{1-1/p}\lt(\int_{\Gamma_\eps^N} \frac1{\min_{i}|y- {x}_i^\eps|^{p/(p-1)}}\rt)^{(p-1)/p}.
\] 
Since $E$ is convex, its boundary can be locally parametrized by Lipschitz functions  so that, if $M$ is small enough (depending only on the Lipschitz regularity of $\partial E$), then for $y\in \Gamma_\eps^N$, $\min_i \ell(y,\tilde{x}_i^\eps) \sim \min_{i}|y- \tilde{x}_i^\eps|$ (where $\ell(x,y)$ denotes the geodesic distance  on $\partial E$). From this we get
\[\int_{\Gamma_\eps^N} \frac1{\min_{i}|y- {x}_i^\eps|^{p/(p-1)}}\les \eps^{-1/(p-1)}.\]
From this we conclude that 
\begin{equation}\label{estimIN}
I_2^N\les \gamma_\eps \eps^{2-\frac{2}{p}}.
\end{equation}
\medskip

					{\it Step 4 ($C^{1,\beta}$ regularity):} We now prove that $E$ has boundary of class $C^{1,\beta}$. To this aim, we can  assume that $\Delta V\ll \Delta P$. Indeed, if $\Delta V\ges \Delta P$, thanks to \eqref{DeltaV} and \eqref{DeltaP}, we would get $\gamma_\eps\les \eps$ and thus $\Delta V\les \eps^3$, which by Lemma \ref{geomreglemma} would  already ensure the $C^{1,1}$ regularity of $\partial E$. Using \eqref{mainestimreg}, \eqref{DeltaP}, \eqref{estimI1}, \eqref{estimIF} and \eqref{estimIN}, we get
					\begin{equation}\label{formulastep4}
					Q^2 (\eps^{1- \frac{2}{p}}+\gamma_\eps (\eps^{1-\frac{1}{p}}+\eps^{1-\frac{2}{p}}))\ges \gamma_\eps^2.
					\end{equation}
					Now since $\eps^{1-\frac{1}{p}}\les \eps^{1-\frac{2}{p}}$, this reduces further to 
					\begin{equation}\label{formulastep4bis}
					Q^2 (\eps^{1- \frac{2}{p}}+\gamma_\eps \eps^{1-\frac{2}{p}})\ges \gamma_\eps^2.
					\end{equation}
					We can now distinguish two cases. Either $Q^2 \eps^{2(\frac{1}{2}- \frac{1}{p})}\ges \gamma_\eps^2$ and then
					$\gamma_\eps\les Q\eps^{(\frac12- \frac{1}{p})}$ or $Q^2\gamma_\eps \eps^{1-\frac{2}{p}}\ges \gamma_\eps^2$ and then
					$\gamma_\eps\les Q^2 \eps^{1-\frac{2}{p}}$. Thus in both cases, since $p>2$, we find $\gamma_\eps\les Q\varepsilon^\beta$ for some $\beta>0$ and we can conclude, by means of \eqref{DeltaV} and Lemma \ref{geomreglemma}, that $\partial E$ is $C^{1,\beta}$.

%
%
%
		\medskip
		
		{\it Step 5 ($C^{1,1}$ regularity):} Thanks to Lemma \ref{regmu}, we get that $f\in L^{\infty}$ with $\|f\|_{L^\infty}$ depending only on the Lipschitz character of $\partial E$ and on $\|f\|_{L^p}$. Using this new information,  we can improve \eqref{estimI1}, \eqref{estimIF} and \eqref{estimIN} to
		\begin{equation}\label{I3} I_1\les \eps^{2}, \quad I_2^F \les \gamma_\eps \eps^2, \quad \textrm{and} \quad I_2^N\les \gamma_\eps \eps^2 |\log \eps|.
		\end{equation} 
		Arguing as in {\it Step 4}, we find $\gamma_\eps \les Q \eps^{1/2}$
%
and thus $\partial E$ is of class $C^{1,1/2}$. In order to get higher regularity, we need to get a better estimate on $\gamma_\eps(x,y)$.

		Going back to \eqref{I1prep} and using  \eqref{estimgammaxy}  with $\beta=1/2$, we find the improved estimate
		\begin{equation}\label{I1best}
		I_1\les \eps^3.
		\end{equation}
		If we also use \eqref{estimgammaxy} in \eqref{definIN}, we obtain
		\begin{align*}
		I_2^N&\les \eps \gamma_\eps \int_{\Sigma_\eps\times \Gamma^N_\eps} \frac{\eps^{1/2}+ |x-y|^{1/2}}{\min_i |y-\tilde{x}_i^\eps|}
		\\
		&\les \eps \gamma_\eps \int_{\Sigma_\eps\times \Gamma^N_\eps} \frac{\eps^{1/2}+\min_i\{ |x-\tilde{x}_i^\eps|^{1/2}+ |y-\tilde{x}_i^\eps|^{1/2}\}}{\min_i |y-\tilde{x}_i^\eps|}\\
		&\les \eps \gamma_\eps \int_{\Sigma_\eps\times \Gamma^N_\eps} \frac{\eps^{1/2}+\min_i |y-\tilde{x}_i^\eps|^{1/2}}{\min_i |y-\tilde{x}_i^\eps|}\\
		&\les \eps^{2}\gamma_\eps\int_{ \Gamma^N_\eps} \frac{\eps^{1/2}}{\min_i |y-\tilde{x}_i^\eps|}+\frac{1}{\min_i |y-\tilde{x}_i^\eps|^{1/2}}\\
		&\les \eps^{2}\gamma_\eps (\eps^{1/2} |\log \eps| +1) \les \eps^2\gamma_\eps.
		\end{align*}
As in the beginning of Step 4, we can assume that  $\Delta V\ll\Delta P$, so that 
by \eqref{mainestimreg} and \eqref{DeltaP} we have  
$Q^2 \Delta \mathcal I_0\ges\Delta P\ges \varepsilon\gamma_\varepsilon^2$. 
By the previous estimate for $I_2^N$, \eqref{I1best} and the second inequality in \eqref{I3} we eventually get
\[
Q^2 \varepsilon^2\gamma_\varepsilon\sim Q^2(\varepsilon^3+\varepsilon^2\gamma_\varepsilon)\ges \varepsilon\gamma_\varepsilon^2,
\]
which leads to $\gamma_\eps\les Q^2 \eps$. By using again Lemma \ref{geomreglemma}, the proof is concluded.		
		\end{proof}
		
\section{Minimality of the ball for $N=2$ and $Q$ small}\label{secball}

We now use the regularity result obtained in Section \ref{secreg} to prove that for small charges, the only minimizers of $\mathcal F_{Q,0}$ in dimension two are balls.
\begin{theorem}\label{thm:minimalitapalla}
Let $N=2$ and $\alpha=0$.
There exists $Q_0>0$ such that for $Q<Q_0$, up to translations, the only   minimizer of \eqref{problema}  is the ball.	
\end{theorem}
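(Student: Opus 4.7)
The plan is to argue by contradiction, combining the uniform $C^{1,1}$ control from Theorem \ref{mainregTh} with the quantitative stability of the ball established in \cite{gnrI}. Suppose there exists a sequence $Q_n\to 0$ and associated minimizers $E_n$ of $\mathcal F_{Q_n,0}$ with $|E_n|=1$ which, up to translation, are not balls. First I would show that, up to a further translation and extraction, $E_n$ converges in the Hausdorff topology to a unit ball $B$. The diameters of $E_n$ are uniformly bounded (arguing as in the proof of Theorem \ref{esistenza}), so Blaschke's selection theorem provides a Hausdorff-converging subsequence to some convex body $E_\infty$ with $|E_\infty|=1$. Since $Q_n\to 0$ and $\Iz(E_n)$ is uniformly bounded (via Lemma \ref{john} and the uniform diameter bound), passing to the liminf in the minimality inequality $P(E_n)+Q_n^2\Iz(E_n)\le P(B)+Q_n^2\Iz(B)$ yields $P(E_\infty)\le P(B)$, so by the classical isoperimetric inequality $E_\infty$ is a unit ball, which up to translation I may take to be $B$.

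Next I would upgrade this Hausdorff convergence to $C^{1,\alpha}$ convergence for every $\alpha<1$. Since $E_n\to B$ and $B$ is smooth, Theorem \ref{estimregmuconv} applies to the sequence and yields, for arbitrarily large $p$, a uniform bound $\|f_n\|_{L^p(\partial E_n)}\le M$ with $M$ depending only on $B$. Moreover, because the $E_n$ are convex and Hausdorff-converge to the smooth set $B$, the Lipschitz character of $\partial E_n$ is eventually uniformly controlled. Theorem \ref{mainregTh} then provides a uniform $C^{1,1}$ bound on $\partial E_n$, and by Arzelà–Ascoli applied to local graph parametrizations of the boundaries, $E_n\to B$ in $C^{1,\alpha}$ for every $\alpha<1$.

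To conclude, I would invoke the stability result from \cite{gnrI}: for $Q$ small enough, the ball is a strict local minimizer of $\mathcal F_{Q,0}$ under volume constraint, in a sufficiently strong topology (in particular $C^{1,\alpha}$ for some $\alpha<1$). For $n$ large, the set $E_n$ lies in such a $C^{1,\alpha}$-neighborhood of $B$, has $|E_n|=1$, and satisfies $\mathcal F_{Q_n,0}(E_n)\le \mathcal F_{Q_n,0}(B)$; the local minimality then forces $E_n=B$ up to translation, contradicting our assumption. The main subtlety in the plan is matching topologies: one has to check that the stability statement available from \cite{gnrI} is formulated in a topology at least as weak as $C^{1,\alpha}$ so that our uniform $C^{1,1}$ compactness is strong enough to trigger it; once this is in place, the rest is essentially a soft compactness and lower-semicontinuity argument.
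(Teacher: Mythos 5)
Your proposal follows essentially the same route as the paper's proof: Hausdorff convergence of the minimizers to the ball as $Q\to 0$, promotion to uniform $C^{1,1}$ regularity via Theorem~\ref{mainregTh}, and then a stability result from \cite{gnrI} to conclude. There are two small but worth-noting deviations. First, you obtain the Hausdorff convergence by soft compactness (Blaschke selection plus lower semicontinuity), whereas the paper extracts it quantitatively from the sharp quantitative isoperimetric inequality, giving the explicit rate $|E_Q\Delta B|^2\les Q^2$; both work, the paper's version is just more informative. Second, and more substantively, you invoke an abstractly phrased ``strict local minimality of the ball in a $C^{1,\alpha}$-topology,'' flagging that one must check \cite{gnrI} actually delivers such a statement. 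The paper does not use local minimality as such: it combines the minimality inequality $P(E_Q)-P(B)\le Q^2(\Iz(B)-\Iz(E_Q))$ with the specific quantitative comparison from \cite[Proposition~6.3]{gnrI}, namely $\|\mu_Q\|_{L^\infty}^2\,(P(E_Q)-P(B))\ges \Iz(B)-\Iz(E_Q)$, and then absorbs to get $P(E_Q)=P(B)$ for small $Q$. To trigger that comparison one needs a uniform $L^\infty$ bound on the equilibrium measures $\mu_Q$, which the paper obtains from Lemma~\ref{regmu} (H\"older regularity of $\mu_Q$ on uniformly $C^{1,\beta}$ boundaries); your plan implicitly needs this too but doesn't name it. So your outline is correct, but to close it you should replace the loosely stated ``local minimality'' step with the explicit comparison inequality from \cite[Proposition~6.3]{gnrI} plus the uniform $L^\infty$ control on $\mu_Q$ coming from Lemma~\ref{regmu}.
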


\begin{proof}
Let $E_Q$ be a minimizer of $\mathcal F_{Q,0}$ and let $B$ be a ball of measure one. By minimality  of $E_Q$, we have
\begin{equation}\label{stimadeltaP}
P(E_Q)-P(B)\le Q^2\left( \Iz(B)-\Iz(E_Q)\right)\le Q^2\left( \Iz(B)+|\Iz(E_Q)|\right).
\end{equation}
By Lemma \ref{lemrego2} the diameter of $E_Q$ is uniformly bounded and so is $|\Iz(E_Q)|$.
Using  the quantitative isoperimetric inequality (see \cite{fumapr}), we infer
\[
 |E_Q\Delta B|^2\les P(E_Q)-P(B) \le Q^2\left( \Iz(B)+|\Iz(E_Q)|\right).
\]
This implies that $E_Q$ converges to $B$ in $L^1$  as $Q\to 0$. From the convexity of $E_Q$, this implies the convergence also in the Hausdorff metric. Since the sets $E_Q$ are all uniformly bounded and of fixed volume, they are uniformly  Lipschitz. 
Theorem \ref{mainregTh} then implies that  $\partial E_Q$ are  $C^{1,1}-$regular sets with $C^{1,1}$ norm  uniformly bounded. 
Therefore, thanks to the Arzel\`a-Ascoli's Theorem, we can write
\[
\partial E_Q=\left\{(1+\varphi_Q(x))x:\,x\in\partial B \right\},
\]
with $\|\varphi_Q\|_{C^{1,\beta}}$ converging to $0$ as $Q\to0$ for every $\beta<1$. From Lemma \ref{regmu}
  we infer that the optimal measures  $\mu_Q$ for $E_Q$  are uniformly $C^{0,\beta}$ and in particular are uniformly bounded. 
Using now \cite[Proposition 6.3]{gnrI}, we get that for small enough $Q$,
\[
 \|\mu_Q\|_{L^{\infty}}^2 \left(P(E_Q)-P(B)\right)\ges   \Iz(B)-\Iz(E_Q)
\]
Putting this into \eqref{stimadeltaP}, we then obtain
\[P(E_Q)-P(B)\les Q^2(P(E_Q)-P(B) )\]
from which we deduce that for $Q$ small enough, $P(E_Q)=P(B)$. Since, up to translations, the ball is the unique solution of the isoperimetric problem, this implies $E_Q=B$.
\end{proof}

\section{Asymptotic behavior as $Q\to +\infty$}\label{seclim}

In this section we characterize the limit shape of (suitably rescaled)
minimizers of $\FaQ$, with $\alpha\in [0,1]$, as the charge $Q$ 
tends to $+\infty$. For this, we fix a sequence $Q_n\to +\infty$.

\subsection{The case $\alpha\in [0,1)$}

For $n\in \N$, we let $V_n:=Q_n^{-\frac{2N(N-1)}{1+(N-1)\alpha}}$ (so that $V_n\to0$ as $n\to +\infty$) and 
\begin{eqnarray*}
\A_{n,\alpha} &:=& \left\{ E\subset\R^N\ \text{convex body,}
\ |E|=V_n\right\},
\\
\wFaQ(E) &:=& V_n^{-\frac{N-2}{N-1}}P(E)+\I_\alpha(E)
\qquad \textrm{ for } E\in\A_{n,\alpha}.
\end{eqnarray*}
\noindent
It is straightforward to check that if $E$ is a minimizer of  \eqref{problema}, 
then the rescaled set $$\widehat E:= Q_n^{-\frac{2(N-1)}{1+(N-1)\alpha}}\,E$$ 
is a minimizer of $\wFaQ$ in the class $\A_{n,\alpha}$. 

We begin with a compactness result for a sequence of sets
of equibounded energy.

\begin{proposition}\label{procomp}
Let $\alpha\in [0,1)$ and let $E_n\in \A_{n,\alpha}$ be
such that 
$$\sup_n \wFaQn(E_n)<+\infty.$$
Then, up to extracting a subsequence and up to rigid motions, 
the sets $E_n$ converge in the Hausdorff topology to the 
segment $[0,L]\times\{0\}^{N-1}$, for some $L\in (0,+\infty)$.
\end{proposition}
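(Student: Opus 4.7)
The strategy is to reduce $E_n$ to its John parallelepiped and exploit the scaling estimates of Lemma \ref{phi} to show that exactly one side length stays of order one while all others vanish as $V_n \to 0$.

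First, I would apply Lemma \ref{john} to obtain, for each $n$ and up to a rigid motion, a parallelepiped $\mathcal R_n = \prod_{i=1}^N [0, \lambda_i^n]$ such that $\mathcal R_n \subseteq E_n \subseteq C_N \mathcal R_n$, with $|E_n|\sim|\mathcal R_n|$, $P(\mathcal R_n) \sim P(E_n)$, and either $\Ia(\mathcal R_n) \sim \Ia(E_n)$ for $\alpha > 0$ or $e^{-\Iz(\mathcal R_n)} \sim e^{-\Iz(E_n)}$ for $\alpha = 0$. The hypothesis $\sup_n \wFaQn(E_n) < +\infty$ then transfers, up to multiplicative constants, to a uniform bound on $\wFaQn(\mathcal R_n)$.

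Next, I set $\Phi_n := V_n^{-(N-2)/(N-1)} P(\mathcal R_n)$ and derive a two-sided bound on $\lambda_{i_{\max}}^n := \max_i \lambda_i^n$. For $\alpha \in (0,1)$ both $\Phi_n$ and $\Ia(\mathcal R_n)$ are nonnegative, so each is separately bounded by the energy bound. For $\alpha = 0$, the bound $\Iz(\mathcal R_n) \ges -\log \lambda_{i_{\max}}^n$ used inside the proof of Lemma \ref{phi}, together with the polynomial lower estimate $\Phi_n \ges (\lambda_{i_{\max}}^n)^{1/(N-1)}$ coming from the same lemma, rules out $\Phi_n \to +\infty$, after which $\Iz$ is also two-sided bounded. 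Lemma \ref{phi} then yields $\lambda_{i_{\max}}^n \les \Phi_n^{N-1}$ bounded above, $\lambda_{i_{\max}}^n \ges \Ia(\mathcal R_n)^{-1/\alpha}$ (resp.\ $\ges e^{-\Iz(\mathcal R_n)}$) bounded below, and for $i \ne i_{\max}$
\[
\lambda_i^n \les \Ia(\mathcal R_n)^{1/\alpha} \Phi_n^{N-2} V_n^{1/(N-1)}
\]
(resp.\ the analogous estimate with $e^{\Iz(\mathcal R_n)}$), which tends to $0$ since $V_n \to 0$.

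Third, I extract a subsequence along which (after reordering) $\lambda_1^n \to L \in (0, +\infty)$ and $\lambda_i^n \to 0$ for $i \ge 2$. After a rotation sending the first axis of $\mathcal R_n$ to $e_1$ and a translation placing one vertex at the origin, $E_n \subseteq C_N \mathcal R_n$ is uniformly bounded, so by Blaschke's selection theorem a further subsequence converges in Hausdorff topology to a convex body $K$. The vanishing of the transverse side lengths forces $K \subseteq \R e_1$, while $\mathcal R_n \subseteq E_n$ with $\lambda_1^n \to L > 0$ implies $K$ contains a segment of positive length. Being a closed convex subset of a line containing a non-degenerate segment, $K$ is itself a segment, and a final translation puts it in the canonical form $[0, L]\times\{0\}^{N-1}$.

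The main obstacle I foresee is securing the two-sided bound on $\lambda_{i_{\max}}^n$ in the logarithmic regime $\alpha = 0$: because $\Iz$ is not sign-definite, the bound on $\wFaQn$ does not split termwise for free, and one must compare the logarithmic decay of $\Iz$ with the polynomial growth of the perimeter term in order to rule out both degenerations $\lambda_{i_{\max}}^n \to 0$ and $\lambda_{i_{\max}}^n \to +\infty$. Once this is in place, the rest is a straightforward application of Lemma \ref{phi} and Blaschke's selection theorem.
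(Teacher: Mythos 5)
Your proposal follows essentially the same route as the paper's proof: apply John's lemma, transfer the energy bound to the parallelepiped $\mathcal R_n$, use the two-sided estimates from Lemma \ref{phi} to show the largest side length stays of order one while the transverse sides are $O(V_n^{1/(N-1)})\to 0$, and then extract a Hausdorff-convergent subsequence whose limit is a non-degenerate segment. The $\alpha=0$ subtlety you flag is handled in the paper in the same way you indicate, by combining $\Iz(\mathcal R_n)\ges -\log\lambda_{i_{\max}}^n$ with $\lambda_{i_{\max}}^n\les\Phi_n^{N-1}$ to get $\Phi_n\ges\exp(-\Iz(\mathcal R_n)/(N-1))$ and thereby force a two-sided bound on $\Iz(\mathcal R_n)$, so there is no genuine divergence between the two arguments.
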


\begin{proof}
The bound on $\I_\alpha(E_n)$ directly implies with \eqref{estimlambdaalpha} (or \eqref{estimlambdazero} in the case $\alpha=0$) that the diameter of $E_n$
is uniformly bounded from below. 

Let us show that the diameter
of $E_n$ is also uniformly bounded from above. Arguing as in Theorem \ref{esistenza}, let $\mathcal{R}_n=\prod_{i=1}^N[0,\lambda_{i}^n]$ be the parallelepipeds given by Lemma \ref{john}, and assume without loss of generality that $\lambda_1^n\ge\lambda_2^n\ge\dots\ge\lambda_N^n$. In the case $\alpha>0$, \eqref{estimlambdageom} directly gives the bound 
while for $\alpha=0$, we get using \eqref{estimlambdageom} and \eqref{estimlambdazero}, that $|\Iz(\mathcal{R}_n)|$ is uniformly bounded, from which the bound on the diameter follows, using once again \eqref{estimlambdageom}. Moreover, from \eqref{estimlambdaalpha} and \eqref{estimlambdazero},
we obtain that $\lambda_i^n\sim  V_n^{\frac{1}{N-1}}$ (where the constants depend  on $\wFaQn(E_n)$), for $i=2,\dots,N$.
The convex bodies $E_n$ are therefore compact in 
the Hausdorff topology and 
any limit set is a non-trivial segment of length $L\in (0,+\infty)$.
\end{proof}
In the proof of the $\Gamma-$convergence result we will use the following result.
\begin{lemma}\label{concavprob}
 Let $0<\gamma<\beta$ with $\beta\ge 1$, $V>0$ and $L>0$, then 
 \begin{equation}\label{minconv}
  \min\lt\{ \int_{0}^L f^{\gamma} \ : \ \int_{0}^L f^\beta =V, \ f \textrm{ concave and } f\ge 0 \rt\}=\frac{(\beta+1)^{\gamma/\beta}}{\gamma+1} L^{1-\frac{\gamma}{\beta}} V^{\gamma/\beta}.
 \end{equation}
\end{lemma}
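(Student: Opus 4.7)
The plan is to establish matching upper and lower bounds directly.

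For the upper bound, take the tent $f_0(x):=M_0(1-x/L)$ with $M_0:=((\beta+1)V/L)^{1/\beta}$; direct integration yields $\int_0^L f_0^p\,dx = LM_0^p/(p+1)$ for every $p>0$, so $f_0$ is admissible and realizes the claimed value $\tfrac{(\beta+1)^{\gamma/\beta}}{\gamma+1}L^{1-\gamma/\beta}V^{\gamma/\beta}$.

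For the lower bound, I would first prove existence of a minimizer. For any concave nonnegative $f$ with maximum $M=\|f\|_\infty$ at some $x_0$, concavity gives the pointwise bound $f(x)\geq M\min(x/x_0,(L-x)/(L-x_0))$ (the tent with the same max), hence $\int f^\beta \geq LM^\beta/(\beta+1)$, so $\|f\|_\infty\leq M_0$ on the constraint set. Bounded concave functions are equi-Lipschitz on compact interior subintervals, so by Arzel\`a--Ascoli a minimizing sequence converges (along a subsequence) uniformly on interior subintervals to a minimizer $f_*$. A first-variation argument then forces $f_*$ to be piecewise affine: on any open subinterval where $-f_*''$ has positive mass (i.e., $f_*$ is strictly concave there), small $C^2_c$-perturbations of either sign preserve concavity and nonnegativity, so the Euler--Lagrange identity $\gamma f_*^{\gamma-1}=\lambda\beta f_*^{\beta-1}$ must hold pointwise for some Lagrange multiplier $\lambda$, forcing $f_*$ constant on that subinterval---a contradiction. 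Thus $f_*$ consists of at most two linear pieces, i.e., a tent-trapezoid with peak $M$ at some $x_0\in[0,L]$ and boundary values $a,b\in[0,M]$.

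The main obstacle is to show that within this finite-dimensional family the minimum occurs at $a=b=0$ (the pure tent), for which $\int f^p = LM^p/(p+1)$ is independent of $x_0$ and the optimum recovers the claimed value. Writing out $\int f^p$ explicitly and eliminating $M$ via $\int f^\beta=V$, the task reduces (by symmetry it suffices to treat $b=0$) to
\[
A^\beta \geq B^\gamma, \qquad A:=1+\xi s\tfrac{1-s^\gamma}{1-s},\quad B:=1+\xi s\tfrac{1-s^\beta}{1-s},
\]
for all $\xi:=x_0/L\in[0,1]$ and $s:=a/M\in[0,1)$. Both sides equal $1$ at $s=0$, and $\phi(\xi):=\beta\log A-\gamma\log B$ satisfies $\phi(0)=0$ and $\phi'(\xi)\geq 0$: clearing denominators, the sign condition becomes $\beta c_\gamma -\gamma c_\beta + \xi c_\gamma c_\beta(\beta-\gamma)\geq 0$ with $c_p:=s(1-s^p)/(1-s)$; the second summand is manifestly nonnegative, and the first reduces to $\beta-\gamma\geq\beta s^\gamma-\gamma s^\beta$, valid on $[0,1]$ because $s\mapsto\beta s^\gamma-\gamma s^\beta$ has derivative $\gamma\beta(s^{\gamma-1}-s^{\beta-1})\geq 0$ on $(0,1)$ (using $0<\gamma<\beta$). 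Hence $\phi\geq 0$, which gives $A^\beta\geq B^\gamma$ and closes the argument.
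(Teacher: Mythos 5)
Your overall plan---build a tent competitor for the upper bound, prove existence of a minimizer, then use a first variation to classify $f_*$ and finish with an explicit calculation---is reasonable, the upper-bound calculation is correct, and the algebraic inequality $A^\beta\ge B^\gamma$ at the end is a nice piece of elementary work (I checked it and it holds). However, there are two genuine gaps in the middle.

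First, the step ``$f_*$ consists of at most two linear pieces'' is not justified. Two-sided $C^2_c$ perturbations $f_*\pm t\phi$ remain concave only where $-f_*''$ has a strictly positive \emph{density}; having positive \emph{mass} on an interval is much weaker. If $-f_*''$ is, say, a Dirac mass at a kink, then $-f_*''+t\phi''$ fails to be a nonnegative measure for either sign of $t$ whenever $\phi''$ changes sign, so the Euler--Lagrange identity cannot be derived there. Your argument therefore rules out regions where $-f_*''$ has positive absolutely continuous density, but it does not rule out a singular $-f_*''$ (infinitely many kinks, or even a Cantor-type part), and it certainly does not bound the number of affine pieces by two: even if $f_*$ were piecewise affine, variations supported at one kink give only a single scalar relation, not a contradiction, so three or more pieces are not excluded by what is written. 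Second, once you restrict to the tent-trapezoid family $(x_0,a,b)$, the statement ``by symmetry it suffices to treat $b=0$'' is not correct: the reflection $x\mapsto L-x$ swaps $(a,b)$ but does not allow you to set one of them to zero, so the interior of the family (both $a,b>0$) is not covered by your computation.

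By contrast, the paper sidesteps both issues. It first reduces to a non-increasing minimizer $f$ via a symmetric rearrangement (this preserves concavity and all $\int f^p$), and then, assuming $f$ is not affine, compares directly with the affine function $\tilde f(x)=\lambda-(\lambda-f(1))x$ with $\lambda$ chosen so that $\int f^{\beta-1}\tilde f=\int f^\beta$. Setting $g=\tilde f-f$, the perturbations $(1-t)f+t\tilde f$ (with a small affine correction to restore the volume) stay concave, and minimality gives $\int f^{\gamma-1}g\ge 0$; but the sign pattern of $g$ together with the monotonicity of $f^{\beta-\gamma}$ forces $\int f^{\gamma-1}g<0$, a contradiction. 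A second, similar perturbation disposes of the case $f(1)>0$. No classification into finitely many affine pieces is needed, and the endpoint values are handled within the same framework. If you want to salvage your route, you would need to (i) prove piecewise affineness and the two-piece bound by perturbing towards secants (one-sided variations, then a sign argument as in the paper) rather than by $C^2_c$ bumps, and (ii) redo the $A^\beta\ge B^\gamma$ computation for the full three-parameter trapezoid family, not just $b=0$.
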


\begin{proof}
 For $L, V>0$, let 
 \[
 M(L,V):= \min\lt\{ \int_{0}^L f^{\gamma} \ : \ \int_{0}^L f^\beta =V, \ f \textrm{ concave and } f\ge 0 \rt\}.
 \]
Let us now prove \eqref{minconv}. By scaling, we can assume that $L=V=1$. Thanks to the concavity and positivity constraints, existence of a minimizer  for \eqref{minconv}  follows.  Let $f$ be such a minimizer. Let us prove that
we can assume that $f$ is non-increasing. Notice first that by definition, it holds
\[M(1,1)=\int_0^1 f^\gamma.\]
Up to a  rearrangement, we can assume that $f$ is symmetric around the point $1/2$, so that $f$ is non-increasing in $[1/2,1]$ and 
\[\int_{1/2}^1 f^\gamma =\frac{1}{2} M(1,1)=M(1/2,1/2).\]
Letting finally for $x\in[0,1]$, $\hat{f}(x):=f(\frac{1}{2}(x+\frac{1}{2}))$, we have that $\hat{f}$ is non-increasing, admissible for \eqref{minconv} and 
\[\int_0^1 \hat{f}^\gamma=2\int_{1/2}^1f^\gamma=M(1,1),\]
so that $\hat{f}$ is also a minimizer for \eqref{minconv}. \\
 Assume now that $f$ is not affine in $(0,1)$. Then there is $\overline x>0$ such that  for all $0<x\le\overline x$  
 \[f(x)> f(0)-(f(0)-f(1))x.\]
 Let $\tilde{f}:=\lambda -(\lambda-f(1))x$ with $\lambda>f(0)$ chosen so that 
 \begin{equation}\label{condvol}\int_0^1 f^{\beta-1} \tilde{f}=\int_0^1 f^{\beta}.\end{equation}
 Now, let $g:= \tilde{f}-f$. Since $f+g=\tilde{f}$ is concave, for every $0\le t \le 1$, $f+tg$ is a concave function. For  $\delta\in \R$, let $f_{t,\delta}:= f+t(g+\delta(1-x))$. Let finally $\delta_t$  be such that 
 \[\int_0^1 f_{t,\delta_t}^\beta=\int_0^1 f^\beta.\]
Thanks to \eqref{condvol} and since $\beta\ge 1$, $|\delta_t|=O(t)$.
%
Since $f_{t,\delta_t}$ is concave, by the minimality of $f$ we get 
 \[\int_0^1 f_{t,\delta_t}^\gamma- \int_0^1 f^\gamma\ge 0.\]
 Dividing by $t$ and taking the limit as $t$ goes to zero, we obtain
 \[
\int_0^1 f^{\gamma-1} g\ge 0.
 \]
Let  $z\in (0,1)$ be the unique point such that $\tilde f(z)=f(z)$ (so that $\tilde f(x) > f(x)$ for $x< z$ and $\tilde f(x) < f(x)$ for $x> z$). We then have,
\begin{align*}
0&\le  \int_0^1 f^{\beta-1} \frac{\tilde f-f}{f^{\beta-\gamma}}\\
&=\int_0^z f^{\beta-1} \frac{\tilde f-f}{f^{\beta-\gamma}}+\int_z^1 f^{\beta-1} \frac{\tilde f-f}{f^{\beta-\gamma}}\\
&< \frac{1}{f^{\beta-\gamma}(z)} \lt(\int_0^zf^{\beta-1} (\tilde f-f) +\int_z^1 f^{\beta-1} (\tilde f-f) \rt)\\
&= \frac{1}{f^{\beta-\gamma}(z)} \int_0^1 f^{\beta-1} (\tilde f-f),
\end{align*}
which contradicts \eqref{condvol}.\\

We are left to study the case when  $f$ is linear. Assume that $f(1)>0$ and let 
\[\delta:= \frac{\int_0^1 f^{\beta-1}}{\int_0^1 xf^{\beta-1}}>1,\]
so that in particular, $\int_0^1 f^{\beta-1} (1-\delta x)=0$.
Up to adjusting the volume as in the previous case, for $t>0$ small enough, $f+t(1-\delta x)$ is admissible. From this, arguing as above, we find that 
\[\int_0^1 f^{\gamma-1}(1-\delta x)\ge 0.\]
By splitting the integral around the point $\bar z=\delta^{-1}\in(0,1)$ and  proceeding as above,
we get again a contradiction. As a consequence, we obtain that $f(x)=\lambda (1-x)$, 
with $\lambda=(\beta+1)^{1/\beta}$ so that the volume constraint is satisfied. This concludes the proof of \eqref{minconv}.
\end{proof}

We now prove the following $\Gamma-$convergence result.
\begin{theorem}\label{gammaalpha}
For $\alpha\in [0,1)$,
the functionals $\wFaQ$ $\Gamma$--converge in the Hausdorff topology,
as $n\to +\infty$, to the functional
\[ 
\widehat{\mathcal{F}}_\alpha(E):=\begin{cases}
                            C_N \, L^{\frac{1}{N-1}}
+ \dfrac{\Ia([0,1])}{L^\alpha} 
& \textrm{if } E\simeq[0,L]\times\{0\}^{N-1} \textrm{ and }
\alpha>0
\\
\\
C_N \, L^{\frac{1}{N-1}}
+ \I_0([0,1]) - \log L
& \textrm{if } E\simeq[0,L]\times\{0\}^{N-1} \textrm{ and }
\alpha=0
\\
\\
                            +\infty & \textrm{otherwise,}
                           \end{cases}
\]
where $E\simeq F$ means that $E=F$ up to a rigid motion,
and $C_{N}:=\omega_{N-1}^{1/(N-1)}N^{(N-2)/(N-1)}$ with $\omega_N$ the volume of the ball of radius one in $\R^N$ (so that for $N=2$ we have $C_2=2$).
\end{theorem}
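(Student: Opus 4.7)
The plan is to establish the two standard conditions for $\Gamma-$convergence in the Hausdorff topology: a liminf inequality and the construction of a recovery sequence. The substance lies in showing that any non-segmental Hausdorff limit forces the energy to blow up, and in identifying the sharp constant $C_N$ in the liminf perimeter estimate.

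For the \emph{liminf inequality}, I would fix $E_n\in\A_{n,\alpha}$ with $E_n\to E$ in Hausdorff and split according to the affine dimension $d$ of $E$. If $d=0$ then $\diam(E_n)\to 0$, so already the non-local term $\Ia(E_n)\ges\diam(E_n)^{-\alpha}$ (respectively $\Iz(E_n)\ges -\log\diam(E_n)$) diverges. If $d\ge 2$, which requires $N\ge3$, the John parallelepipeds $\mathcal R_n=\prod_{i=1}^N[0,\lambda_i^n]$ of Lemma \ref{john} have at least two edges $\lambda_1^n,\lambda_2^n$ bounded away from zero; since $\prod_i\lambda_i^n\sim V_n\to0$ forces $\lambda_N^n\les(V_n/(\lambda_1^n\lambda_2^n))^{1/(N-2)}$, the perimeter bound $P(\mathcal R_n)\ges\prod_{j=1}^{N-1}\lambda_j^n$ yields $V_n^{-(N-2)/(N-1)}P(E_n)\ges V_n^{1/(N-1)}/\lambda_N^n\to+\infty$. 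In the remaining case $d=1$, after rotation $E=[0,L]\times\{0\}^{N-1}$; I would set $A_n(x_1):=E_n\cap(\{x_1\}\times\R^{N-1})$ and $g_n(x_1):=|A_n(x_1)|^{1/(N-1)}$, which by Brunn--Minkowski is concave and non-negative on its support $[a_n,b_n]$ with $b_n-a_n\to L$. Combining the $BV$ slicing formula $P(E_n)\ge\int P_{N-1}(A_n(x_1))\,dx_1$, the isoperimetric inequality in $\R^{N-1}$ giving $P_{N-1}(A_n(x_1))\ge(N-1)\omega_{N-1}^{1/(N-1)}g_n(x_1)^{N-2}$, and Lemma \ref{concavprob} with $\beta=N-1$, $\gamma=N-2$ under the constraint $\int g_n^{N-1}\,dx_1=V_n$, I obtain
\[
V_n^{-\frac{N-2}{N-1}}P(E_n)\;\ge\;\omega_{N-1}^{\frac{1}{N-1}}N^{\frac{N-2}{N-1}}(b_n-a_n)^{\frac{1}{N-1}}\;=\;C_N(b_n-a_n)^{\frac{1}{N-1}}.
\]
The case $N=2$ is direct since $g_n^0$ is the indicator of $[a_n,b_n]$ and $C_2=2$. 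For the non-local term, lower semicontinuity of $\Ia$ under Hausdorff convergence (as in \cite[Proposition 2.2]{gnrI}) together with the scalings $\Ia([0,L])=L^{-\alpha}\Ia([0,1])$ and $\Iz([0,L])=\Iz([0,1])-\log L$ completes the liminf inequality.

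For the \emph{recovery sequence}, given $E=[0,L]\times\{0\}^{N-1}$, I would pick the convex cone
\[
E_n:=\{(x_1,x')\in[0,L]\times\R^{N-1}\,:\,|x'|\le\lambda_n(1-x_1/L)\},\qquad \lambda_n:=\left(\frac{NV_n}{\omega_{N-1}L}\right)^{1/(N-1)},
\]
which is precisely the extremal configuration of Lemma \ref{concavprob}. Then $|E_n|=V_n$, $E_n\to E$ in Hausdorff, and since $E\subset E_n$ the monotonicity of $\Ia$ combined with its lower semicontinuity yields $\Ia(E_n)\to\Ia(E)$. A direct computation of the lateral surface area, the base contributing a lower-order term as $\lambda_n\to0$, gives $V_n^{-(N-2)/(N-1)}P(E_n)\to C_N L^{1/(N-1)}$, hence $\limsup\wFaQ(E_n)\le\widehat{\mathcal F}_\alpha(E)$.

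The main obstacle is extracting the sharp constant $C_N$ in the liminf perimeter estimate: one must weave together the $BV$ slicing formula, the isoperimetric inequality in $\R^{N-1}$, and the full strength of Brunn--Minkowski (the concavity of $g_n$, not merely its non-negativity), so that Lemma \ref{concavprob} applies and delivers precisely the factor $N^{(N-2)/(N-1)}/(N-1)$ needed to match $C_N$. The recovery sequence is then the affine cone realizing equality in that lemma, which closes the gap and produces the stated $\Gamma-$limit.
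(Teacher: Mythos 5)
Your proof is correct and rests on the same decisive ingredient as the paper's, namely Lemma \ref{concavprob}, which delivers the sharp constant $C_N$ in the $\Gamma$--liminf bound for the rescaled perimeter; your treatment of the non-local part (monotonicity of $\Ia$ under inclusion combined with its lower semicontinuity along the recovery sequence, lower semicontinuity alone for the liminf) is likewise identical. The remaining differences are presentational rather than structural. You establish the $+\infty$ value on non-segments directly by dimension-counting (the non-local term blows up for $d=0$, the rescaled perimeter blows up for $d\ge2$ via John's lemma), whereas the paper delegates this to the compactness statement of Proposition \ref{procomp}. For the liminf on segments you feed the $BV$ slicing inequality and the $(N-1)$-dimensional isoperimetric inequality into Lemma \ref{concavprob}, with Brunn--Minkowski supplying the required concavity of $g_n$; the paper instead Schwarz-symmetrizes about the axis (Brunn's principle guaranteeing convexity of the symmetrized body) and then applies the coarea formula before Lemma \ref{concavprob} --- these two routes are interchangeable, since symmetrization is exactly the operation that realizes the slice-wise isoperimetric equality. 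Finally, your recovery set is the single cone that is the extremal of Lemma \ref{concavprob}, with the lower-order flat base correctly discarded, whereas the paper uses a symmetric bipyramid; both yield the same asymptotic perimeter $C_N L^{1/(N-1)} V_n^{(N-2)/(N-1)}$. In short the argument is sound and amounts to the same computation as the paper's.
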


\begin{proof}
By Proposition \ref{procomp} we know that the $\Gamma$-limit
is $+\infty$ on the sets which are not segments.

Let us first prove the $\Gamma$-limsup inequality. 
Given $L\in (0,+\infty)$, we are going to construct $E_n$ symmetric with respect to the hyperplane $\{0\}\times\R^{N-1}$. For $t\in[0,L/2]$, we let $r(t):=\lt(\frac{NV_n}{\omega_{N-1} L}\rt)^{1/(N-1)}\lt(1-\frac{2t}{L}\rt)$ and then 
\[E_n\cap \lt(\R^+\times \R^{N-1}\rt):=\left\{ \left(t,B_{r(t)}^{N-1}\right) \ : t\in [0,L/2] \right\},\]
where $B_{r(t)}^{N-1}$ is the ball of radius $r(t)$ in $\R^{N-1}$. With this definition, $|E_n|=V_n$, so that $E_n\in \A_{n,\alpha}$. We then compute
\begin{align*}
P(E_n)&=2\int_0^{L/2} \mathcal H^{N-2}(\mathbb{S}^{N-2}) r(t)^{N-2}\sqrt{1+|r'|^2}
\\
&= 2(N-1)\,\omega_{N-1}\lt(\frac{NV_n}{\omega_{N-1} L}\rt)^\frac{N-2}{N-1}
\int_0^{L/2}\lt(1-\frac{2t}{L}\rt)^{N-2}\lt(1+ \frac{c_N}{L^2}\lt(\frac{V_n}{ L}\rt)^\frac{2}{N-1}\rt)^{1/2}
\\
&= C_N V_n^\frac{N-2}{N-1} L^\frac{1}{N-1} +o\left(V_n^\frac{N-2}{N-1}\right).
\end{align*}

Letting $\mu_\alpha$ be the optimal measure for 
$\I_\alpha([-L/2,L/2])$, we then have
\[
\wFaQ(E_{n}) \le C_N L^\frac{1}{N-1} + 
\I_\alpha([0,L]) +o(1),
\]
which gives the $\Gamma$-limsup inequality.

We  now turn to the  the $\Gamma$-liminf inequality. Let $E_n\in \A_{n,\alpha}$ be 
such that $E_n\to [0,L]\times\{0\}^{N-1}$ in the 
Hausdorff topology.
Since $\Ia$ is continuous under Hausdorff convergence, 
it is enough to prove that
 \begin{equation}\label{lowerbound}
 \liminf_{n\to+\infty} 
V_n^{-\frac{N-2}{N-1}}\,P(E_n)\ge C_{N}\, L^{\frac{1}{N-1}}.  
 \end{equation}
 Let $L_n:=\diam(E_n)$. By Hausdorff convergence, we have that $L_n\to L$. Moreover, up to a rotation and a translation, we can assume that $[0,L_n]\times\{0\}^{N-1}\subset E_n$. For $N=2$, we directly obtain $P(E_n)\ge 2L_n$ which gives \eqref{lowerbound}. We thus assume from now on that $N\ge 3$.
 Let $\widetilde{E}_n$ be the set
 obtained from $E_n$ after a Schwarz symmetrization around the axis $\R\times\{0\}^{N-1}$. By Brunn's principle \cite{Brunn}, $\widetilde{E}_n$ is still a convex set with $P(E_n)\ge P(\widetilde{E}_n)$ and $|E_n|= |\widetilde{E}_n|$. 
We thus have that 
\[
\widetilde{E}_n=\bigcup_{t\in[0,L_n]}\{t\}\times B_{r(t)}^{N-1}
\]
for an appropriate function $r(t)$, and, by Fubini's Theorem,
\[\int_0^{L_N} r(t)^{N-1}=\frac{V_n}{\omega_{N-1}}.\]
By the Coarea Formula \cite[Theorem 2.93]{AFP}, we then get 
\[
P(\widetilde{E}_n)\ge \mathcal H^{N-2}(\mathbb{S}^{N-2})\int_0^{L_n} r(t)^{N-2} \sqrt{1+|r'(t)|^2}\ge  \mathcal H^{N-2}(\mathbb{S}^{N-2})\int_0^{L_n} r(t)^{N-2}.
\]
Applying then Lemma \ref{concavprob} with $\gamma=N-2$ and $\beta=N-1$, we obtain \eqref{lowerbound}.
 \end{proof}

\begin{remark}\label{remass}\rm
For $\alpha\in[0,1)$ and $N\ge 2$, it is easy to optimize $\widehat{F}_{\alpha}$ in $L$ and obtain the values $L_{N,\alpha}$ given in Theorem \ref{teoconv}.
\end{remark}

From Proposition \ref{procomp},
Theorem \ref{gammaalpha} and the uniqueness of the minimizers for $\widehat{F}_{\alpha}$, we directly obtain the following asymptotic 
result for minimizers of \eqref{problema}.

\begin{corollary}\label{corasymptotic}
Let $\alpha\in [0,1)$ and $N\ge 2$. Then, up to rescalings and  rigid motions, every sequence $E_n$ of minimizers of \eqref{problema} converges in the
Hausdorff topology
to $[0,L_{N,\alpha}]\times\{0\}^{N-1}$. 
\end{corollary}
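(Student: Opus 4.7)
The strategy is the standard ``compactness + $\Gamma$-convergence + uniqueness of the limit minimizer'' argument. Let $E_n$ be minimizers of \eqref{problema} for $Q=Q_n\to+\infty$, and set $\widehat E_n:=Q_n^{-\frac{2(N-1)}{1+(N-1)\alpha}}\,E_n$. As recalled just before Proposition \ref{procomp}, each $\widehat E_n$ minimizes $\wFaQn$ in the class $\A_{n,\alpha}$.

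The first step is to verify that $\sup_n \wFaQn(\widehat E_n)<+\infty$, so that Proposition \ref{procomp} applies. For this, I would use the recovery sequence $F_n$ built in the $\Gamma$-$\limsup$ part of Theorem \ref{gammaalpha} with $L=L_{N,\alpha}$: by construction $F_n\in\A_{n,\alpha}$ and $\wFaQn(F_n)\to \widehat{\mathcal F}_\alpha([0,L_{N,\alpha}]\times\{0\}^{N-1})<+\infty$, so minimality of $\widehat E_n$ yields the desired uniform bound. Applying Proposition \ref{procomp}, up to a subsequence and a rigid motion, $\widehat E_n\to [0,L]\times\{0\}^{N-1}$ in the Hausdorff topology for some $L\in(0,+\infty)$.

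The second step is to identify $L$. By the $\Gamma$-$\liminf$ inequality in Theorem \ref{gammaalpha},
\[
\widehat{\mathcal F}_\alpha([0,L]\times\{0\}^{N-1})\le \liminf_n \wFaQn(\widehat E_n).
\]
On the other hand, for every $L'\in(0,+\infty)$ the $\Gamma$-$\limsup$ inequality provides a recovery sequence $F_n^{L'}\in\A_{n,\alpha}$ with $\wFaQn(F_n^{L'})\to \widehat{\mathcal F}_\alpha([0,L']\times\{0\}^{N-1})$, and minimality gives $\wFaQn(\widehat E_n)\le \wFaQn(F_n^{L'})$. Combining,
\[
\widehat{\mathcal F}_\alpha([0,L]\times\{0\}^{N-1})\le \widehat{\mathcal F}_\alpha([0,L']\times\{0\}^{N-1}) \qquad \forall\,L'\in(0,+\infty).
\]
Thus $L$ minimizes the one-dimensional function $L'\mapsto \widehat{\mathcal F}_\alpha([0,L']\times\{0\}^{N-1})$. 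By Remark \ref{remass} this function has $L_{N,\alpha}$ as its unique minimizer (an elementary calculus exercise: differentiate in $L'$ and solve), whence $L=L_{N,\alpha}$.

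Finally, since every subsequence of $\{\widehat E_n\}$ admits a further subsequence converging, up to rigid motions, to the same limit $[0,L_{N,\alpha}]\times\{0\}^{N-1}$, the whole rescaled sequence converges. I do not foresee a serious obstacle: the only non-routine point is checking the uniform energy bound, which is handled by the recovery sequence; everything else follows the textbook pattern for passing from $\Gamma$-convergence to convergence of minimizers.
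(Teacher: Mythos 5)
Your proposal is correct and follows exactly the argument the paper intends (the paper just states "From Proposition \ref{procomp}, Theorem \ref{gammaalpha} and the uniqueness of the minimizers for $\widehat{\mathcal F}_\alpha$, we directly obtain\dots" without spelling out the details): uniform energy bound via the recovery sequence, compactness from Proposition \ref{procomp}, identification of the limit length via the two $\Gamma$-inequalities plus minimality, uniqueness of the minimizing $L$ from Remark \ref{remass}, and the subsequence principle for the whole sequence.
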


\subsection{The case $N=2,\,3$ and $\alpha=1$}
In the case $\alpha\ge 1$, the energy $\I_\alpha$ is infinite on segments and thus a $\Gamma-$limit of the same type as the one obtained in Theorem \ref{gammaalpha} cannot be expected. Nevertheless in the Coulombic case $N=3$, $\alpha=1$
we  can use a dual formulation of the non-local part of the energy to obtain the $\Gamma-$limit. As a by-product, we  can also treat the case  $N=2$, $\alpha=1$.

For $N=2,3$ and $n\in \N$, we let 
\begin{eqnarray*}
\A_{n,1} &:=& \left\{ E\subset\R^3\ \text{convex body,}
\ |E|=Q_n^{-2(N-1)}(\log Q_n)^{-(N-1)}\right\}\,,
\\
\wFQ(E) &:=& Q_n^{2(N-2)}(\log Q_n)^{N-2}\,P(E)+\frac{\I_1(E)}{\log Q_n},
\qquad  \textrm{for } E\in\A_{n,1}\,.
\end{eqnarray*}
\noindent
As before, if $E$ is a minimizer of \eqref{problema}, 
then the rescaled set 
$$
\widehat E:= Q_n^{-\frac{2(N-1)}{N}}(\log Q_n)^{-\frac{(N-1)}{N}}\,E
$$ 
is a minimizer of $\wFQ$ in $\A_{n,1}$. 

Let $C_\eps:=[0,1]\times B_\eps\subset \R^3$ be a narrow cylinder of radius $\eps>0$
(where $B_\eps$ denotes a two-dimensional ball of radius $\eps$).
We begin by proving the following estimate on $\I_1(C_\eps)$:

\begin{proposition}\label{proeps}
It holds that
\begin{equation}\label{estcap}
\lim_{\eps\to 0}  \frac{\I_1(C_\eps)}{|\log \eps|} =2\,.
\end{equation}
As a consequence, for every $L>0$,
\begin{equation}\label{estcapL}
\lim_{\eps\to 0}  \frac{\I_1([0,L]\times B_\eps)}{|\log \eps|} =\frac 2L\,.
\end{equation}
\end{proposition}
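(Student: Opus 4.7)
The plan is to establish the two-sided estimate $\I_1(C_\eps)=2|\log\eps|+O(1)$, from which \eqref{estcap} is immediate, and then deduce \eqref{estcapL} by scaling. The upper bound will come from plugging the uniform measure on $C_\eps$ into the definition of $\I_1$, and the lower bound from the classical identity $\I_1(E)=4\pi/\Cap^*(E)$, where
\[
\Cap^*(E):=\inf\Bigl\{\int_{\R^3}|\nabla u|^2\,dx : u\in\dot H^1(\R^3),\ u\ge 1\text{ on }E\Bigr\}
\]
is the (unnormalized) Dirichlet capacity (recall for instance that on $B_R$ one has $\Cap^*(B_R)=4\pi R$ and $\I_1(B_R)=1/R$).

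For the upper bound, take $\mu_\eps:=(\pi\eps^2)^{-1}\chi_{C_\eps}\,dx$. By Fubini,
\[
\I_1(C_\eps)\le\int\!\!\int\frac{d\mu_\eps\,d\mu_\eps}{|x-y|}=\frac{1}{\pi^2\eps^4}\int_{[0,1]^2}J(|x_1-y_1|)\,dx_1dy_1,\qquad J(s):=\int_{B_\eps\times B_\eps}\frac{dx'dy'}{\sqrt{s^2+|x'-y'|^2}}.
\]
Using the scaling bound $J(s)\le J(0)\les\eps^3$ for $s\le\eps$ and the pointwise bound $J(s)\le(\pi\eps^2)^2/s$ for $s\ge\eps$, one gets $\I_1(C_\eps)\le 2|\log\eps|+O(1)$ after elementary integration.

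For the lower bound, take the test function $u_\eps(x):=\bigl(\min(1,\log(1/d(x))/|\log\eps|)\bigr)_+$, where $d(x):=\mathrm{dist}(x,[0,1]\times\{0\}^2)$. Then $u_\eps\equiv 1$ on $C_\eps$ (since $d\le\eps$ there), $u_\eps$ has compact support in $\{d\le 1\}$, and $|\nabla u_\eps|=1/(d(x)|\log\eps|)$ on the annular region $\{\eps<d<1\}$ because $|\nabla d|=1$ a.e. The coarea formula combined with the identity $\H^2(\{d=r\})=2\pi r+4\pi r^2$ (lateral area of the tube of radius $r$ plus two end hemispheres) gives
\[
\int|\nabla u_\eps|^2\,dx=\frac{1}{|\log\eps|^2}\int_\eps^1\frac{2\pi r+4\pi r^2}{r^2}\,dr=\frac{2\pi}{|\log\eps|}+O(|\log\eps|^{-2}),
\]
so $\Cap^*(C_\eps)\le \frac{2\pi}{|\log\eps|}+O(|\log\eps|^{-2})$ and hence $\I_1(C_\eps)\ge 2|\log\eps|-O(1)$, proving \eqref{estcap}. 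Then \eqref{estcapL} follows by homogeneity: the kernel $1/|x-y|$ is $(-1)$-homogeneous so $\I_1(\lambda E)=\lambda^{-1}\I_1(E)$, and writing $[0,L]\times B_\eps=L\,C_{\eps/L}$ together with $|\log(\eps/L)|=|\log\eps|+O(1)$ finishes the proof.

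The main subtlety is the sharp constant $2$ in the lower bound. A naive projection $\mu\mapsto\pi_*\mu$ onto the axis replaces $1/|x-y|$ by $1/\sqrt{(x_1-y_1)^2+4\eps^2}$ and reduces matters to a regularized log-energy on probability measures in $[0,1]$; this gives the correct order $|\log\eps|$ but not transparently the constant $2$. The capacity/Dirichlet route bypasses this by letting us work with test functions (essentially the leading-order equilibrium potential of a thin wire), in which case the end-cap contribution $4\pi r^2$ from the spherical ends of the tube is strictly subleading compared to the cylindrical $2\pi r$, so the factor $2$ emerges cleanly from the leading $2\pi/|\log\eps|$.
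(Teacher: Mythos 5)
Your proof is correct and follows the same broad strategy as the paper (uniform measure for the upper bound, capacity identity $\I_1 = 4\pi/\Cap$ for the lower bound), but the capacity test function you use is genuinely different and arguably cleaner. The paper constructs a separated-variables competitor $\phi(x',z)=f_\lambda(x')\rho_\mu(z)$, with a two-dimensional logarithmic profile $f_\lambda$ in the transverse variable and a piecewise-linear cutoff $\rho_\mu$ in the axial variable; this introduces two free parameters whose interaction produces error terms $\mu/\log(\lambda/\eps)+\lambda^2/\mu$ that must be balanced by the (slightly delicate) choice $\lambda=|\log\eps|^{-1}$, $\mu=(\log|\log\eps|)^{-1}$, yielding $\Cap(C_\eps)\le 2\pi/|\log\eps|+o(|\log\eps|^{-1})$. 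Your competitor $u_\eps=\bigl(\min(1,\log(1/d)/|\log\eps|)\bigr)_+$ built from the distance $d$ to the full segment replaces this with a single radially symmetric profile; the coarea formula with the exact ``capsule'' level-set area $\H^2(\{d=r\})=2\pi r+4\pi r^2$ shows that the end-cap contribution is automatically subleading, and you get the sharper estimate $\Cap^*(C_\eps)\le 2\pi/|\log\eps|+O(|\log\eps|^{-2})$, hence $\I_1(C_\eps)\ge 2|\log\eps|-O(1)$, with no parameter balancing. For the upper bound your Fubini decomposition via $J(s)$ differs only cosmetically from the paper's direct computation, and the scaling step to \eqref{estcapL} is the same in both. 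One small point worth recording explicitly: the passage from $\int|\nabla u_\eps|^2$ to the coarea integral uses $|\nabla d|=1$ a.e.\ on $\{d>0\}$, which holds for the distance to any closed set by Rademacher's theorem, and that $u_\eps$ is Lipschitz with compact support so it is a legitimate competitor for $\Cap^*$.
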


\begin{proof}
The equality in \eqref{estcap} is well-known (see for instance \cite{maxwell}).
We include here a proof for the reader's convenience.

To show that
\[\lim_{\eps\to 0}  |\log \eps|^{-1} \I_1(C_\eps) \le 2,\] 
we use  $\mu_\eps:=\frac{1}{\pi \eps^2} \chi_{C_\eps}$ as  a test measure in the definition of $\I_1(C_\eps)$. Then, noting that for every $y\in C_\eps$,
\[\int_{C_\eps+y} \frac{dz}{|z|}\le \int_{[-1/2,1/2]\times B_\eps} \frac{dz}{|z|},\]
we obtain
\begin{align*}
\I_1(C_\eps)&\le \frac{1}{\pi^2\eps^4} \int_{C_\eps\times C_\eps}\frac{dxdy}{|x-y|}=\frac{1}{\pi^2\eps^4} \int_{C_\eps}\lt(\int_{C_\eps+y} \frac{dz}{|z|}\rt) dy\\
&\le \frac{1}{\pi\eps^2}\int_{-1/2}^{1/2} \int_{B_\eps} \frac{1}{(z_1^2+|(z_2,z_3)|^2)^{1/2}}
=\frac{4}{\eps^2}\int_{0}^{1/2} \int_{0}^{\eps} \frac{r}{(z_1^2+r^2)^{1/2}}\\
&=\frac{4}{\eps^2}\int_{0}^{1/2} \sqrt{z_1^2+\eps^2}-z_1 \\
&=\frac{4}{\eps^2}\lt( \frac{1}{8}\sqrt{1+4 \eps^2}-\frac{1}{8} +\frac{\eps^2}{2}\log\lt(\frac{1}{2\eps}+\sqrt{1+\frac{1}{4\eps^2}}\rt)\rt)\\ 
&= 2|\log \eps|+o(|\log \eps|).
\end{align*}

In order to show the opposite inequality, we recall the following definition of capacity of a set $E$:  
\[
\Cap(E):=\min \left\{\int_{\R^3} |\nabla \phi|^2 \ : \ \chi_E\le \phi, \phi\in H^1_0(\R^3)\right\}
\]
Then , if $E$ is compact, we have \cite{landkof,gnrI}
\[\I_1(E)=\frac{4\pi}{\Cap(E)}.\]
Thus \eqref{estcap} will be proved once we show that 
\begin{equation}\label{estimCap}
\Cap(C_\eps) |\log \eps|\le 2\pi +o(1).\end{equation}
For this, let $\lambda>0$ and $\mu>0$ to be fixed later and let 
\[f_\lambda(x'):=\begin{cases}
                  1 & \textrm{for } |x'|\le \eps\\
                  1-\dfrac{\log(|x'|/\eps)}{\log(\lambda/\eps)} & \textrm{for } \eps\le |x'|\le \lambda\\
                  0 & \textrm{for } |x'|\ge \lambda
                 \end{cases}\]
                 
and
\[\rho_\mu(z):=\begin{cases}
               0 & \textrm{for } z\le-\mu\\
               \dfrac{z+\mu}{\mu} & \textrm{for } -\mu\le z\le 0\\
               1 &\textrm{for } 0\le z\le 1\\
               1-\dfrac{z-1}{\mu} &\textrm{for } 1\le z\le 1+\mu\\
               0& \textrm{for } z\ge 1+\mu.\\
               \end{cases}\]
 We finally let $\phi(x',z):= f_\lambda(x')\rho_\mu(z)$.
 Since $\rho_\mu, f_\lambda\le 1$ and $|\rho_\mu'|\le \mu^{-1}$, by definition of $\Cap(C_\eps)$, we have 
 \begin{align*}
  \Cap(C_\eps)&\le \int_0^1 \frac{2\pi}{\log(\lambda/\eps)^2} \int_\eps^\lambda \frac{1}{r} +C\lt(\frac{\mu}{\log(\lambda/\eps)}+\frac{\lambda^2}{\mu}\rt)\\
  &\le \frac{2\pi}{\log(\lambda/\eps)} +C\lt(\frac{\mu}{\log(\lambda/\eps)}+\frac{\lambda^2}{\mu}\rt). 
 \end{align*}
We now choose $\lambda:=|\log \eps|^{-1}\gg \eps$ and $\mu:=|\log \lambda|^{-1}=(\log |\log \eps|)^{-1}$ so that $\log(\lambda/\eps)= |\log \eps|+\log |\log \eps|$, $\mu\to 0$ and $\mu\gg \lambda$ so that 
\[\frac{\mu}{\log(\lambda/\eps)}+\frac{\lambda^2}{\mu}=o(|\log \eps|^{-1})\]
and we find \eqref{estimCap}.\\
The equality in \eqref{estcapL} then follows by scaling.
\end{proof}
As a simple corollary we get the two dimensional result 
\begin{corollary}

 \begin{equation}\label{estcap2d}
  \lim_{\eps\to 0}  \frac{\I_1([0,1]\times[0,\eps])}{|\log \eps|} =2\,.
 \end{equation}

\end{corollary}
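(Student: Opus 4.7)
The plan is to mirror the structure of Proposition \ref{proeps}: a direct uniform-measure computation will give the upper bound $\limsup \le 2$, while the lower bound $\liminf \ge 2$ will follow from embedding the rectangle into the 3D cylinder $C_\eps$ and invoking Proposition \ref{proeps}.

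For the upper bound, I would use $\mu_\eps := \eps^{-1}\chi_{R_\eps}$ with $R_\eps := [0,1]\times[0,\eps]$ as a test measure, so that
\[
\I_1(R_\eps) \le \frac{1}{\eps^2}\int_{R_\eps\times R_\eps}\frac{dx\,dy}{|x-y|}.
\]
Changing variables to $u = x_1-y_1 \in [-1,1]$ and $v = x_2-y_2 \in [-\eps,\eps]$ introduces the Jacobian weight $(1-|u|)(\eps-|v|)$. The inner integral in $v$ evaluates explicitly to
\[
2\bigl[\eps\,\sinh^{-1}(\eps/|u|) - \sqrt{u^2+\eps^2}+|u|\bigr],
\]
which behaves like $\eps^2/|u|$ when $|u|\gg\eps$ and like $2\eps\log(2\eps/|u|)$ when $|u|\ll\eps$. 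After multiplying by $(1-|u|)$ and integrating over $[-1,1]$, the short-range region $|u|\le\eps$ only yields an $O(\eps^2)$ term, while the long-range region $\eps\le|u|\le 1$ contributes $(2+o(1))\eps^2|\log\eps|$. Dividing by $\eps^2$ gives $\I_1(R_\eps)\le (2+o(1))|\log\eps|$.

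For the lower bound, I would exploit the isometric embedding $\R^2\hookrightarrow\R^3$, $(x_1,x_2)\mapsto(x_1,x_2,0)$, which preserves Euclidean distances and hence yields $\I_1(R_\eps) = \I_1^{(3D)}(R_\eps\times\{0\})$. Since the segment $[0,\eps]\times\{0\}\subset\R^2$ is contained in the closed disk $\bar B_\eps$ of radius $\eps$ centred at the origin, we have $R_\eps\times\{0\}\subset [0,1]\times\bar B_\eps = C_\eps$; by monotonicity of $\I_1$ under inclusion (larger sets admit more probability measures, hence a smaller infimum), $\I_1^{(3D)}(R_\eps\times\{0\})\ge\I_1^{(3D)}(C_\eps)$, and Proposition \ref{proeps} concludes $\liminf_{\eps\to 0}\I_1(R_\eps)/|\log\eps|\ge 2$.

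The hardest part will be pinning down the sharp constant $2$ in the upper-bound computation: a careless estimate $(\eps-|v|)\le\eps$ in the inner integral would cost a factor of $2$ (since $(\eps-|v|)$ averages to $\eps/2$ on $[-\eps,\eps]$), so one must retain the triangular weight and evaluate the one-dimensional $\sinh^{-1}$ integral exactly, carefully separating the contributions of $|u|\le\eps$ and $|u|\ge\eps$.
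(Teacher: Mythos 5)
Your proposal is correct and follows essentially the same route as the paper: the upper bound is obtained by testing with the uniform measure $\eps^{-1}\chi_{[0,1]\times[0,\eps]}$, and the lower bound comes from identifying $[0,1]\times[0,\eps]$ with $[0,1]\times[0,\eps]\times\{0\}\subset C_\eps$, using monotonicity of $\I_1$ under inclusion, and invoking Proposition \ref{proeps}. The only difference is cosmetic: the paper leaves the upper-bound computation implicit (``as above''), while you carry it out explicitly with the autocorrelation change of variables and the triangular weight $(1-|u|)(\eps-|v|)$ — a worthwhile check, since, as you note, a crude bound $\eps-|v|\le\eps$ would lose the sharp constant.
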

\begin{proof}
 The upper bound is obtained as above by testing with $\mu_\eps:= \eps^{-1} \chi_{[0,1]\times[0,\eps]}$. By identifying  
 $[0,1]\times[0,\eps]$ with $[0,1]\times[0,\eps]\times\{0\}\subset C_\eps$ we get that $\I_1([0,1]\times[0,\eps])\ge \I_1(C_\eps)$. This 
 gives together with \eqref{estcap} the corresponding lower bound.
\end{proof}
We can now prove a compactness result analogous to Proposition \ref{procomp}.

\begin{proposition}\label{procompuno}
Let $E_n\in \A_{n,1}$ be 
such that $\sup_n \wFQ(E_n)<+\infty$.
Then, up to extracting a subsequence and up to rigid motions, 
the sets $E_n$ converge in the Hausdorff topology to a 
segment $[0,L]\times\{0\}^{N}$, for some $L\in (0,+\infty)$.
\end{proposition}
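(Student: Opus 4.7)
The plan is to mimic the proof of Proposition \ref{procomp}, substituting the naive bound $\I_\alpha(E)\ges\diam(E)^{-\alpha}$---which in the Coulombic regime only forces $\diam(E_n)\ges(\log Q_n)^{-1}\to 0$---with the sharp narrow-cylinder capacity estimate of Proposition \ref{proeps} (together with its $N=2$ counterpart \eqref{estcap2d}).

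First, from $\sup_n \wFQ(E_n)<+\infty$ I extract the two pieces of information $\Phi_n:=V_n^{-(N-2)/(N-1)}P(E_n)\les 1$ and $\I_1(E_n)\les \log Q_n$. By Lemma \ref{john}, pick parallelepipeds $\mathcal{R}_n=\prod_{i=1}^N[0,\lambda_i^n]$ with $\lambda_1^n\ge\cdots\ge\lambda_N^n$ and $\mathcal{R}_n\subseteq E_n\subseteq C_N\mathcal{R}_n$. The first part of \eqref{estimlambdageom} gives $\lambda_1^n\les\Phi_n^{N-1}\les 1$; and \eqref{estimlambdaalpha} with $\alpha=1$ yields, for every $i\ge 2$,
\[
\lambda_i^n\les\I_1(E_n)\,\Phi_n^{N-2}\,V_n^{1/(N-1)}\les(\log Q_n)\cdot Q_n^{-2}(\log Q_n)^{-1}=Q_n^{-2},
\]
so $\eps_n:=\max_{i\ge 2}\lambda_i^n\to 0$.

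The heart of the argument---and the step I expect to be the main obstacle---is a lower bound on $\lambda_1^n$. Since $E_n\subseteq C_N\mathcal{R}_n$, $E_n$ sits inside a cylinder (a rectangle, when $N=2$) of length $\sim\lambda_1^n$ and cross-sectional diameter $\les\eps_n$. Combining the inclusion $\I_1(E_n)\ge\I_1(\text{cylinder})$ with the scaling $\I_1(\lambda E)=\lambda^{-1}\I_1(E)$ and Proposition \ref{proeps} (respectively \eqref{estcap2d} for $N=2$), I obtain, whenever $\eps_n/\lambda_1^n\to 0$,
\[
\I_1(E_n)\ges\frac{|\log(\eps_n/\lambda_1^n)|}{\lambda_1^n}.
\]
The elementary inequality $\lambda_1^n\ge V_n^{1/N}$ (from $V_n=\prod_i\lambda_i^n\le(\lambda_1^n)^N$) guarantees $\lambda_1^n\gg Q_n^{-2}\ges\eps_n$, making the ratio small, and moreover $|\log\lambda_1^n|\le\tfrac{1}{N}|\log V_n|=\tfrac{2(N-1)}{N}\log Q_n+o(\log Q_n)$. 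Combined with $|\log\eps_n|\ge 2\log Q_n-O(1)$, this gives $|\log(\eps_n/\lambda_1^n)|=|\log\eps_n|-|\log\lambda_1^n|\ges\tfrac{2}{N}\log Q_n$, whence
\[
\log Q_n\ges\I_1(E_n)\ges\frac{\log Q_n}{\lambda_1^n},
\]
forcing $\lambda_1^n\ges 1$. The whole argument depends on the logarithmic gain afforded by Proposition \ref{proeps}, together with enough algebraic room between $\eps_n$, $\lambda_1^n$, and $V_n^{1/N}$ to keep the surviving logarithm of order $\log Q_n$.

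Finally, $\lambda_1^n\sim 1$ and $\lambda_i^n\to 0$ for $i\ge 2$ give uniform bounds on $\diam(E_n)$; after translating each $E_n$ so that, say, $0\in\mathcal{R}_n$, the Blaschke selection theorem extracts a subsequence converging in the Hausdorff topology to a convex set with vanishing width in $N-1$ orthogonal directions---hence contained in a line---and with length $L:=\lim\lambda_1^n\in(0,+\infty)$. Up to a rigid motion, the limit is the segment $[0,L]\times\{0\}^{N-1}$.
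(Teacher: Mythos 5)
Your argument is correct and rests on the same key ingredient as the paper's proof: the comparison of a John parallelepiped for $E_n$ with a narrow cylinder and the sharp estimate $\I_1(C_\eps)\sim 2|\log\eps|$ from Proposition~\ref{proeps} (resp.\ \eqref{estcap2d} for $N=2$), which upgrades the naive $\lambda_1^n\gtrsim (\log Q_n)^{-1}$ to $\lambda_1^n\gtrsim 1$. The only organizational difference is the preliminary bound used to control the logarithm: the paper feeds the crude $\lambda_1^n\gtrsim(\log Q_n)^{-1}$ (from $\I_1(E_n)\gtrsim\diam(E_n)^{-1}$) into \eqref{estimlambdageom} and the volume constraint, while you use $\lambda_1^n\gtrsim V_n^{1/N}$ together with \eqref{estimlambdaalpha}; both suffice, the arithmetic works out, and the two small imprecisions ($V_n=\prod_i\lambda_i^n$ should be $V_n\sim\prod_i\lambda_i^n$ by Lemma~\ref{john}, and $L$ is $\lim\diam E_n$ rather than $\lim\lambda_1^n$) are harmless.
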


\begin{proof}
We argue as in the proof of Proposition \ref{procomp}. Since the case $N=2$ is easier, we focus on $N=3$.
Let $\mathcal{R}_n=\prod_{i=1}^3[0,\lambda_{i,n}]$ be given by Lemma \ref{john} and let us assume without loss of generality that  $i\mapsto\lambda_{i,n}$ is decreasing. 
Then \eqref{estimlambdageom} applied with $V=Q_n^{-4} (\log Q_n)^{-2}$, 
directly yields an upper bound on $\lambda_{1,n}$ (and thus on $\diam (E_n)$). 

We now show that the diameter of $E_n$ is also uniformly bounded from 
below. Unfortunately,   \eqref{estimlambdaalpha} does not give the right bound and we  need to refine it using \eqref{estcap}.  As in Proposition \ref{procomp}, the energy bound 
$\I_1(E_n)\les  \log Q_n$, directly implies that 
\[
\lambda_{1,n}\ges\frac{1}{\log Q_n}\,,
\]
from which, using \eqref{estimlambdageom} and $\prod_{i=1}^3 \lambda_{i,n}\sim Q_n^{-4} (\log Q_n)^{-2}$, we get
\[
\lambda_{2,n}\les Q_n^{-2}\,.
\]
In particular, it follows that 
$$
\frac{\lambda_{2,n}}{\lambda_{1,n}}\les \frac{\log Q_n}{Q_n^2}\,.
$$
By Proposition \ref{proeps}, letting $\eps_n := Q_n^{-2} \log Q_n$ we get
\begin{eqnarray*}
\lambda_{1,n} \log Q_n &\ges& \lambda_{1,n}\I_1(E_n)
\sim \lambda_{1,n} \I_1\left( \mathcal{R}_n\right) 
\\
&=& \I_1\left( \prod_{i=1}^3 \left[0,
\frac{\lambda_{i,n}}{\lambda_{1,n}}\right]\right) \ges 
\I_1\left( C_{\eps_n}\right)
\\
&\sim& |\log \eps_n| \sim \log Q_n\,,
\end{eqnarray*}
which implies
$$
\lambda_{1,n}\ges 1\,,
$$
and gives a lower bound on the diameter of $E_n$.

Arguing as in the proof of \eqref{estimlambdaalpha}, we  then get
\begin{equation}\label{stimalambdabis}
\lambda_{3,n}\leq \lambda_{2,n} \les
Q_n^{-2}(\log Q_n)^{-1}\,.
\end{equation}

It follows that the sets $E_n$ are compact in 
the Hausdorff topology, and any limit set is a segment 
of length $L\in (0,+\infty)$.
\end{proof}

Arguing as in Theorem \ref{gammaalpha}, we obtain the following result.

\begin{theorem}\label{gamma1}
The functionals $\wFQ$, $\Gamma$--converge 
in the Hausdorff topology, to the functional
 \[ \widehat{\mathcal{F}}_1(E):=\begin{cases}
                            C_N \, L^{\frac{1}{N-1}}
+ \dfrac{4}{L} 
& \textrm{if } E\simeq[0,L]\times\{0\}^{N-1} 
\\
                            +\infty & \textrm{otherwise,}
                           \end{cases}
\]
where $C_N$ is defined as in Theorem \ref{gammaalpha}.
\end{theorem}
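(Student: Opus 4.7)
The plan is to follow the blueprint of the proof of Theorem \ref{gammaalpha}, with the Riesz-potential computations replaced by the cylindrical capacity estimates of Proposition \ref{proeps} (for $N=3$) and \eqref{estcap2d} (for $N=2$). Thanks to Proposition \ref{procompuno}, the $\Gamma$-limit is automatically $+\infty$ on sets that are not segments, so I only need to treat recovery sequences and lower bounds for sequences $E_n\in\A_{n,1}$ converging to $[0,L]\times\{0\}^{N-1}$ in the Hausdorff topology.

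For the $\Gamma$-limsup, I would use the same double-cone recovery sequence as in Theorem \ref{gammaalpha}, symmetric with respect to $\{0\}\times\R^{N-1}$ and with radius profile $r(t):=(NV_n/(\omega_{N-1}L))^{1/(N-1)}(1-2|t|/L)$, where $V_n=Q_n^{-2(N-1)}(\log Q_n)^{-(N-1)}$. The perimeter calculation is unchanged and gives $Q_n^{2(N-2)}(\log Q_n)^{N-2}P(E_n)\to C_N L^{1/(N-1)}$. To upper bound $\I_1(E_n)$, fix $\delta>0$, set $\rho_n:=r_n^{\max}\cdot(2\delta/L)\sim \delta Q_n^{-2}(\log Q_n)^{-1}$, and test $\I_1(E_n)$ against the uniform probability measure on the inner tube $[-L/2+\delta,L/2-\delta]\times B_{\rho_n}^{N-1}\subset E_n$. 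The computation is, up to rescaling, exactly the upper-bound half of the proof of Proposition \ref{proeps} (respectively of \eqref{estcap2d}), and yields
\[
\I_1(E_n)\le \frac{2|\log\rho_n|}{L-2\delta}(1+o(1)).
\]
Dividing by $\log Q_n$, using $|\log\rho_n|=2\log Q_n+o(\log Q_n)$, and then letting $\delta\to 0$ gives $\limsup_n \I_1(E_n)/\log Q_n\le 4/L$, as required.

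For the $\Gamma$-liminf, consider any sequence $E_n\in\A_{n,1}$ with $E_n\to [0,L]\times\{0\}^{N-1}$ and $\sup_n\wFQ(E_n)<+\infty$. The perimeter lower bound $\liminf_n Q_n^{2(N-2)}(\log Q_n)^{N-2}P(E_n)\ge C_N L^{1/(N-1)}$ is obtained verbatim from the Brunn/Schwarz-symmetrization and Coarea-plus-Lemma \ref{concavprob} argument used in Theorem \ref{gammaalpha}. For the non-local term, up to a rotation and translation I can place the longest John side of $E_n$ along the first coordinate axis, so that $E_n\subset [0,L_n]\times B_{d_n}^{N-1}$, where $L_n:=\diam(E_n)\to L$ and $d_n$ is the maximal transversal half-width. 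Monotonicity of $\I_1$ under inclusion plus scaling give
\[
\I_1(E_n)\ge \I_1([0,L_n]\times B_{d_n}^{N-1})=L_n^{-1}\I_1([0,1]\times B_{d_n/L_n}^{N-1}),
\]
and since $d_n/L_n\to 0$, Proposition \ref{proeps} (or \eqref{estcap2d} for $N=2$) gives $\I_1([0,1]\times B_{d_n/L_n}^{N-1})=2|\log(d_n/L_n)|(1+o(1))$. The John-type bounds already established in the proof of Proposition \ref{procompuno} yield $d_n\les \lambda_{2,n}\les Q_n^{-2}(\log Q_n)^{-1}$, so $|\log(d_n/L_n)|\ge 2\log Q_n+o(\log Q_n)$, and therefore $\I_1(E_n)/\log Q_n\ge 4/L+o(1)$.

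The only non-routine point is the non-local liminf, where one must match the polynomial decay $d_n\les Q_n^{-2}(\log Q_n)^{-1}$ with the logarithmic rate $\I_1([0,1]\times B_\eps)\sim 2|\log\eps|$. This is exactly the balance that was built into the definition of $\A_{n,1}$ and that already underlies the compactness statement of Proposition \ref{procompuno}, so the argument goes through without new ingredients.
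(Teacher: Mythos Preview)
Your proposal is correct and follows essentially the same route as the paper: the same double-cone recovery sequence combined with the cylinder asymptotics of Proposition \ref{proeps}/\eqref{estcap2d} for the limsup, and for the liminf an enclosure $E_n\subset[0,L_n]\times B_{d_n}^{N-1}$ with $d_n\les Q_n^{-2}(\log Q_n)^{-1}$, so that $|\log d_n|\ge 2\log Q_n(1+o(1))$. The only cosmetic difference is that you obtain this transversal bound by invoking John's lemma and the estimate \eqref{stimalambdabis} from Proposition \ref{procompuno}, whereas the paper re-derives it via an ad hoc tetrahedron-volume argument; both give exactly what is needed.
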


\begin{proof}
 Since the case $N=2$ is easier, we focus on $N=3$. The compactness and lower bound for the perimeter are obtained  exactly 
as in Theorem \ref{gammaalpha}. For the upper bound, 
for $L>0$ and $n\in \N$, we define $E_n$ as in the proof of Theorem \ref{gammaalpha}, by first letting $V_n:=Q_n^{-4}(\log Q_n)^{-2}$ (recall that $N=3$) and then  for $t\in[0,L/2]$, $r(t):=\lt(\frac{3 V_n}{\pi L}\rt)^{1/2}\lt(1-\frac{2t}{L}\rt)$ and 
\[
E_n\cap \lt(\R^+\times \R^{2}\rt):=\bigcup_{t\in[0,\frac L2]}\{t\}\times B_{r(t)}^{2} 
\]
where $B_{r(t)}^{2}$ is the ball of radius $r(t)$ in $\R^{2}$. 

As in the proof of Theorem \ref{gammaalpha}, we have
\[
\lim_{n\to +\infty} Q_n^{2} \log Q_n \,P(E_n)= C_3 \, L^{\frac{1}{2}}.
\]
 Let $\mu_n$ be the optimal measure for $\mathcal{I}_1(E_n)$, and let $\eps_n:=\lt(\frac{3 V_n}{\pi L}\rt)^{1/2}$. For  $L>\delta>0$, $[-\frac{L-\delta}{2},\frac{L-\delta}{2}]\times B_{\eps_n}^2\subset E_n$  so that by \eqref{estcapL},
 \[\mathcal{I}_1(E_n)\le \mathcal{I}_1\lt(\lt[-\frac{L-\delta}{2},\frac{L-\delta}{2}\rt]\times B_{\eps_n}^2\rt)=\frac{|\log V_n|}{(L-\delta)} +o(|\log V_n|). 
 \]
  Recalling that $|\log V_n|= 4 |\log Q_n| +o(|\log Q_n|)$, we then get
 \[\varlimsup_{n\to +\infty} \frac{\mathcal{I}_1(E_n)}{\log(Q_n)}\le \frac{4}{L-\delta}.\]
Letting $\delta\to 0^+$, we obtain the upper bound.\\
 
We are left to prove the lower bound for the non-local part of the energy. Let  $E_n$ be   be a sequence of convex sets such that $E_n\to [0,L]\times\{0\}^2$ and such that $|E_n|=Q_n^{-4} (\log Q_n)^{-2}$. We can assume that $\sup_n \wFQ(E_n)<+\infty$, since otherwise there is nothing to prove. 
Let $\delta>0$. Up to a rotation and a translation, we can assume that $[0,L-\delta]\times\{0\}^2\subset E_n \subset [0,L+\delta] \times\R^2$ for $n$ large enough. Let now $x^1=(x^1_1,x^1_2,x^1_3)$ be such that 
\[|(x^1_2,x^1_3)|=\max_{x\in E_n} |(x_2,x_3)|.\]
Up to a rotation of axis $\R \times \{0\}^2$, we can assume that $x^1=(a,\ell^n_1,0)$ for some $\ell^n_1\ge 0$. Let finally $x^2$ be such that  
\[|x^2\cdot e_3|=\max_{x\in E_n} |x\cdot e_3|\]
so that $x^2=(b,c,\ell^n_2)$ with $\ell^n_2\le \ell^n_1$. Since by definition $E_n\subset [0,L+\delta]\times [-\ell^n_1,\ell^n_1]\times[-\ell^n_2,\ell^n_2]$, we have $Q_n^{-4}(\log Q_n)^{-2}=|E_n|\les  \ell^n_1 \ell^n_2 (L+\delta)$. 
On the other hand, by convexity, the tetrahedron $T$ with vertices $0$, $x_1$, $x_2$ and $(L-\delta, 0, 0)$ is contained in $E_n$. We thus have $|E_n|\ge |T|$. Since
\[|T|=\frac{1}{8} |\det (x^1,x^2,(L-\delta, 0, 0))|=\frac{1}{8} (L-\delta)\ell^n_1 \ell^n_2,\]
we also have $Q_n^{-4}(\log Q_n)^{-2}\ges  \ell^n_1 \ell^n_2 (L-\delta)$. Arguing as in the proof of \eqref{estimlambdaalpha}, we get from the energy bound, $(L-\delta) \ell_1^n\les Q_n^{-2} (\log Q_n)^{-1}$, and thus 
\[  \ell^n_1 \ell^n_2\ges \frac{1}{ (L-\delta) Q_n^4 (\log Q_n)^2}.\]
From this we get $\ell^n_1\sim\ell^n_2\sim Q_n^{-2} (\log Q_n)^{-1}$, where the constants involved might depend on $L$. We therefore have $E_n\subset  [0,L+\delta]\times B_{C Q_n^{-2} (\log Q_n)^{-1}}$ for $C$ large enough. From this we infer that
\begin{align*}
 \liminf _{n\to +\infty}\frac{\I_1(E_n)}{\log Q_n}&\ge\liminf _{n\to +\infty} \frac{\I_1([0,L+\delta]\times B_{C Q_n^{-2} (\log Q_n)^{-1}})}{\log Q_n} \\
 &\ge 2  \liminf _{n\to +\infty} \frac{\I_1([0,L+\delta]\times B_{C Q_n^{-2} (\log Q_n)^{-1}})}{\log (C Q_n^{-2} (\log Q_n)^{-1})}\\
 &\ge 4(L+\delta)^{-1},
\end{align*}
where the last inequality follows from \eqref{estcapL}.
Letting $\delta\to 0$, we conclude the proof.
\end{proof}

\begin{remark}
 As before, optimizing $ \widehat{\mathcal{F}}_1$ with respect to $L$,
 one easily obtains the values of $L_{N,1}$ given in Theorem \ref{teoconv}.
\end{remark}

\begin{remark}
 By analogy with results obtained in  the setting of minimal Riesz energy point configurations \cite{hardinsaff,saffmartinez}, we believe that for every $N\ge 2$, $\alpha>1$ and $L>0$, 
 \eqref{estcapL} can be generalized to 
 \begin{equation}\label{estcapgene}\lim_{\eps \to 0} \frac{\I_\alpha([0,L]\times[0,\eps]^{N-1})}{\eps^{1-\alpha}} =\frac{C_\alpha}{L^{\alpha}},\end{equation}
 for some constant $C_\alpha$ depending only on $\alpha$. This result would  permit one to extend Theorem \ref{gamma1} beyond $\alpha=1$. 
 Let us point out that showing that the right-hand side of \eqref{estcapgene} is bigger than the left-hand side can be easily obtained by plugging in the uniform measure as  a test measure. 
 However, we are not able to prove the reverse inequality.  
\end{remark}


\bibliographystyle{plain}

\end{document}